\newtheorem{theorem}{Theorem}[section]
\newtheorem{lemma}[theorem]{Lemma}
\newtheorem{remark}[theorem]{Remark}
\newtheorem{definition}[theorem]{Definition}
\newtheorem{example}{Example}
\newtheorem{corollary}[theorem]{Corollary}
\newtheorem{assumption}{Assumption}
\numberwithin{equation}{section}
\begin{document}

\title{SDES WITH RANDOM AND IRREGULAR COEFFICIENTS}
\titlemark{SDEs WITH RANDOM COEFFICIENTS}

\emsauthor{1}{Guohuan Zhao}{G.~Zhao}


\emsaffil{1}{Faculty of Mathematics, Bielefeld University, 33615 Bielefeld, Germany\email{zhaoguohuan@gmail.com}}

\classification{60H07, 60H10, 60H15}

\keywords{Backward SPDEs, Malliavin Calculus, Schauder estimate, Singular SDEs}

\begin{abstract}
We consider It\^o uniformly nondegenerate equations with random coefficients. 
When the coefficients satisfy some low regularity assumptions with respect to the spatial variables and Malliavin differentiability assumptions on the sample points, the unique solvability of singular SDEs is proved by solving backward stochastic Kolmogorov equations and utilizing a modified Zvonkin type transformation. 
\end{abstract}

\maketitle


\section{Introduction}
The main purpose of this work is  to study  the well-posedness of stochastic differential equations (SDEs) with  random and irregular coefficients.  More precisely, we are concerned with the following SDE in $\mathbb{R}^n$: 
\begin{equation}\label{Eq-SDE}
X_t(\omega)=X_0(\omega)+ \int_0^t \sigma_s(X_s,\omega)\mathrm{d}  W_s(\omega)+\int_0^t b_s(X_s,\omega)\mathrm{d} s. 
\end{equation}
Here $\{ W_t\}_{t\in[0,1]}$ is a $d$-dimensional Brownian motion defined on a complete filtered probability space $(\Omega, {\mathscr F}, {\mathscr F}_t, {\mathbf P})$, ${\mathscr F}$ and ${\mathscr F}_t$ are generalized by $\{W_s\}_{s\in [0,1]}$ and $\{W_s\}_{s\in [0,t]}$, respectively. ${\mathscr B}$ is the Borel algebra on $\mathbb{R}^n$ and ${\mathscr P}$ is the collection of all the progressively measurable sets on $[0,1]\times \Omega$. The coefficients $\sigma: \mathbb{R}^n\times [0,1]\times \Omega \to \mathbb{R}^n\otimes \mathbb{R}^d$ and $b: \mathbb{R}^n\times [0,1]\times \Omega \to \mathbb{R}^n$ are ${\mathscr B}\times {\mathscr P}$-measurable.

In the past half century, a great deal of mathematical effort in stochastic analysis has been devoted to the study of the existence, uniqueness and regularity properties of strong solutions to It\^o uniformly nondegenerate stochastic equations with {\em deterministic} and irregular drifts. When $\nabla\sigma\in L^{2d}_{loc} $ and $b$ is bounded, Veretennikov \cite{veretennikov1980strong2} proved the strong existence and uniqueness of solutions to SDE \eqref{Eq-SDE} by developing the original idea proposed by Zvonkin in \cite{zvonkin1974transformation}. In the case that $\sigma={\mathbb I}$ and $b\in L^q_tL^p_x$ with $\frac{n}{p}+\frac{2}{q}<1$, using Girsanov's transformation and $L^q_tL^p_x$-estimate for parabolic equations, Krylov-R\"ockner \cite{krylov2005strong} obtained the existence and uniqueness of strong solutions to \eqref{Eq-SDE}. After that, a lot of works appeared to investigate  properties of the strong solution to \eqref{Eq-SDE} with singular drifts. Among all, we mention that the H\"older continuity of the stochastic flow was proved by Fedrizzi and Flandoli in \cite{fedrizzi2011pathwise}, provided that the coefficients meet the same condition in \cite{krylov2005strong}. When $b$ is bounded, Menoukeu et al.  \cite{menoukeu2013variational} obtained the weak differentiability of the stochastic flow and the Malliavin differentiability of $X_t$ with respect to the sample $\omega$ by using Malliavin's calculus.  Zhang  \cite{zhang2011stochastic} extended 
Veretennikov's unique strong solvability result to the case that $\nabla\sigma, b \in L^q_tL^p_x$ with $\frac{n}{p}+\frac{2}{q}<1$.  Under similar conditions, the regularities of strong solutions with respect to the initial data and sample point were also shown in \cite{ zhang2016stochastic} and \cite{xia2020lqlp}. For more recent results, we refer the reader to \cite{krylov2020strong} and \cite{rockner2021critical}. We also note that martingale problems and stochastic Lagrangian flows corresponding to \eqref{Eq-SDE} were studied by many researchers, among which we quote \cite{stroock1969diffusion, zhang2017heat, beck2019stochastic, zhang2020stochastic, zhao2019stochastic}.  

The well-posedness and regularities of strong solutions to SDEs with singular coefficients is not only a fundamental theoretical problem, but also has a wide range of applications in many mathematical and physical problems. For instance, in the remarkable paper \cite{flandoli2010well}, Flandoli, Gubinelli and Priola  studied the following linear stochastic transport equation  (see also \cite{flandoli2011random}): 
\begin{equation}\label{Eq-STE}
\partial_t u+ b\cdot\nabla u + \nabla u\circ \frac{\mathrm{d} W_t}{\mathrm{d} t}=0, \quad u_0=\varphi,  
\end{equation}
where $b: \mathbb{R}^n\times [0,1]\to \mathbb{R}^n$ is {\em deterministic}. Using the stochastic flow of the corresponding SDE (or stochastic characteristics), they proved the existence and uniqueness for the above equation in $L^\infty$-setting, provided that the drift $b$ is $\alpha$-H\"older continuous uniform in $t$ and the divergence of $b$ satisfies some integrability conditions. However, as mentioned in \cite{flandoli2010well},  one of the major obstacles to extending the regularization by noise phenomenon to the case where $b$ is random is the fact that even when $b$ is H\"older continuous in $x$ the stochastic characteristics corresponding to \eqref{Eq-STE} may not uniquely exist. Below is a simple but typical example:  
\begin{example}
Let $d=n=1$. Assume $\sigma=1$ and 
$$
b_t(x)=\sqrt{|x- W_t|}\wedge 1, \quad X_0=0.  
$$
Denote $Y_t:=X_t- W_t$, then $Y_t$ satisfies the following random ODE: 
$$
\mathrm{d} Y_t(\omega)= b_t(Y_t(\omega)+ W_t(\omega), \omega) \mathrm{d} t= \left(\sqrt{|Y_t(\omega)|} \wedge1\right )  \mathrm{d} t, \quad Y_0=0. 
$$
One can verify that $y^{(1)}_t\equiv 0$ and $y^{(2)}_t= \frac{t^2}{4}$ are two solutions of the above ODE, which implies $X_t^{(1)}= W_t$ and $X^{(2)}_t=\frac{t^2}{4}+ W_t$ are two ${\mathscr F}_t$-adapted solutions to equation 
$$
X_t=\int_0^t b_s(X_s) \mathrm{d} s+ W_t, \quad t\in [0,1]. 
$$
\end{example}

The above example demonstrates that  the non degeneracy of the noise and the uniformly H\"older continuity of $b_t(\cdot,\omega)$ are insufficient to guarantee the well-posedness of \eqref{Eq-SDE}. To the best of our knowledge, there is little literature was written to address this issue so far. The main work before this paper is \cite{duboscq2016stochastic}, which is a foundational work but misses some important requirements, in particular because it asks a specific form of Malliavin derivative for the drift, and in certain situations, $W^{1,p}$ regularity for drift with respect to $x$, which makes the results not so competitive. This paper attempts to make some progress in this direction. Roughly speaking, our main result, Theorem \ref{Th-SDE-Main}, shows that if the noise is additive and nondegenerate, and $b$ is H\"older in $x$, the well-posedness of the It\^o equation \eqref{Eq-SDE} is guaranteed  when $Db_t(x)$, the Malliavin differentiable of $b_t(x)$, also satisfies a H\"older continuity  assumption with respect to $x$. 

\medskip
With a little abuse of notation, in this paper $L^p(\Omega, {\mathscr F}, {\mathbf P}; \mathbb{R}^m)$ is abbreviated as $L^p(\Omega)$, where $m$ is an integer that may take different values in different places. Our main result is 
\begin{theorem}\label{Th-SDE-Main}
Let $\alpha\in (0,1)$, $p>n/\alpha$, $\Lambda> 1$, $\varDelta: =\left\{(s,t)\in [0,1]^2: 0\leqslant s\leqslant  t\leqslant 1\right\}$ and let $D$ be the Malliavin derivative operator. Assume that $\sigma$ and $b$  are  ${\mathscr B}\times {\mathscr P}$ measurable, then equation \eqref{Eq-SDE} admits a unique  solution if $\sigma$ and  $b$  satisfy the following assumptions: 
\begin{enumerate}[(i)]
\item 
for almost surely $\omega\in \Omega$, $\sigma(\omega)$ and $b(\omega)$ are bounded, and for all $x,y\in{\mathbb R}^n, t\in [0,1]$, 
\begin{align*}
 | b_t(x, \omega) -b_t(y, \omega) |  \leqslant \Lambda|x-y|^\alpha, \quad   |\sigma_t(x,\omega)-\sigma_t(y,\omega)| \leqslant \Lambda |x-y|;  
\end{align*}
\item for almost surely $\omega\in \Omega$ and all $(x,t)\in {\mathbb R}^n\times [0,1]$, 
\begin{align*}
\Lambda^{-1} |\xi|^2\leqslant \tfrac12 \sigma^{ik}_t\sigma^{jk}_t(x,\omega)\xi_i\xi_j\leqslant \Lambda|\xi|^2,\quad \forall \xi\in {\mathbb R}^d; 
\end{align*} 
\item for each $(x,t)\in {\mathbb R}^n\times[0,1]$, $\sigma_t(x), b_t(x)$ are Malliavin differentiable and the random fields $ D_s\sigma_t(x)$ and  $D_sb_t(x)$ have continuous versions as maps from ${\mathbb R}^n\times \varDelta$ to $L^{2p}(\Omega)$ such that 
\begin{align}\label{Eq-coe-Db}
\sup_{(s,t)\in\varDelta }  \left(\|D_s\sigma_t\|_{ C^\alpha({\mathbb R}^n;L^{2p}(\Omega))}+\|D_sb_t\|_{ C^\alpha({\mathbb R}^n;L^{2p}(\Omega))}  \right ) \leqslant \Lambda. 
\end{align}
\end{enumerate}

\end{theorem}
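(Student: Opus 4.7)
The plan is to adapt Zvonkin's transformation to the random-coefficient setting by means of a backward stochastic Kolmogorov equation (BSPDE). For a parameter $\lambda>0$ to be tuned large, I would seek a pair $(u,\eta)$ of random fields satisfying
\begin{align*}
-\mathrm{d}u_t(x) &= \Bigl[\tfrac12(\sigma_t\sigma_t^\top)^{ij}(x)\partial_i\partial_j u_t(x) + b_t(x)\cdot\nabla u_t(x) - \lambda u_t(x) \\
&\qquad{}+ b_t(x) + \sigma_t^{ik}(x)\,\partial_i\eta_t^k(x)\Bigr]\mathrm{d}t - \eta_t(x)\,\mathrm{d}W_t,\qquad u_1\equiv 0,
\end{align*}
with the target regularity $u\in C^{2+\alpha}_x$ (so $\nabla u\in C^{1+\alpha}_x$, in particular Lipschitz) uniform in $(t,\omega)$, and with $\|\nabla u\|_\infty<1/2$ achievable by taking $\lambda$ large. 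The extra cross term $\sigma^{ik}\partial_i\eta^k$ built into the generator is precisely what the It\^o--Wentzell formula needs in order to convert $Y_t:=\Phi_t(X_t):=X_t+u_t(X_t)$ into
\[
\mathrm{d}Y_t = \lambda\,u_t\!\circ\!\Phi_t^{-1}(Y_t)\,\mathrm{d}t + \bigl((I+\nabla u_t)\sigma_t+\eta_t\bigr)\!\circ\!\Phi_t^{-1}(Y_t)\,\mathrm{d}W_t,
\]
whose coefficients are Lipschitz in $y$ from the $C^{2+\alpha}_x$ bound on $u$ and matching regularity of $\eta$.

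\textbf{The main obstacle: solving the BSPDE with Schauder regularity.} The technical heart of the argument is to construct $(u,\eta)$ with the advertised H\"older regularity uniformly in $\omega$. Standard $L^p$-theory of linear BSPDEs gives existence in Sobolev classes, but the martingale integrand $\eta$ is \emph{a priori} too rough to close a Schauder fixed-point argument in $C^{2+\alpha}_x$. The device to circumvent this is Malliavin calculus: formally differentiating the BSPDE in the direction of $W_s$, the field $D_s u_t(x)$ satisfies an analogous linear BSPDE with inhomogeneity built from $D_sb_t$ and $D_s\sigma_t$, and the Clark--Ocone formula identifies $\eta_t(x)=\mathbf{E}[D_tu_t(x)\mid{\mathscr F}_t]$. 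Hypothesis~(iii) then supplies uniform control of $D_\cdot b$ and $D_\cdot\sigma$ in $C^\alpha_x\!\to\!L^{2p}(\Omega)$, and because $p>n/\alpha$, Morrey embedding on $\mathbb R^n$ promotes an $L^{2p}(\Omega;C^\alpha_x)$ bound on $D_\cdot u$ to pathwise $C^{2+\alpha}_x$ regularity of $u$ and the corresponding bound on $\eta$. This is where both clauses of assumption~(iii) (Malliavin differentiability \emph{and} spatial H\"older regularity in $L^{2p}(\Omega)$) are used in an essential way.

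\textbf{Zvonkin reduction and conclusion.} With $\|\nabla u\|_\infty<1/2$, the map $\Phi_t(\cdot,\omega)$ is a random $C^{2+\alpha}$-diffeomorphism of $\mathbb R^n$ with Lipschitz inverse. The transformed SDE above has Lipschitz (random) coefficients, so strong existence and pathwise uniqueness for $Y$ is classical. Pushing back through $X_t=\Phi_t^{-1}(Y_t)$ produces a solution to \eqref{Eq-SDE}; conversely, any solution $\widetilde X$ of \eqref{Eq-SDE} yields $\widetilde Y_t:=\Phi_t(\widetilde X_t)$ solving the transformed equation by the same It\^o--Wentzell computation, so uniqueness for $Y$ transfers to uniqueness for $X$. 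The only genuinely delicate step is the middle one: preserving Schauder regularity of the BSPDE while keeping the coupling between $u$ and $\eta$ under control, which is exactly what the Malliavin hypothesis is designed to furnish.
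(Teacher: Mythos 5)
Your overall strategy (Zvonkin transform via a backward stochastic Kolmogorov equation, Malliavin calculus to unlock $\eta$-regularity) is in the spirit of the paper, but the specific form of the BSPDE you propose creates a gap the paper is designed to avoid.

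You put the cross term $\sigma^{ik}\partial_i\eta^k$ \emph{inside} the generator. This makes the BSPDE fully coupled in $(u,\eta)$, and with $a=\tfrac12\sigma\sigma^{\top}$ the superparabolicity (coercivity) condition for such coupled BSPDEs degenerates: $a^{ij}\xi_i\xi_j-\tfrac12\sigma^{ik}\sigma^{jk}\xi_i\xi_j\equiv 0$. In the degenerate case, the available Schauder theory (Tang--Wei) gives $\eta\in C^\alpha_x$ only, not $C^{1+\alpha}_x$, so the transformed diffusion $\bigl((I+\nabla u)\sigma+\eta\bigr)\circ\Phi^{-1}$ is not Lipschitz and the It\^o--Wentzell formula (which requires $\eta\in C^1_x$) does not apply. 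Moreover, the Malliavin/Clark--Ocone bootstrap you invoke does not extend cleanly to your coupled equation: the identification $\eta_t(x)=\mathbf{E}[D_tw_t(x)\mid\mathscr F_t]$ relies on $w$ solving a \emph{closed, non-adapted} random PDE in which $\eta$ does not appear; inserting the cross term into the generator destroys that closed structure (differentiating your BSPDE in the direction of $W$ produces derivatives of $\eta$ and hence a new unknown).

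The paper takes the opposite route precisely to sidestep this. It keeps the BSPDE \emph{decoupled} (no cross term, no $\lambda u$; it uses a short time horizon $T$ and induction instead), accepts that the transformed SDE then has drift $\partial_iv^k\sigma^{ik}\circ\phi^{-1}$, and instead invests in proving $v\in C^{2+\alpha}_x$. That extra two orders of regularity on $v$ is exactly the novelty (Theorem~3.5 / Th-BSPDE-Holder): one shows the non-adapted solution $w$ is Malliavin differentiable, that $D_sw$ solves a parametrized family of closed non-adapted PDEs with $C^\alpha$-data (this is where hypothesis~(iii) enters), hence $w^s:=D_sw\in C^{2+\alpha}_x$ by the Banach-valued Schauder estimate, and finally $v_s(x)=\mathbf E^s w_s^s(x)$ by the Clark--Ocone formula. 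Your ``Morrey embedding promotes an $L^{2p}(\Omega;C^\alpha_x)$ bound on $D_\cdot u$ to pathwise $C^{2+\alpha}_x$ regularity of $u$'' misstates the mechanism: the $C^{2+\alpha}_x\to L^p(\Omega)$ regularity of $u$ and $v$ comes first from the random Schauder estimate, and the role of $p>n/\alpha$ is to convert those $L^p(\Omega)$-valued H\"older bounds into \emph{pathwise} $C^2$-local bounds via Garsia--Rodemich--Rumsey, which is what Lemma~5.7 (the It\^o--Wentzell formula) and the Krylov-type local Lipschitz argument actually require. In short: the regularity of the martingale integrand cannot be obtained by building the cross term into the generator; it must be bootstrapped through the Malliavin structure of the decoupled equation, and that is the step your proposal elides.
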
 

We give an example of $b$ meeting the conditions in Theorem \ref{Th-SDE-Main}. 

\begin{example}
Let $n=d=1$,  $\alpha\in (0,1)$, $p>1/\alpha$. Assume $\bar b:[0,1]\times {\mathbb R}^2\to {\mathbb R}$ is bounded function satisfying 
$$
 \left(\left| \bar b_t(x,y)-\bar b_t(x',y)\right|+\left| \partial_y \bar b_t(x,y)- \partial_y \bar b_t(x',y)\right|\right ) \leqslant C {|x-x'|^\alpha}, \quad \forall x,x',y\in {\mathbb R}^n,\ t\in [0,1] 
$$
and 
$$
b_t(x,\omega):= \bar b_t\left(x, \int_0^t h_r(\omega) \mathrm{d}  W_r(\omega)\right). 
$$
Here $h$ is an adapted process satisfying 
$$ \sup_{s\in [0,1]} {\mathbf E} \left(|h_s|^{2p} + \int_0^1 | D_s h_r |^{2p} \mathrm{d} r\right)<\infty. 
$$ 
Noting that  
$$
D_s b_t(x) =\partial_y\bar b_t \left(x, \int_0^t h_r \, \mathrm{d}  W_r\right) \left(\int_s^t D_s h_r \, \mathrm{d}  W_r+ h_s\right){\mathbf{1}}_{\varDelta}(s,t), 
$$ 
by Burkholder-Davis-Gundy's inequality, one sees that    
\begin{align*}
&\sup_{t\in[0,1];\omega\in \Omega}\|  b_t(\cdot,\omega) \|_{C^\alpha({\mathbb R})} + \sup_{(s,t)\in \varDelta}\|D_s b_t\|_{C^\alpha({\mathbb R}; L^{2p}(\Omega))}\\
\leqslant &   C\left[1+ \sup_{s\in [0,1]} {\mathbf E} \left( |h_s|^{2p} + \int_0^1 | D_s h_r |^{2p} \mathrm{d} r\right)^{1/2p}\right ]<\infty, 
\end{align*}
so $b$ satisfies the conditions in (i) and (iii) in Theorem \ref{Th-SDE-Main}. 
\end{example}

Our approach of studying the well-posedness of \eqref{Eq-SDE} is using a modified Zvonkin transformation. Such kind of trick was  first proposed in \cite{zvonkin1974transformation} for solving SDEs with deterministic and bounded coefficients. To explain our main idea, let us first give a brief introduction to Zvonkin's idea. Denote 
$$
a=\frac{1}{2}\sigma\sigma^*, \quad L_t u=a_t^{ij}\partial_{ij}u +b^i_t\partial_i u. 
$$
When $a$ and $b$ are deterministic, $a, b\in L^\infty_t C^\alpha_x$ and $a$ is uniformly elliptic, by Schauder's estimate, the following backward equation 
$$
\partial_t u + L_t u =- b, \quad u_T(x)=0
$$
admits a unique solution $u\in L^\infty_t C^{2+\alpha}_x$ with $\partial_t u\in L^\infty_tC^\alpha_x$. Moreover, if $T$ is sufficiently small, the map $x\mapsto \phi_t(x):=x+u_t(x)$ is a $C^2$-homeomorphism. Assuming that $X_t$ solves \eqref{Eq-SDE}, by It\^o's formula, $Y_t:= \phi_t(X_t)$ satisfies a new SDE with Lipschitz continuous coefficients. Thus, the strong uniqueness of the solution to the original equation is given by the one of the new equation. Coming to the case that $\sigma, b$ are progressive measurable and 
$$
\mathrm{ess\,sup}_{\omega\in \Omega}( \| \sigma(\omega)\|_{L^\infty_t C^\alpha_x}+\| b(\omega)\|_{L^\infty_t C^\alpha_x})<\infty.
$$ 
Thanks to the classic Schuader's estimate, pointwisely, one can solve the backward equation
\begin{equation}\label{Eq-pde}
\partial_t w+L_t w+f=0, \quad w_T(x)=0. 
\end{equation}
Moreover, $w$ satisfies 
$$
\mathrm{ess\,sup}_{\omega\in \Omega} \left( \|w(\omega)\|_{L^\infty_t C^{2+\alpha}_x} + \|\partial_t w(\omega)\|_{L^\infty_tC^\alpha_x}\right )\leqslant C\mathrm{ess\,sup}_{\omega\in \Omega}\|f(\omega)\|_{L^\infty_tC^\alpha_x}.
$$ 
However, in this case for each $x\in {\mathbb R}^d$, the process $w_\cdot(\cdot, x): (t,\omega)\mapsto w_t(x,\omega)$ is non-adapted, so one cannot apply the It\^o-Wentzell formula as in the deterministic case.  A very natural way to overcome this difficulty is to consider the function $u_t:={\mathbf E}(w_t|{\mathscr F}_t)$ instead of $w_t$. Formally, $u_t$ satisfies the following backward stochastic Kolmogorov equation  (see Lemma \ref{Le-BSPDE}): 
\begin{equation}\label{Eq-Bspde}
\mathrm{d} u_t+(L_t u_t +f_t)\mathrm{d} t=v_t\cdot\mathrm{d} W_t, \quad u_T(x)=0. 
\end{equation}
On this point, a more general class of semi-linear equations including \eqref{Eq-Bspde} were already studied by Du-Qiu-Tang \cite{du2012lp}  in $L^p$-spaces and also by Tang-Wei \cite{tang2016cauchy} in H\"older spaces. However, the main obstacle of applying their result for our purpose is that one can only expect that the vector field $v$ is in some $L^p$ (or $C^\alpha$) space, which is far from enough to apply the It\^o-Wentzell formula  (see Lemma \ref{Le-Ito}). On the other hand, Duboscq-R\'eveillac \cite{duboscq2016stochastic},  studied the stochastic regularization effects of diffusions with random drift coefficients on random functions. After adding some Malliavin differentiability conditions on $b$ and $f$,  they extended the boundedness of time average of a deterministic function $f$  depending on a diffusion process $X$ with deterministic drift coefficient $b$ to random mappings $f$ and $b$ by investigate the backward stochastic Kolmogorov equation \eqref{Eq-Bspde} ($a\equiv{\mathbb I}$) in some $L^p$-type space. Inspired by \cite{duboscq2016stochastic} and \cite{zvonkin1974transformation}, in this paper we prove a $C^{2+\alpha}$ type estimate (Theorem \ref{Th-BSPDE-Holder}) for $(u,v)$, provided that the coefficients satisfy some Malliavin differentiability conditions. To achieve this purpose, we first extend the classic Schauder estimate to random PDEs with Banach variables. Such kind of extension gives the $C^{2+\alpha}$ estimate for $u$, as well as $C^\alpha$ estimate for $v$ (see Lemma \ref{Le-BSPDE}). The main ingredient of this paper is Theorem \ref{Th-BSPDE-Holder}, where we give the $C^{2+\alpha}$ estimate for $v$, provided that the Malliavin derivatives of the coefficients satisfy \eqref{Eq-coe-Db}.  To us, such kind of result is new and intriguing. With such regularity estimate in hand, we then use a modified It\^o-Wentzell's formula and Zvonkin type transformation to prove the well-posedness of \eqref{Eq-SDE}. We believe our results have the potential to be applied to stochastic transport equations with random coefficients and some other nonlinear stochastic PDEs. 

\medskip 

This paper is organized as follows: 
In Section 2, we investigate a  random Banach-valued  non-adapted Kolmogorov equation and prove its well-posedness in some H\"older type spaces. In Section 3, we study the solvability of  backward stochastic Kolmogorov equation \eqref{Eq-Bspde} in some $C^{2+\alpha}$ space. Our main result was proved in Section 4. A It\^o-Wenzell type formula and some auxiliary lemmas used in our main proofs were presented in Appendix. 

\section{Schauder Estimates for  Random  Banach-valued PDEs}
In this section, we give a self-contain proof of Schauder type estimate for random  Banch-valued parabolic PDEs by using Littlewood-Paley decomposition. 

\medskip

Let $T\in (0,1]$, $D$ be a domain of ${\mathbb R}^n$, $D_T=  D\times [0,T]$ and ${\mathcal B}$ be a real Banach space. For any $\alpha\in (0,1)$ and  strongly continuous  function $g: D \to {\mathcal B}$,  we define 
$$
\|g\|_{0;D}:= \sup_{x\in D} |g(x)|_{{\mathcal B}},  \quad [g]_{\alpha; D}:= \sup_{x,y\in D} \frac{|g(x)-g(y)|_{{\mathcal B}}}{|x-y|^\alpha}. 
$$
For $k\in {\mathbb N}$, 
$$
\|g\|_{C^{k+\alpha}(D; {\mathcal B})}:= \sum_{i=0}^k \|\nabla^i g\|_{0;D}+ [\nabla^k g]_{\alpha;D}
$$
Here and below, all the derivatives of an ${\mathcal B}$-valued function are defined with respect to the spatial variable in the strong sense, namely, $\nabla g$ is the unique map from ${\mathbb R}^n$ to ${\mathcal L}({\mathbb R}^n; {\mathcal B})$ such that $\lim_{|h|\to 0} |g(x+h)-g(x)-\nabla g(x) \cdot h|_{{\mathcal B}}=0$.  For any $\beta\geqslant 0$, the space $C^{\beta, 0}_{x,t}(D_T; {\mathcal B})$ consists all continuous  function $f: D_T \to {\mathcal B}$ such that 
$$
\|f\|_{C^{\beta, 0}_{x,t}(D_T; {\mathcal B})}:=\sup_{t\in [0,T]}\|f(t)\|_{C^{\beta}(D; {\mathcal B})}<\infty. 
$$
Below we always denote $Q_T={\mathbb R}^n\times [0,T]$ and $Q=Q_1$. If there is no confusion on the time parameter $T$ and underlying Banach space ${\mathcal B}$ , we simply write $C^\beta$ and $C^\beta_{x, t}$ instead of $C^\beta({\mathbb R}^n; {\mathcal B})$ and $C^{\beta, 0}_{x, t}(Q_T; {\mathcal B})$, respectively.

\subsection{Littlewood-Paley decomposition}

Let ${\mathscr S}({\mathbb R}^n)$ be the Schwartz space of all rapidly decreasing complex valued functions on ${\mathbb R}^n$, and ${\mathscr S}'({\mathbb R}^n)$ be the dual space of ${\mathscr S}({\mathbb R}^n)$ (tempered distribution space). Given $f\in {\mathscr S}({\mathbb R}^n)$, the Fourier transform and inverse transform of $f$ are defined by
$$
{\mathscr F} (f)(\xi):=(2\pi)^{-n/2}\int_{{\mathbb R}^n}\mathrm{e}^{-\mathrm{i}\xi\cdot x} f(x)\mathrm{d} x, 
$$
$$
{\mathscr F}^{-1}f(\xi):=(2\pi)^{-n/2}\int_{{\mathbb R}^n}\mathrm{e}^{\mathrm{i}\xi\cdot x} f(x)\mathrm{d} x. 
$$
Let $\chi:{\mathbb R}^n\to[0,1]$ be a smooth radial function with 
$$
\chi(\xi)=1,\ |\xi|\leqslant 1; \ \chi(\xi)=0,\ |\xi|\geqslant 3/2.
$$
Define
$$
\varphi(\xi):=\chi(\xi)-\chi(2\xi), \quad \varphi_{-1}(\cdot): =\chi(2\cdot), \quad 
\varphi_j(\cdot):= \varphi(2^{-j}\cdot)\,(j=0,1,2,\cdots). 
$$
It is easy to see that $\varphi\geqslant 0$ and supp $\varphi\subset B_{3/2}\setminus B_{1/2}$ and formally 
\begin{align}\label{EE1}
\sum_{j=-1}^k\varphi_j(\xi)=\chi(2^{-k}\xi)\stackrel{k\uparrow\infty}{\longrightarrow} 1.
\end{align}
In particular, if $|j-j'|\geqslant 2$, then
$$
\mathrm{supp}\varphi(2^{-j}\cdot)\cap\mathrm{supp}\varphi(2^{-j'}\cdot)=\varnothing.
$$
Let $\widetilde\varphi$ be another  smooth radial function,  $\mathrm{supp}\widetilde \varphi\in  B_{\frac{7}{4}}\backslash B_{\frac{1}{4}}$ and $\widetilde \varphi (x)=1$ for all $x\in  B_{\frac{3}{2}}\backslash B_{\frac{1}{2}}$. Denote
$$
h_j:= {\mathscr F}^{-1}(\varphi_j),\quad  \widetilde h_j:= {\mathscr F}^{-1}(\widetilde\varphi_j). 
$$
For any $f\in L^1({\mathbb R}^n; {\mathcal B})+L^\infty({\mathbb R}^n; {\mathcal B})$, define 
$$
\Delta_j f:=\int_{{\mathbb R}^n} h_j(x-y)f(y) \mathrm{d} y, \quad \widetilde \Delta_j f:=\int_{{\mathbb R}^n} \widetilde h_j(x-y)f(y) \mathrm{d} y. 
$$

\subsection{A basic apriori estimate}
Assume $(\Omega, {\mathscr F}, {\mathbf P})$ is a complete probability space, ${\mathcal H}$ is a real Hilbert spaces and ${\mathcal B}= L^p(\Omega, {\mathscr F}, {\mathbf P}; {\mathcal H})$ for some $p\geqslant 2$. Let $a^{ij}, b^i, c$ be real-valued measurable functions on $Q \times \Omega$ and define 
$$
L_t : =a_t^{ij}\partial_{ij} + b^i_t\partial_i +c_t.
$$ 
Fix $T\in (0,1]$,  we first give the precise definition of solutions to the following ${\mathcal B}$-valued PDE 
\begin{equation}\label{Eq-RPDE}
\begin{cases}
&\partial_t w+L_t w+f=0 \ \textit{ in } \ Q_T^o \\
& w_T=0\ \textit{ on }\  {\mathbb R}^n. 
\end{cases}
\end{equation}

\begin{definition}
A fucntion $w: Q_T\to {\mathcal B}$ is called a solution of \eqref{Eq-RPDE} if
\begin{enumerate}
\item For each $t\in[0,T]$,  $w(t, \cdot)$ is a twice strongly differentiable function from ${\mathbb R}^n$ to ${\mathcal B}$; 
\item For each $x \in {\mathbb R}^n$, the process $w(\cdot,x)$ is absolutely continuous from $[0,T]$ to ${\mathcal B}$ satisfying 
$$
w_t(x)=\int_t^T \big(L_s  w_s+f_s\big)(x)\mathrm{d} s. 
$$
\end{enumerate}
\end{definition}
\medskip

In order to study the solvability of \eqref{Eq-RPDE}, we need the following  
\begin{assumption}\label{Aspt1}
The map 
$(x, t,\omega)\mapsto \big(a_t(x,\omega), b_t(x,\omega), c_t(x,\omega), f_t(x,\omega)\big)$ is ${\mathscr B}(Q)\times {\mathscr F}$ measurable and there are constants $\alpha\in (0,1)$ and $\Lambda>1$ 
such that  for almost surely $\omega\in \Omega$, 
\begin{align}\label{Eq-con1}
 \| a^{ij}(\omega) \|_{C^{\alpha,0}_{x,t}}+\|  b^i(\omega) \|_{C^{\alpha,0}_{x,t}}+\|c(\omega)\|_{C^{\alpha,0}_{x,t}} \leqslant \Lambda  \tag{\bf H$_1$}, 
\end{align}
and 
\begin{align}\label{Eq-con2}
\Lambda^{-1} |\xi|^2\leqslant (\omega)\xi_i\xi_j\leqslant \Lambda|\xi|^2.  \tag{\bf H$_2$}
\end{align} 
\end{assumption} 
Our main result in this section is 
\begin{theorem}\label{Th-RPDE}
 Under Assumption \ref{Aspt1}, for any $f\in C^\alpha_{x,t}$, equation \eqref{Eq-RPDE} admits a unique solution $w$ in $C^{2+\alpha}_{x,t}$. Moreover,  
\begin{equation}\label{RPDE-Schuader}
\|\partial_t w\|_{C^\alpha_{x,t}}+ \|w\|_{C^{2+\alpha}_{x,t}}+ T^{-1}\|w\|_{C^{0}_{x,t}}    \leqslant C \|f\|_{C^\alpha_{x,t}},
\end{equation}
where $C$ only depends on $n, p, \alpha, \Lambda$. 
\end{theorem}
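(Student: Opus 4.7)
The plan is to adapt the classical Littlewood--Paley proof of the parabolic Schauder estimate to the ${\mathcal B}$-valued setting, exploiting the key observation that multiplication by a scalar random function $g\in L^\infty(\Omega)$ acts as a bounded operator on ${\mathcal B}=L^p(\Omega;{\mathcal H})$ with operator norm $\|g\|_{L^\infty(\Omega)}$. Consequently the hypothesis $\|a(\omega)\|_{C^{\alpha,0}_{x,t}}\leqslant\Lambda$ a.s.\ translates into bounds of the form $\|a^{ij}_t(x)-a^{ij}_s(y)\|_{{\mathcal L}({\mathcal B})}\leqslant\Lambda(|x-y|^\alpha+|t-s|^\alpha)$, and similarly for $b,c$, so every scalar pointwise product estimate used in the classical proof can be reinterpreted in the Banach-valued setting with the same constants.

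First I would treat the constant-coefficient model $\partial_t w+a_0^{ij}\partial_{ij}w=-f$, $w_T=0$, where $a_0$ is a deterministic constant matrix satisfying $({\bf H}_2)$. Duhamel's formula gives $w_t=\int_t^T P^{a_0}_{s-t}f_s\,\mathrm{d} s$ with $P^{a_0}_\tau$ convolution with an anisotropic Gaussian kernel of $L^1$-norm one, hence a contraction on $L^\infty({\mathbb R}^n;{\mathcal B})$. Combining this with the Bernstein-type decay
$$
\|\Delta_j P^{a_0}_\tau g\|_{L^\infty({\mathcal B})}\leqslant C\mathrm{e}^{-c 2^{2j}\tau}\|\Delta_j g\|_{L^\infty({\mathcal B})},\qquad j\geqslant 0,
$$
whose proof is identical to the scalar one with $|\cdot|$ replaced by $|\cdot|_{{\mathcal B}}$, and with the Littlewood--Paley characterisation $\|g\|_{C^\beta}\simeq\sup_j 2^{j\beta}\|\Delta_j g\|_{L^\infty({\mathcal B})}$, I would obtain $\|w\|_{C^{2+\alpha}_{x,t}}+\|\partial_t w\|_{C^\alpha_{x,t}}\leqslant C\|f\|_{C^\alpha_{x,t}}$; the $T^{-1}\|w\|_{C^0_{x,t}}$ factor then comes from $|w_t(x)|_{{\mathcal B}}\leqslant(T-t)\|\partial_s w\|_{C^0_{x,t}}$.

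For the full equation I plan to freeze $a$ at the random ``constant-in-$x$'' matrix $a_0(\omega):=a_t(0,\omega)$, apply $\Delta_j$, and write
$$
(\partial_t+a_0^{ij}\partial_{ij})\Delta_j w=-\Delta_j f-\Delta_j\bigl[(a^{ij}_t-a^{ij}_0)\partial_{ij}w\bigr]-\Delta_j(b^i_t\partial_i w)-\Delta_j(c_t w).
$$
Step~1 applied pathwise (since $a_0(\omega)$ is still uniformly elliptic with constant $\Lambda$), together with the standard $C^\alpha$-paraproduct bound
$$
\bigl\|\Delta_j[(a^{ij}_t-a^{ij}_0)\partial_{ij}w]\bigr\|_{L^\infty({\mathcal B})}\leqslant C 2^{-j\alpha}\|a\|_{C^\alpha_x}\|w\|_{C^{2+\alpha}_x}+(\text{lower order}),
$$
and interpolation for the $b,c$ contributions, should close the estimate on a short time interval and absorb the perturbation into the left-hand side; iterating on a finite partition of $[0,T]$ then yields the global bound. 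Existence follows by the continuity method joining $\partial_t+\Delta$ to $\partial_t+L_t$ via ${\mathcal L}_s:=(1-s)(\partial_t+\Delta)+s(\partial_t+L_t)$, since the uniform a priori estimate makes the set of solvable $s$ both open and closed and solvability at $s=0$ is furnished by Step~1; uniqueness is immediate from the estimate applied to the difference of two solutions with $f\equiv 0$.

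The hard part will be the paraproduct/commutator estimate in the ${\mathcal B}$-valued setting: one must verify that the scalar-valued Bony-type product identities and bounds carry through when the factor $w$ is ${\mathcal B}$-valued, using only that the real coefficients act as pointwise multipliers on ${\mathcal B}$ with norm controlled by their $L^\infty(\Omega)$-essential supremum. Once this is secured, the classical Littlewood--Paley machinery transfers verbatim and the desired Schauder estimate follows with a constant depending only on $n,p,\alpha,\Lambda$.
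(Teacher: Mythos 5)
Your plan breaks down at the step ``Step 1 applied pathwise,'' and the breakdown is precisely at the point the paper must work hardest.

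In your Step~1 you build the Schauder estimate for the model problem with a \emph{deterministic} constant matrix $a_0$, where $P^{a_0}_\tau$ is a fixed, $\omega$-independent convolution operator and everything commutes trivially with the $L^p(\Omega;{\mathcal H})$-norm. But when you then freeze at $a_0(\omega):=a_t(0,\omega)$, the solution operator $P^{a_0(\omega)}_\tau$ becomes a random operator, and it is no longer a semigroup because $a_t(0,\omega)$ also depends on $t$ (so one needs the two-parameter evolution $P^a_{t,s}$ built from $A_{t,s}=\int_t^s a(r)\,\mathrm{d} r$, not $P_\tau$). You cannot resolve this by applying the deterministic estimate ``pathwise'': the hypothesis $f\in C^{\alpha,0}_{x,t}(Q_T;{\mathcal B})$ with ${\mathcal B}=L^p(\Omega;{\mathcal H})$ controls only $p$-th moments of increments, and does \emph{not} imply that $x\mapsto f_t(x,\omega)$ is H\"older for a.e.\ $\omega$. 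So the classical pathwise Schauder bound has no input to feed on. What is actually needed --- and what the paper's Lemma~\ref{Le-RPDE} supplies --- is a ${\mathcal B}$-valued estimate for the random, time-dependent, $x$-independent model operator $L_t=a^{ij}_t(\omega)\partial_{ij}$, obtained by putting Minkowski's integral inequality between the $L^p(\Omega)$-norm and the spatial convolution and then controlling $\mathrm{ess\,sup}_\omega\, |p^{a(\omega)}_{t,s}*\widetilde h_j|$ uniformly over all admissible elliptic matrices via Fourier analysis. This is the genuinely new ingredient compared with the scalar Littlewood--Paley Schauder proof, and your proposal skips it entirely; the remark that real-valued $a$ acts as a bounded pointwise multiplier on ${\mathcal B}$ does not help here, since $a$ enters the solution operator through the kernel, not as a pointwise factor.

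A second, smaller gap: you freeze $a$ at the fixed spatial point $0$ and invoke a paraproduct bound of the form $\|\Delta_j[(a-a_0)\partial_{ij}w]\|\lesssim 2^{-j\alpha}\|a\|_{C^\alpha}\|w\|_{C^{2+\alpha}}$ with the intent to absorb it by shrinking $T$. But without a spatial cutoff the perturbation $a_t(x)-a_t(0)$ is not small for large $|x|$, so the constant in front is of order $\Lambda$, not small, and iteration in $t$ alone cannot beat it. One has to localise in $x$ (the paper uses $\chi^z_\delta$ with $\delta$ small, plus interpolation), or localise the Bony decomposition, to make the leading perturbation small. These two omissions --- the pathwise misuse of the deterministic model problem, and the absence of spatial localisation --- are the gap between your outline and a correct proof.
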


\medskip 
Like the proof for the classic Schauder estimate, we first consider the case  $a_t(x,\omega)=a_t(\omega)$ and $b=c=0$. Define 
$$
A_{t,s}:=\int_t^s a(r) \mathrm{d} r,\quad p^a_{t,s}(x):= (\det 4\pi A_{t,s})^{-1/2} \exp(-\langle x, A_{t,s}^{-1} x\rangle )
$$
and 
$$
P^a_{t,s} f(x):=\int_{{\mathbb R}^n} p^a_{t,s}(x-y) f(y)\mathrm{d} y. 
$$

\begin{lemma}\label{Le-RPDE}
Let $T\in (0,1], \alpha\in (0,1)$. Assume $a$ is $x$-independent and satisfies \eqref{Eq-con2}. For any $f\in C^\alpha_{x,t}$, the function $w_t(x)= \int_t^T P^a_{t,s} f_{s}(x) \mathrm{d} s$ is the unique function in $C^{2+\alpha}_{x,t}$  satisfying  
\begin{equation}\label{PDE0}
 w_t =\int_t^T (a^{ij}_s\partial_{ij} w_s + f_s ) \mathrm{d} s . 
\end{equation}
Moreover,  there is a constant $C$ only depends on $n,\alpha, p, \Lambda$ such that  
\begin{equation}\label{Basic-Est1}
\|\partial_t w\|_{C^{\alpha}_{x,t}} +\|w\|_{C^{2+\alpha}_{x,t}}+ T^{-1}\|w\|_{C^{\alpha}_{x,t}} \leqslant C \|f\|_{C^{\alpha}_{x,t}}.  
\end{equation}
\end{lemma}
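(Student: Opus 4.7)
The plan is to solve \eqref{PDE0} explicitly by the Duhamel-type formula $w_t(x)=\int_t^T P^a_{t,s}f_s(x)\,\mathrm{d} s$ given in the statement, and to extract the Schauder-type bound \eqref{Basic-Est1} via Littlewood--Paley decomposition. Since $a$ depends only on $t$, $P^a_{t,s}$ is convolution against the scalar Gaussian kernel $p^a_{t,s}$ whose covariance is comparable to $(s-t)I$ by \eqref{Eq-con2}. Crucially, because $p^a_{t,s}$ is scalar it commutes with the $\mathcal B$-valued structure and Young's inequality applies in the strong sense, so the classical scalar heat-semigroup estimates transfer verbatim to the Banach-valued setting. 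Existence is then routine: the Gaussian derivative bound $|\partial_{ij}p^a_{t,s}(y)|\leqslant C(s-t)^{-(n+2)/2}\mathrm{e}^{-c|y|^2/(s-t)}$ legitimates two $x$-derivatives under the integral sign, and $\partial_t w$ is defined for a.e.\ $t$ from the identity $\partial_t w_t = -a_t^{ij}\partial_{ij}w_t - f_t$, consistent with \eqref{PDE0}.

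For the apriori estimate I would use the standard Littlewood--Paley characterisation $\|g\|_{C^{\beta}({\mathbb R}^n;\mathcal B)}\sim\sup_{j\geqslant-1}2^{j\beta}\|\Delta_j g\|_{0}$, valid in the Banach-valued setting because each $\Delta_j$ is a scalar convolution. Decomposing $\Delta_j\partial_{ij}w_t=\int_t^T\partial_{ij}P^a_{t,s}\widetilde\Delta_j\Delta_j f_s\,\mathrm{d} s$, the heart of the proof is the Bernstein-type estimate $\|\partial_{ij}P^a_{t,s}\Delta_j h\|_{0}\leqslant C\,4^j\mathrm{e}^{-c 4^j(s-t)}\|\Delta_j h\|_{0}$ for Fourier-localised $h$. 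Its Fourier multiplier $-\xi_i\xi_j\exp(-\tfrac12\langle\xi,A_{t,s}\xi\rangle)$ is supported where $|\xi|\sim 2^j$, and the parabolic rescaling $\eta=2^{-j}\xi$ reduces the $L^1$-norm of the associated kernel to bounds on smooth, compactly supported symbols controlled uniformly by $\Lambda$. Integrating in $s\in[t,T]$ then absorbs the factor $4^j$, yielding $\|\Delta_j\partial_{ij}w_t\|_{0}\leqslant C\|\Delta_j f\|_{C^0_{x,t}}$ uniformly in $j$, and hence $\|\partial_{ij}w\|_{C^\alpha_{x,t}}\leqslant C\|f\|_{C^\alpha_{x,t}}$ after multiplication by $2^{j\alpha}$.

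The remaining terms in \eqref{Basic-Est1} are direct: $T^{-1}\|w\|_{C^\alpha_{x,t}}\leqslant C\|f\|_{C^\alpha_{x,t}}$ follows from the contractivity $[P^a_{t,s}g]_\alpha\leqslant[g]_\alpha$ and $\|P^a_{t,s}\|_{L^\infty\to L^\infty}\leqslant 1$ together with integration over $[t,T]$, and the $\partial_t w$ estimate is read off from $\partial_t w=-a^{ij}\partial_{ij}w-f$ using \eqref{Eq-con1} and the algebra structure of $C^\alpha$. Uniqueness follows by applying the just-proved bound to the difference of two solutions in $C^{2+\alpha}_{x,t}$. The main technical obstacle I anticipate is precisely the Bernstein estimate: because $a$ depends on $t$ only in a measurable fashion, one must check that the $L^1$-norm of the rescaled kernel is controlled by $C\mathrm{e}^{-c 4^j(s-t)}$ with constants depending solely on the ellipticity constant $\Lambda$, uniformly in the entire non-autonomous history of $a$ on $[t,s]$; this is handled by differentiating the compactly supported smooth symbol $\eta\mapsto \eta_i\eta_j\varphi(\eta)\exp(-\tfrac12\cdot 4^j\langle\eta,A_{t,s}\eta\rangle)$ and using the uniform comparison $A_{t,s}\sim(s-t)I$ from \eqref{Eq-con2}.
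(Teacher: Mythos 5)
Your Littlewood--Paley strategy for \eqref{Basic-Est1} is essentially the same as the paper's: both localise in frequency, estimate $\|\Delta_j P^a_{t,s}g\|_0$ by a Fourier-multiplier bound of the form $C(1\wedge (2^{2j}(s-t))^{-k})$ obtained from the parabolic rescaling $\eta=2^{-j}\xi$ and the comparison $A_{t,s}\sim (s-t)I$ forced by \eqref{Eq-con2}, and then integrate in $s$. So the existence and a~priori estimate are on the right track. One imprecision, though: the claim that ``$p^a_{t,s}$ is scalar so it commutes with the $\mathcal B$-valued structure and Young's inequality applies verbatim'' glosses over the fact that $a$, and hence the kernel $p^{a(\omega)}_{t,s}$, depends on $\omega$, while $\mathcal B = L^p(\Omega;\mathcal H)$. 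The convolution is therefore not by a fixed scalar kernel against a $\mathcal B$-valued function; the paper handles this by Minkowski's inequality followed by taking $\mathrm{ess\,sup}_{\omega}$ of $p^{a(\omega)}_{t,s}*\widetilde h_j$, reducing to a uniform (in $a$ satisfying \eqref{Eq-con2}) Fourier-multiplier estimate. You do implicitly invoke the same uniformity at the end, so this is a gap in exposition rather than in substance, but the proof as written is not quite legitimate without the Minkowski step.

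The real gap is uniqueness. You write that ``uniqueness follows by applying the just-proved bound to the difference of two solutions in $C^{2+\alpha}_{x,t}$,'' but the bound you proved is for the \emph{specific} function defined by the Duhamel formula $w_t=\int_t^T P^a_{t,s}f_s\,\mathrm{d} s$, not an a~priori estimate valid for an arbitrary $C^{2+\alpha}_{x,t}$ solution of \eqref{PDE0}. If $\widetilde w$ is another solution, the difference $v=w-\widetilde w$ solves the homogeneous equation, but you have not shown that such a $v$ admits the Duhamel representation (with $f=0$) and therefore vanishes; asserting this is circular. The paper's actual uniqueness argument is substantively different and nontrivial: it mollifies $v$ in $x$ by $\varrho_\varepsilon$, establishes for a.e.\ $\omega$ a polynomial growth bound on $v^\varepsilon(\cdot,\omega)$ via Sobolev embedding and Kolmogorov's continuity criterion, notes that for each $h\in\mathcal H$ the real-valued function $\langle v_t^\varepsilon(\omega),h\rangle$ solves the deterministic backward heat equation with zero terminal data and polynomial growth, invokes the classical scalar uniqueness theorem (John's book) to conclude $\langle v_t^\varepsilon(\omega),h\rangle\equiv 0$, and finally lets $\varepsilon\to 0$. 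You would need something of this kind; the one-line appeal to the Schauder bound does not close the argument.
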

\begin{proof} 
We first prove that the map $w$ defined above  satisfies \eqref{Basic-Est1} by using Littlewood-Paley decompositions. Recall that ${\mathcal B}= L^p(\Omega, {\mathscr F}, {\mathbf P}; {\mathcal H})$. 
For any $g\in L^1({\mathbb R}^n; {\mathcal B})+L^\infty({\mathbb R}^n; {\mathcal B})$, by Minkowski's inequality, we have  
\begin{equation}\label{eq-Pg}
\begin{aligned}
\|(\Delta_j  P^a_{t,s}g)(x)\|_{{\mathcal B}}= &\left({\mathbf E} |(\Delta_j  P^a_{t,s}g) (x)|_{{\mathcal H}}^p\right)^{1/p}= \left({\mathbf E} | (P^{a}_{t,s} \widetilde \Delta_j\Delta_j g) (x) |_{{\mathcal H}}^p\right)^{1/p}\\
=&\left[\int_\Omega \left|\int_{{\mathbb R}^n}(p_{t,s}^{a(\omega)}*\widetilde h_j)(y)\cdot \Delta_j g(x-y, \omega) \mathrm{d} y\right|_{{\mathcal H}}^p {\mathbf P}(\mathrm{d} \omega) \right]^{1/p}\\
\leqslant & \int_{{\mathbb R}^n} \mathrm{d} y \left[\int_\Omega |p^{a(\omega)}_{t,s}*\widetilde h_j (y)|^p \cdot |\Delta_j g(x-y,\omega)|_{{\mathcal H}}^p\  {\mathbf P}(\mathrm{d} \omega)\right]^{1/p}\\
\leqslant & \|\Delta_j g\|_0 \int_{{\mathbb R}^n} \left[ \mathrm{ess\,sup}_{\omega\in \Omega} |p_{t,s}^{a(\omega)}* \widetilde h_j(y)| \right]  \mathrm{d} y. 
\end{aligned}
\end{equation}
By \eqref{Eq-con2}, 
\begin{align*}
&\int_{{\mathbb R}^n} \left[ \mathrm{ess\,sup}_{\omega\in \Omega} |p_{t,s}^{a(\omega)}* \widetilde h_j(x)| \right]    \mathrm{d} x\leqslant  \left \| \sup_{{\mathbb I}/\Lambda\leqslant a\leqslant \Lambda{\mathbb I}} | p^a_{t,s}*\widetilde h_j(x) | \right\|_{L^1_x}\\
=&\int_{{\mathbb R}^n} \mathrm{d} x \  \sup_{{\mathbb I}/\Lambda\leqslant a\leqslant \Lambda{\mathbb I}} \left|\int_{{\mathbb R}^n} p_{t,s}^a(x-y) 2^{jn}\widetilde h_0(2^jy) \mathrm{d} y\right|\\
=&\int_{{\mathbb R}^n} \mathrm{d} x \  \sup_{{\mathbb I}/\Lambda\leqslant a\leqslant \Lambda{\mathbb I}} \left|\int_{{\mathbb R}^n} 2^{jn} p_{t,s}^{2^{2j}a}(2^j x-z) \widetilde h_0(z) \mathrm{d} z\right|\\
=&\int_{{\mathbb R}^n} \mathrm{d} x \  \sup_{{\mathbb I}/\Lambda\leqslant a\leqslant \Lambda{\mathbb I}} \left|\int_{{\mathbb R}^n}  p_{t,s}^{2^{2j}a}(x-z) \widetilde h_0(z) \mathrm{d} z\right|. 
\end{align*}
Noting that 
$$
\|f\|_{L^1} \leqslant C_{n, N} \|(1+|x|^{2N}) f(x)\|_{L^\infty}, \quad \forall N>n/2
$$ and 
$$
{\mathscr F}^{-1}(p^a_{t,s})(\xi)= \exp(-\langle \xi, A_{t,s} \xi\rangle ), 
$$
we obtain 
\begin{align*}
&\int_{{\mathbb R}^n} \left[ \mathrm{ess\,sup}_{\omega\in \Omega} |p_{t,s}^{a(\omega)}* \widetilde h_j(x)| \right]    \mathrm{d} x\leqslant \int_{{\mathbb R}^n} \mathrm{d} x \  \sup_{{\mathbb I}/\Lambda\leqslant a\leqslant \Lambda{\mathbb I}} \left|\int_{{\mathbb R}^n}  p_{t,s}^{2^{2j}a}(x-z) \widetilde h_0(z) \mathrm{d} z\right|\\
\leqslant & C  \left\| (1+|x|^{2N})  \sup_{{\mathbb I}/\Lambda\leqslant a\leqslant \Lambda{\mathbb I}} |p_{t,s}^{2^{2j}a}* \widetilde h_0| (x)\right\|_{L^\infty_x}\\
= & C  \sup_{{\mathbb I}/\Lambda\leqslant a\leqslant \Lambda{\mathbb I}} \left\| (1+|x|^{2N})   |p_{t,s}^{2^{2j}a}* \widetilde h_0| (x)\right\|_{L^\infty_x}\\
\leqslant & C \sup_{{\mathbb I}/\Lambda\leqslant a\leqslant \Lambda{\mathbb I}} \left\|(1+\Delta^N) [{\mathscr F}^{-1}(p^{2^{2j}a}_{t,s})\cdot {\mathscr F}^{-1} (\widetilde h_0) ](\xi) \right\|_{L^1_\xi}\\
= & C  \sup_{{\mathbb I}/\Lambda\leqslant a\leqslant \Lambda{\mathbb I}} \int_{B_{\frac{7}{4}}\backslash B_{\frac{1}{4}}} \left| (1+\Delta^N)[ \exp(-2^{2j}\langle \xi, A_{t,s} \xi\rangle )\cdot \widetilde \varphi](\xi)\right|  \ \mathrm{d} \xi. 
\end{align*}
Since $\sup_{|\alpha|=k} \partial^\alpha(\mathrm{e}^{a|\xi|^2}) \leqslant C (1+|a|)^k(1+|\xi|)^k \mathrm{e}^{a|\xi|^2}$, we get 
\begin{equation}\label{eq-p*h}
\begin{split}
&\int_{{\mathbb R}^n} \left[ \mathrm{ess\,sup}_{\omega\in \Omega} |p_{t,s}^{a(\omega)}* \widetilde h_j(x)| \right]    \mathrm{d} x\\
\leqslant & C\int_{\frac{1}{4} \leqslant |\xi|\leqslant \frac{7}{4} } [1+(\Lambda 2^{2j}(s-t) )^{2N}] \exp[{-2^{2j}(s-t)|\xi|^2/\Lambda}]  \mathrm{d} \xi. 
\end{split}
\end{equation}
Denote $\Lambda_j :=\Lambda 2^{2j}(s-t) $ and $\lambda_j :=\frac{1}{16}\Lambda^{-1} 2^{2j} (s-t) $. Combining \eqref{eq-Pg} and \eqref{eq-p*h}, we get 
\begin{align*}
\|\Delta_j  P^a_{t,s}g\|_{0}=\sup_{x\in{\mathbb R}^n}\|(\Delta_j  P^a_{t,s}g)(x)\|_{{\mathcal B}} \leqslant C (1+\Lambda_j^{2N}) \mathrm{e}^{-\lambda_j} \big|B_{\frac{7}{4}}\backslash B_{\frac{1}{4}} \big|\|\Delta_j g\|_0. 
\end{align*}
By Lemma \ref{Le-Chart-Holder} and the elementary inequality: 
$$(1+\Lambda_j^{2N}) \mathrm{e}^{-\lambda_j } \leqslant C_k (1\wedge [2^{2j}\cdot(s-t)]^{-k})\ (\forall k\in {\mathbb N}),
$$ 
we get 
\begin{align*}
\|\Delta_j P^a_{t,s} g\|_0\leqslant &C 2^{-j\alpha} \|g\|_\alpha (1+\Lambda_j^{2N}) \mathrm{e}^{-\lambda_j}\\
\leqslant & C_k 2^{-j\alpha}(1\wedge [2^{2j}\cdot(s-t)]^{-k}) \|g\|_\alpha. 
\end{align*}
This yields 
\begin{align*}
\|\Delta_j w_t\|_0 =&\left\|\Delta_j \int_t^T P^a_{t,s} f_{s} \ \mathrm{d} s\right\|_0  \\
\leqslant& C 2^{-j\alpha} \|f\|_{C^\alpha_{x,t}} \int_0^{T-t} (1\wedge 2^{-2jk} r^{-k}) \mathrm{d} r. 
\end{align*}
If $t \geqslant T-2^{-2j}$, then  
$$
\|\Delta_j w_t\|_0\leqslant  C 2^{-j\alpha} \|f\|_{C^\alpha_{x,t}}\cdot (T-t) \leqslant C 2^{-j(2+\alpha)} \|f\|_{C^\alpha_{x,t}}; 
$$
if $t<T-2^{-2j}$, by choosing $k=2$, then  
\begin{align*}
\|\Delta_j w_t\|_0\leqslant&  C 2^{-j\alpha} \|f\|_{C^\alpha_{x,t}}\cdot \left( 2^{-2j}+ 2^{-4j}\int_{2^{-2j}}^{T-t} s^{-2} \mathrm{d} s \right)\\
\leqslant &C 2^{-j(2+\alpha)} \|f\|_{C^\alpha_{x,t}}. 
\end{align*}
Again using Lemma \ref{Le-Chart-Holder}, one sees that 
$$
\|w\|_{C^{2+\alpha}_{x,t}}\leqslant C 
\sup_{\substack{t\in [0,T]\\ j\geqslant -1}} \left(2^{-j(2+\alpha)}\|\Delta_j w_t\|_0\right) \leqslant C \|f\|_{C^{\alpha}_{x,t}}. 
$$
So we complete our proof for \eqref{Basic-Est1}. By basic calculations, one can verify that $w$ satisfies \eqref{PDE0}. It remains to show that $w$ defined above is the unique solution to \eqref{Eq-RPDE} in $C^{2+\alpha}_{x,t}$. Assume $\widetilde w\in C^{2+\alpha}_{x,t}$ is another function satisfy \eqref{PDE0}. Let $0\leqslant \varrho\in C_c^\infty({\mathbb R}^n)$ satisfying $\int \varrho =1$ and $\varrho_\varepsilon (x)=\varepsilon^{-n} \varrho(x/\varepsilon)$. Define $v:= w-\widetilde w$ and $v^\varepsilon:= v* \varrho_\varepsilon$. For any $k>n/p$, $N>1$ and $\varepsilon\in (0,1)$,  by Sobolev embedding and H\"older's inequality, 
\begin{align*}
&{\mathbf E} \|v^\varepsilon_{t_1}-v^\varepsilon_{t_2}\|_{L^\infty(B_N; {\mathcal H})}^p={\mathbf E} \sup_{\|h\|_{{\mathcal H}=1}}\| \langle v^\varepsilon_{t_1}-v^\varepsilon_{t_2}, h\rangle \|_{L^\infty(B_N)}^p\\
 \leqslant &C
 N^{kp-n} {\mathbf E} \sup_{\|h\|_{{\mathcal H}}=1}\| \langle v^\varepsilon_{t_1}-v^\varepsilon_{t_2}, h\rangle \|_{W^{k, p}(B_N)}^p\\
 \leqslant & C
 N^{kp-n} {\mathbf E}  \sum_{i=0}^{k} \int_{B_N} \left|  \nabla^{i} \int_{t_1}^{t_2}   (a^{ij} \partial_{ij}v^\varepsilon_s) (x) \mathrm{d} s\right|_{{\mathcal H}}^p \mathrm{d} x\\
 \leqslant & C
 N^{kp-n} |t_2-t_1|^{p-1}  \sum_{i=2}^{k+2} \int_{B_N}\int_{t_1}^{t_2} {\mathbf E} \left|\int_{B_{N+1}} v_s(y) \nabla^i \rho_\varepsilon(x-y)\mathrm{d} y\right|_{{\mathcal H}}^p \mathrm{d} s \, \mathrm{d} x\\
 \leqslant & C_\varepsilon   
N^{kp+np-n} |t_2-t_1|^{p-1}  \int_{t_1}^{t_2}\int_{B_{N+1}} {\mathbf E} |v_s(y)|_{{\mathcal H}}^p\mathrm{d} y \\
 \leqslant & C_\varepsilon N^{(k+n)p} |t_2-t_1|^{p} \|v\|_{C^{0}_{x,t}}^p. 
\end{align*}
Due to Kolmogorov's criterion,   for almost surely $\omega\in \Omega$ and all $\varepsilon\in (0,1)$,  $(x,t)\in Q_T$ , 
$$
\| \langle v^\varepsilon_t(x, \omega)\|_{{\mathcal H}} \leqslant C_\varepsilon(\omega) (1+|x|)^{k+n}, 
$$
which means $v^\varepsilon_t(\cdot,\omega)$ satisfies a certain growth condition at infinity. On the other hand, by definition, for almost surely $\omega\in \Omega$ and each $h\in {\mathcal H}$, the real valued function $\langle v_t^\varepsilon(\omega), h\rangle $ satisfies 
$$
\partial_t \langle v_t^\varepsilon(\omega), h\rangle  +a_t^{ij}(\omega)\partial_{ij} \langle v_t^\varepsilon(\omega), h\rangle =0, \quad \langle v^\varepsilon_T(\omega), h\rangle =0. 
$$
Thus, 
we have $\langle v_t^\varepsilon(\omega), h\rangle \equiv 0$ (see \cite[Chapter 7, p176]{john1978partial}) i.e. $w*\varrho_\varepsilon=\widetilde w*\varrho_\varepsilon$ a.s.. So $\|w_t(x)-\widetilde w_t(x)\|_{{\mathcal B}}\leqslant \lim_{\varepsilon\to 0} \|w_t(x)-(w*\varrho_\varepsilon)_t(x)\|_{{\mathcal B}}+  \lim_{\varepsilon\to0} \|\widetilde w_t(x)-(\widetilde w*\varrho_\varepsilon)_t(x)\|_{{\mathcal B}}=0.$ So we complete our proof. 
\end{proof}

\begin{proof}[Proof of Theorem \ref{Th-RPDE}] 
Thanks to Lemma \ref{Le-RPDE} and the method of continuity, we only need to prove the aprior estimate  \eqref{RPDE-Schuader}. Assume $w\in C^{2+\alpha}_{x,t}$ is a solution to \eqref{Eq-RPDE}. Let $\chi\in C_c^\infty({\mathbb R}^d)$ so that $\chi(x)=1$ if $|x|\leqslant 1$ and $\chi(x)=0$ if $|x|\geqslant 2$. Fix a number $\delta>0$, which will be determined  later. Define $\chi^z_\delta=\chi((x-z)/\delta)$, then 
$$
\partial_t (w\chi^z_\delta) +L_t^z (w\chi^z_\delta)+(f\chi^z_\delta)+[\chi^z_\delta L_t w- L_t^z(w\chi^z_\delta)]=0, 
$$
where $L_t^z w_t(x):= a^{ij}_t(z) \partial_{ij} w_t(x)$. Using \eqref{Eq-con1} and noting that 
\begin{align*}
\chi^z_\delta L_t w- L_t^z(w\chi^z_\delta)=\chi^z_\delta (a^{ij}-a^{ij}_z)\partial_{ij} w+(b^i\chi^z_\delta-2 a^{ij}_z \partial_j \chi^z_\delta) \partial_i w+(c\chi^z_\delta-a^{ij}_z \partial_{ij} \chi^z_\delta) w,
\end{align*}
we have 
\begin{align}\label{Commutator1}
\begin{aligned}
&\| [\chi^z_\delta L_t w- L_t^z(w\chi^z_\delta)]\|_{C^\alpha_{x,t}}\\
\leqslant & C \delta^\alpha \|\nabla^2 w\|_{C^{\alpha, 0}_{x,t}(B_{2\delta}(z)\times[0,T]; {\mathcal B}))} + C\Big(  \delta^{-\alpha} \|\nabla^2 w\|_{C^0_{x,t}} \\
&+ \delta^{-1-\alpha} \|\nabla w\|_{C^\alpha_{x,t}} + \delta^{-2-\alpha} \|w\|_{C^\alpha_{x,t}} \Big). 
\end{aligned}
\end{align}
Combining Lemma \ref{Le-RPDE} and  equation \eqref{Commutator1}, we obtain that for any $\delta>0$, 
\begin{align*}
&\sup_{z\in {\mathbb R}^n} \|w\|_{C^{2+\alpha,0}_{x,t}(B_{2\delta}(z)\times[0,T]; {\mathcal B})}\leqslant C_n \sup_{z\in {\mathbb R}^n} \|w\|_{C^{2+\alpha,0}_{x,t}(B_\delta(z)\times[0,T]; {\mathcal B})}\\
\leqslant & C \sup_{z\in {\mathbb R}^n} \|w\chi^z_\delta\|_{C^{2+\alpha}_{x,t}} \leqslant C\sup_{z\in {\mathbb R}^n}\|f\chi^z_\delta+[\chi^z_\delta L_t w- L_t^z(w\chi^z_\delta)]\|_{C^{\alpha}_{x,t}}\\
\leqslant & C \delta^\alpha  \sup_{z\in {\mathbb R}^n} \|w\|_{C^{2+\alpha,0}_{x,t}(B_{2\delta}(z)\times[0,T]; {\mathcal B})}+C \Big(\delta^{-\alpha} \|\nabla^2 w\|_{C^{0}_{x,t}} + \delta^{-1-\alpha} \|\nabla w\|_{C^\alpha_{x,t}} \\
&+ \delta^{-2-\alpha} \|w\|_{C^\alpha_{x,t}} + \delta^{-\alpha} \|f\|_{C^\alpha_{x,t}}\Big). 
\end{align*}
By choosing $\delta\in (0,1)$ sufficiently small such that $C\delta^\alpha\leqslant 1/2$, we obtain 
\begin{align*}
\sup_{z\in {\mathbb R}^n} \|w\|_{C^{2+\alpha,0}_{x,t}(B_{2\delta}(z)\times[0,T]; {\mathcal B})}\leqslant  C_\delta \Big(\|w\|_{C^2_{x,t}} + \|f\|_{C^\alpha_{x,t}}\Big). 
\end{align*}
Using interpolation, we get 
\begin{align*}
\|w\|_{C^{2+\alpha}_{x,t}} \leqslant& C_\delta \sup_{z\in {\mathbb R}^n} \|w\|_{C^{2+\alpha,0}_{x,t}(B_{2\delta}(z)\times[0,T];{\mathcal B})}\\
\leqslant & \varepsilon C_\delta \|w\|_{C^{2+\alpha}_{x,t}} +C_{\delta,\varepsilon} \Big(\|w\|_{C^0_{x,t}} + \|f\|_{C^\alpha_{x,t}}\Big), \quad \forall\varepsilon\in (0,1). 
\end{align*}
By choosing $\varepsilon$ small such that $\varepsilon C_\delta\leqslant 1/2$, we get 
\begin{equation}\label{Est-step1}
\|w\|_{C^{2+\alpha}_{x,t}} \leqslant C \Big(\|w\|_{C^0_{x,t}} + \|f\|_{C^\alpha_{x,t}}\Big). 
\end{equation}
It remains to show that $\|w\|_{C^0_{x,t}}$ can be controlled by $\|f\|_{C^\alpha_{x,t}}$. By Minkowski's inequality, for any $t\in [0,T]$, 
 \begin{equation}\label{eq-wB1}
\begin{split}
&\left( {\mathbf E} \int_{B_r(x)} |w_t(y)|_{{\mathcal H}}^p\mathrm{d} y\right)^{1/p}= \left( {\mathbf E} \int_{B_r(x)} \left|\int_{t}^T \partial_s w_s(y) \mathrm{d} s\right|_{{\mathcal H}}^p\mathrm{d} y\right)^{1/p}\\
=& \left( {\mathbf E} \int_{B_r(x)} \left|\int_t^T (L_s w_s+f_s)(y) \mathrm{d} s\right|_{{\mathcal H}}^p\mathrm{d} y\right)^{1/p}\\
\leqslant & C \int_t^T \left( {\mathbf E} \int_{B_r(x)} |L_sw_s +f_s|_{{\mathcal H}}^p(y) \mathrm{d} y \right)^{1/p} \mathrm{d} s\\
\leqslant & C T r^{n/p} (\|w\|_{C^2_{x,t}}+\|f\|_{C^0_{x,t}}).  
\end{split}
\end{equation}
One the other hand,  by H\"older's inequality, 
\begin{equation}\label{eq-wB2}
\begin{split}
|w_t(x)|_{{\mathcal B}}\leqslant & \fint_{B_r(x)} |w_t(x)-w_t(y)|_{\mathcal B}\, \mathrm{d} y+  \fint_{B_r(x)} |w_t(y)|_{{\mathcal B}} \, \mathrm{d} y\\
\leqslant & \|\nabla w\|_{C^0_{x,t}} \fint_{B_r(x)}|x-y| \mathrm{d} y+ \fint_{B_r(x)} \left( {\mathbf E} \int_{B_r(x)} |w_t(y)|_{\mathcal H}^p\right)^{1/p}\,  \mathrm{d} y\\
\leqslant & r\|\nabla w\|_{C^0_{x,t}}+r^{-n/p}\left({\mathbf E} \int_{B_r(x)} |w_t(y)|_{\mathcal H}^p\,  \mathrm{d} y\right)^{1/p}.
\end{split}
\end{equation}
Combining \eqref{eq-wB1} and \eqref{eq-wB2}, we obtain 
$$
\|w\|_{C^0_{x,t}} \leqslant r \|\nabla w\|_{C^0_{x,t}}+ C T (\|w\|_{C^2_{x,t}}+\|f\|_{C^0_{x,t}}). 
$$
Due to \eqref{Est-step1}, 
$$
\| w\|_{C^2_{x,t}}\leqslant C(\|f\|_{C^\alpha_{x,t}}+\|w\|_{C^0_{x,t}}). 
$$
Combining the above two inequalities and letting $r\to 0$, we get  
$$
\|w\|_{C^0_{x,t}}\leqslant CT (\| w\|_{C^0_{x,t}}+\|f\|_{C^\alpha_{x,t}}).
$$
By choosing $T$ sufficiently small such that $CT \leqslant 1/2$, we get 
\[\|w\|_{C^0_{x,t}}\leqslant CT \|f\|_{C^\alpha_{x,t}}. \]
This together with \eqref{Est-step1} implies that \eqref{RPDE-Schuader} holds for some small $T>0$. The same estimate for arbitrary $T\in (0,1]$ can be obtained by induction. 
\end{proof}

\begin{remark}\label{Rek-Classic}
If $f$ satisfies 
$$
\mathrm{ess\,sup}_{\omega\in \Omega} \|f(\omega)\|_{C^{\alpha,0}_{x,t}(Q_T; {\mathbb R})}<\infty, 
$$
then \eqref{Eq-RPDE} can be solved  pointwisely and by the classic Schauder estimate, it holds that 
\begin{align}
\begin{aligned}
&\mathrm{ess\,sup}_{\omega\in \Omega}\Big( \|\partial_t w(\omega)\|_{C^{\alpha,0}_{x,t}(Q_T; {\mathbb R})}+ \|w(\omega)\|_{C^{2+\alpha,0}_{x,t}(Q_T; {\mathbb R})}\\
&+ T^{-1}\|w(\omega)\|_{C^{\alpha,0}_{x,t}(Q_T; {\mathbb R})}\Big) \leqslant  C \mathrm{ess\,sup}_{\omega\in \Omega}\|f(\omega)\|_{C^{\alpha,0}_{x,t}(Q_T; {\mathbb R})}. 
\end{aligned}
\end{align}
\end{remark}

\section{Schauder estimate for Backward SPDE}
In this section, we prove the solvability of \eqref{Eq-Bspde} in $C^{2+\alpha}_{x,t}\times C^{2+\alpha}_{x,t}$ space. Recall that $ W_t$ is a $d$-dimensional Brownian motion on a complete  probability space $(\Omega, {\mathscr F}, {\mathbf P})$, ${\mathscr F}_t= \sigma\{ W_s: s\leqslant t\}\bigvee \mathcal{N}$ and ${\mathscr F}={\mathscr F}_1$.  
For any $t\in [0,1]$ and $X\in {\mathscr F}$, we denote ${\mathbf E}^t X := {\mathbf E}(X|{\mathscr F}_t)$. 
Throughout this section, we always assume $T\in (0,1]$, ${\mathcal H}$ is a real Hilbert space, ${\mathcal B}=L^p(\Omega; {\mathcal H})$ for some $p\geqslant 2$ and $H=L^2([0,1]; {\mathbb R}^d)$. With a little abuse of notation, $L^p(\Omega)=L^p(\Omega; {\mathbb R}^m)$ for some integer $m\geqslant1$ that can be changed in different places. 

\begin{lemma}\label{Le-BSPDE}
Let  ${\mathcal H}={\mathbb R}$. Assume that $a,b,c$ are ${\mathscr B}\times {\mathscr P}$ measurable and satisfy Assumption \ref{Aspt1}, then the following BSPDE 
$$
u_t(x)= \int_t^T (L_s  u_s+ f_s)(x)\mathrm{d} s-\int_t^T v_s(x)\cdot \mathrm{d}  W_s
$$
has an ${\mathscr F}_t$-adapted solution $(u,v)$ in $C^{2+\alpha}_{x,t}\times C^{\alpha}({\mathbb R}^n; L^p(\Omega; H)$ and $u_t={\mathbf E}^t w_t$, where $w$ is the solution to \eqref{Eq-RPDE}. Moreover,  
$$
\|u\|_{C^{2+\alpha}_{x,t}}+ T^{-1}\|u\|_{C^{0}_{x,t}} +   \|v\|_{C^\alpha({\mathbb R}^n; L^p(\Omega; H))}  \leqslant C \|f\|_{C^\alpha_{x,t}},  
$$
where $C$ only depends on $n, d, p, \alpha, \Lambda$. 
\end{lemma}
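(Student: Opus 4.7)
The plan is to construct the adapted solution by first invoking the non-adapted theory of Section 2, and then projecting onto the filtration to obtain $u$, with $v$ coming from martingale representation.

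\textbf{Step 1 (obtaining $w$ and defining $u$).} I would apply Theorem~\ref{Th-RPDE} with $\mathcal{H}=\mathbb{R}$ and $\mathcal{B}=L^p(\Omega)$. Assumption~\ref{Aspt1} holds almost surely by hypothesis, and the essential supremum over $\omega$ of the deterministic norms bounds the $C^{\alpha}_{x,t}$-norm of the coefficients as maps into $L^p(\Omega)$. This yields a unique $w\in C^{2+\alpha}_{x,t}$ with $\|w\|_{C^{2+\alpha}_{x,t}}+T^{-1}\|w\|_{C^0_{x,t}}\leqslant C\|f\|_{C^{\alpha}_{x,t}}$. Define
\[
u_t(x):=\mathbf{E}^t w_t(x).
\]
Since $\mathbf{E}^t$ is a contraction on $L^p(\Omega)$ and commutes with spatial differentiation (the latter is taken in the strong $L^p(\Omega)$-sense, and $\mathbf{E}^t$ is a bounded linear operator), the same Schauder bound transfers directly to $u$:
\[
\|u\|_{C^{2+\alpha}_{x,t}}+T^{-1}\|u\|_{C^0_{x,t}}\leqslant\|w\|_{C^{2+\alpha}_{x,t}}+T^{-1}\|w\|_{C^0_{x,t}}\leqslant C\|f\|_{C^{\alpha}_{x,t}}.
\]

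\textbf{Step 2 (deriving the BSPDE).} For $r\leqslant t\leqslant T$, the identity $w_r(x)=w_t(x)+\int_r^t(L_s w_s+f_s)(x)\,\mathrm{d}s$ gives, upon conditioning on $\mathscr{F}_r$ and using Fubini,
\[
u_r(x)=\mathbf{E}^r w_t(x)+\int_r^t\mathbf{E}^r(L_s w_s+f_s)(x)\,\mathrm{d}s.
\]
Here is the key observation: for $s\geqslant r$ the coefficients $a_s,b_s,c_s,f_s$ are $\mathscr{F}_s$-measurable, so the tower property yields $\mathbf{E}^r(L_s w_s)(x)=\mathbf{E}^r\mathbf{E}^s(a^{ij}_s\partial_{ij}w_s+\cdots)(x)=\mathbf{E}^r(L_s u_s)(x)$. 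Setting $t=T$ and using $u_T\equiv0$, one obtains
\[
u_r(x)+\int_0^r(L_s u_s+f_s)(x)\,\mathrm{d}s=\mathbf{E}^r\!\int_0^T(L_s u_s+f_s)(x)\,\mathrm{d}s,
\]
so the left-hand side is a continuous $L^p$-martingale in $r$. By the Brownian martingale representation theorem, there exists an $\mathscr{F}_s$-predictable $v_s(x)$ such that this martingale equals its value at $r=0$ plus $\int_0^r v_s(x)\cdot\mathrm{d}W_s$. Taking differences between times $r$ and $T$ yields the desired BSPDE.

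\textbf{Step 3 (H\"older regularity of $v$).} Applying martingale representation to the difference $M_r(x)-M_r(y)$ of the martingales above, and using the BDG inequality followed by Minkowski in time, I would estimate
\[
\bigl\|v(x)-v(y)\bigr\|_{L^p(\Omega;H)}^p\leqslant C\,\mathbf{E}\!\left|\int_0^T\!\bigl[(L_s u_s+f_s)(x)-(L_s u_s+f_s)(y)\bigr]\mathrm{d}s\right|^p\leqslant C T^p\,\|Lu+f\|_{C^{\alpha}_{x,t}}^p|x-y|^{\alpha p},
\]
and similarly $\|v(x)\|_{L^p(\Omega;H)}\leqslant C\|Lu+f\|_{C^0_{x,t}}$. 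Since $\|Lu+f\|_{C^{\alpha}_{x,t}}\leqslant C(\|u\|_{C^{2+\alpha}_{x,t}}+\|f\|_{C^{\alpha}_{x,t}})\leqslant C\|f\|_{C^{\alpha}_{x,t}}$ by Step~1 and the bounds on $a,b,c$, we get the claimed Hölder estimate for $v$. A Kolmogorov-type continuity argument upgrades the pointwise-in-$x$ version from martingale representation to a genuinely $L^p(\Omega;H)$-continuous (indeed Hölder) version in $x$.

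\textbf{Main obstacle.} The delicate point is not the manipulation of conditional expectations but producing $v$ jointly measurable in $(s,x,\omega)$ with the stated Hölder regularity in $x$: martingale representation is performed pointwise in $x$, so one must pass to a Hölder-continuous version via the BDG-based estimate above, and check that this version is still the integrand in the stochastic integral representation for each $x$. The equality between the constructed version and the original representative follows from Itô's isometry applied to their difference, which vanishes by the a.s.\ equality of the two representations at each fixed $x$.
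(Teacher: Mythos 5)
Your proof is correct and follows essentially the same route as the paper: define $u_t=\mathbf{E}^t w_t$, transfer the Schauder bound via the contraction and commutation properties of $\mathbf{E}^t$ (the paper's Lemma~\ref{Le-con-norm}), identify the martingale $r\mapsto\mathbf{E}^r\!\int_0^T(L_su_s+f_s)(x)\,\mathrm{d}s$ (the paper writes this as $m_t(x)$), invoke Brownian martingale representation to produce $v$, and close with BDG plus the $C^{2+\alpha}$ bound on $u$. Your flagged ``main obstacle'' regarding a jointly measurable Hölder version of $v$ is a legitimate subtlety that the paper also leaves largely implicit, and your Kolmogorov/Itô-isometry remedy is the standard fix.
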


\begin{proof}
Let $w$ be the solution of \eqref{Eq-RPDE}. Define $u_t(x)= {\mathbf E}^t w_t(x)$. By Theorem \ref{Th-RPDE} and Lemma \ref{Le-con-norm}, 
$$
\|u\|_{C^{2+\alpha}_{x,t}}+ T^{-1}\|u\|_{C^{0}_{x,t}}  \leqslant C \|f\|_{C^\alpha_{x,t}}. 
$$
Since $a_t(x), b_t(x)\in {\mathscr F}_t$, by the definitions of $u$, we have 
\begin{align*}
u_t(x)=&{\mathbf E}^{t}\left\{\int_t^T [(L_s  w_s+f_s)(x)]\mathrm{d} s\right\}=\int_t^T{\mathbf E}^{s} [(L_s w_s+f_s)(x)]\mathrm{d} s\\
&+\left\{\int_t^T{\mathbf E}^{t} [(L_s w_s+f_s)(x)]\mathrm{d} s-\int_t^T{\mathbf E}^{s} [(L_s w_s+f_s)(x)]\mathrm{d} s\right\}\\
=& \int_t^T (L_s  u_s+f_s)(x)\mathrm{d} s+m_t(x)-m_T(x). 
\end{align*}
Here 
\begin{equation}\label{eq-mt}
\begin{split}
m_t(x)=\int_t^T {\mathbf E}^{t} [(L_sw_s&+f_s)(x)]\mathrm{d} s\\
&+\int_0^t{\mathbf E}^{s} [(L_s  w_s+f_s)(x)]\mathrm{d} s\in {\mathscr F}_t. 
\end{split}
\end{equation}
For any $t\in[0,T]$, noting that 
\begin{align*}
{\mathbf E}^{t} m_T(x)=& {\mathbf E}^t \int_0^T {\mathbf E}^{s} [(L_s  w_s+f_s)(x)]\mathrm{d} s \\
=&{\mathbf E}^t \int_0^t {\mathbf E}^{s} [(L_s  w_s+f_s)(x)]\mathrm{d} s+ {\mathbf E}^t \int_t^T {\mathbf E}^{s} [(L_s  w_s+f_s)(x)]\mathrm{d} s \\
=&\int_0^t {\mathbf E}^{s} [(L_s  w_s+f_s)(x)]\mathrm{d} s+ \int_t^T {\mathbf E}^{t} [(L_s  w_s+f_s)(x)]\mathrm{d} s= m_t(x), 
\end{align*}
$m_\cdot(x)$ is a ${\mathscr F}_t$-martingale. By Theorem \ref{Th-RPDE}, \eqref{eq-mt} and Lemma \ref{Le-con-norm}, one can see that $m\in C^\alpha_{x,t}$. Thanks to  martingale representation, there is an ${\mathscr F}_t$-adapted process $v_\cdot(x)$ such that 
$$
m_t(x)-m_0(x)=\int_0^t v _s(x)\cdot\mathrm{d} W_s. 
$$
Hence, we have 
$$
u_t(x)= \int_t^T (L_s u_s+ f_s)(x)\mathrm{d} s- \int_t^T v_s(x)\cdot \mathrm{d}  W_s, 
$$
i.e. 
\begin{equation}\label{Eq-ut}
u_t(x)=u_0(x) - \int_0^t (L  u_s+ f_s)(x)\mathrm{d} s+ \int_0^t v_s(x)\cdot \mathrm{d}  W_s. 
\end{equation}
By \eqref{eq-mt} and Burkholder-Davis-Gundy inequality, we obtain 
\begin{align*}
&{\mathbf E} \left[\left(\int_0^T |v_t(x)-v_t(y)|^2 \mathrm{d} t\right)^{\frac{p}{2}} \right] \\
=&{\mathbf E} \langle m(x)-m(y)\rangle _T^{\frac{p}{2}}\leqslant C {\mathbf E} |m_T(x)-m_T(y)|^p \\
=& C {\mathbf E} \left| \int_0^T{\mathbf E}^{s} [(L_s  w_s+f_s)(x)-(L_s  w_s+f_s)(y)]\ \mathrm{d} s\right|^p\\
\leqslant & C \int_0^T   {\mathbf E} \left|{\mathbf E}^s \left[(L_s  w_s+f_s)(x)-(L_s  w_s+f_s)(y)\right]\right|^p \ \mathrm{d} s\\
\leqslant & C \int_0^T  {\mathbf E} |(L_s  w_s+f_s)(x)-(L_s  w_s+f_s)(y)|^p \ \mathrm{d} s\\
\leqslant & C |x-y|^{\alpha p} \left(\|w\|^p_{C^{2+\alpha}_{x,t}}+\|f\|^p_{C^{\alpha}_{x,t}}\right) \leqslant C |x-y|^{\alpha p} \|f\|^p_{C^{\alpha}_{x,t}}, 
\end{align*}
which yields 
$$
\|v\|_{C^\alpha({\mathbb R}^n; L^p(\Omega; H))} \leqslant C \|f\|_{C^{\alpha}_{x,t}}.  
$$
So we complete our proof. 
\end{proof}

\medskip

As we mentioned in the introduction, the Zvonkin type transform is an effective way to prove the well-posedeness of SDEs with singular coefficients. However, the $C^\alpha$-regularity of $v$ in the spatial variable is not enough to apply this trick. So we need to get better regularity estimate for $v$ under some mild conditions.  To achieve this goal, we start with some definitions and lemmas. Let ${\mathcal S}_b$ be the set of random variables of the form 
$$
F=f(\langle h_1, W\rangle , \cdots, \langle h_m, W\rangle ), 
$$
where $ f\in C_b^\infty({\mathbb R}^m)$, $h_i\in H$ and $\langle h_i, W\rangle := \int_0^1 h_s\mathrm{d}  W_s$. We define the operator $D$ on ${\mathcal S}_b$ with values in the set of $H$-valued random variables, by
$$
DF=\sum_{i=1}^m \partial_i f(\langle h_1, W\rangle , \cdots, \langle h_m, W\rangle ) h_i.
$$
For any $p\in [1,\infty)$, 
${\mathbb D}^{1,p}$ is the closure of the set ${\mathcal S}_b$ with respect to the norm 
$\|F\|_{{\mathbb D}^{1,p}}:= \|F\|_{p}+\|DF\|_{L^p(\Omega; H)}$. 
\begin{lemma}\label{Le-Ety}
Suppose $\{y_t\}_{t\in [0,1]}$ is a process (may not be adapted) on $(\Omega, {\mathbf P}, {\mathscr F})$ and 
$$
y_t=y_0+\int_0^t \dot y_r \mathrm{d} r,  
$$ 
with $y_0\in {\mathbb D}^{1,2}$ and $\dot y\in L^2([0,1]; {\mathbb D}^{1,2})$. Then there exists a random field $\{y_{s,t}\}_{(s,t)\in [0,1]^2}$ such that for each $t\in [0,1]$, $y_{\cdot, t}= D_\cdot y_t$ in $L^2(\Omega;H)$; for each $s\in [0,1]$, the map $[0,1]\ni t \mapsto y_{s,t} \in L^2(\Omega;{\mathbb R}^d)$ is absolutely  continuous and  
\begin{equation}\label{Eq-Etyt}
{\mathbf E}^t y_t={\mathbf E} y_0+ \int_0^t {\mathbf E}^s \dot y_s \mathrm{d} s +\int_0^t {\mathbf E}^s y_{s,s} \mathrm{d} W_s. 
\end{equation}
\end{lemma}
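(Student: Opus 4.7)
The plan is to construct the version $\{y_{s,t}\}$ directly from the hypotheses and then derive \eqref{Eq-Etyt} by combining the Clark--Ocone formula with a stochastic Fubini argument. First I would define
$$
y_{s,t} := D_s y_0 + \int_0^t D_s \dot y_r \, \mathrm{d} r, \qquad (s,t) \in [0,1]^2,
$$
which makes sense because $y_0 \in \mathbb{D}^{1,2}$ and $\dot y \in L^2([0,1]; \mathbb{D}^{1,2})$, and it is jointly measurable in $(s,t,\omega)$ by Fubini. By closedness of the Malliavin derivative $D$ and its commutation with Bochner integrals, for each fixed $t$ one has $y_t \in \mathbb{D}^{1,2}$ and $D_s y_t = y_{s,t}$ in $L^2(\Omega\times[0,1])$, which establishes the first property. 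For each fixed $s$, the map $t\mapsto y_{s,t}$ is absolutely continuous from $[0,1]$ into $L^2(\Omega;\mathbb{R}^d)$ with time derivative $D_s\dot y_t$, giving the second property.

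To derive \eqref{Eq-Etyt}, I would apply the Clark--Ocone representation to $y_t$ for each fixed $t$: since $y_t \in \mathbb{D}^{1,2}$,
$$
y_t = \mathbf{E} y_t + \int_0^1 \mathbf{E}^s D_s y_t \, \mathrm{d} W_s,
$$
so that $\mathbf{E}^t y_t = \mathbf{E} y_t + \int_0^t \mathbf{E}^s D_s y_t \, \mathrm{d} W_s$. Splitting $D_s y_t = D_s y_s + \int_s^t D_s \dot y_r \, \mathrm{d} r$ and using $\mathbf{E} y_t = \mathbf{E} y_0 + \int_0^t \mathbf{E} \dot y_r \, \mathrm{d} r$, I obtain
$$
\mathbf{E}^t y_t = \mathbf{E} y_0 + \int_0^t \mathbf{E} \dot y_r \, \mathrm{d} r + \int_0^t \mathbf{E}^s y_{s,s}\, \mathrm{d} W_s + \int_0^t \mathbf{E}^s \!\left(\int_s^t D_s \dot y_r \, \mathrm{d} r\right) \mathrm{d} W_s.
$$

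The remaining task is to show that the first and last terms on the right combine to $\int_0^t \mathbf{E}^r \dot y_r\, \mathrm{d} r$. For this I would apply Clark--Ocone to $\dot y_r$ for each $r \in [0,t]$, obtaining $\mathbf{E}^r \dot y_r = \mathbf{E} \dot y_r + \int_0^r \mathbf{E}^u D_u \dot y_r \, \mathrm{d} W_u$, and then integrate over $r\in[0,t]$ and swap the order of the deterministic and stochastic integrals by the stochastic Fubini theorem, which produces exactly $\int_0^t \mathbf{E} \dot y_r\, \mathrm{d} r + \int_0^t \mathbf{E}^s\!\left(\int_s^t D_s \dot y_r\, \mathrm{d} r\right) \mathrm{d} W_s$. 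Substituting yields \eqref{Eq-Etyt}.

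The main obstacle is the rigorous justification of the stochastic Fubini step, since the integrand $\mathbf{E}^u D_u \dot y_r$ is only guaranteed to be in $L^2(\Omega \times [0,1]^2)$ via the hypothesis $\dot y \in L^2([0,1];\mathbb{D}^{1,2})$; I would verify the standard square-integrability condition $\int_0^t \int_0^t \mathbf{E}|\mathbf{E}^u D_u \dot y_r|^2 \mathrm{d} u\, \mathrm{d} r < \infty$ so that the classical stochastic Fubini theorem applies. All other manipulations, including commuting $D_s$ with the Bochner integral defining $y_t$ and the extraction of a measurable version $y_{s,t}$, are routine consequences of the closedness of $D$ and Fubini's theorem.
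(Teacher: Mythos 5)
Your proof is correct and follows essentially the same route as the paper's: construct $y_{s,t}$ via Bochner integration, apply Clark--Ocone to $y_t$ to get $\mathbf{E}^t y_t = \mathbf{E} y_t + \int_0^t \mathbf{E}^s D_s y_t\, \mathrm{d} W_s$, split $D_s y_t = y_{s,s} + \int_s^t D_s\dot y_r\,\mathrm{d} r$, and absorb the remainder into $\int_0^t \mathbf{E}^r\dot y_r\,\mathrm{d} r$ by combining stochastic Fubini with a second Clark--Ocone applied to $\dot y_r$. The only differences are presentational: you run the Fubini/Clark--Ocone identity in the reverse direction, and the paper handles the Lebesgue-null set where $D_s y_0$ is undefined explicitly, a measure-theoretic point you flagged as routine.
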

\begin{proof}
By our condition that $y_0\in {\mathbb D}^{1,2}$ and $\dot y\in L^2([0,1]; {\mathbb D}^{1,2})$, we have $D y_0\in L^2([0,1]\times \Omega;{\mathbb R}^d)$ and the map $(s,t,\omega)\mapsto D_s\dot y_t(\omega)$ is an element in $L^2([0,1]^2\times \Omega; {\mathbb R}^d)$. By Fubini's theorem, there is a Lebesgue null set ${\mathscr N}\subseteq [0,1]$ such that for each $s\notin {\mathscr N}$, the map $t\mapsto D_s \dot y_t$ is an element in $L^2([0,1]; L^2(\Omega))$ and $D_sy_0\in L^2(\Omega)$.  For any $s\in [0,1]$, define 
\begin{equation*}
y_{s,t}=
\begin{cases}
D_s y_0+ \int_0^t D_s \dot y_r \mathrm{d} r \ &s\notin {\mathscr N}, t\in[0,1]\\
0\ & s\in {\mathscr N}, t\in[0,1]. 
\end{cases}
\end{equation*}
Obviously, for each $s\in [0,1]$, the map $[0,1]\ni t\mapsto y_{t,s}\in L^2(\Omega)$ is absolutely continuous. By our assumption 
$$
\int_{0}^{1} \|\dot y_r\|_{{\mathbb D}^{1,2}} \mathrm{d} r \leqslant \left(\int_{0}^{1} \|\dot y_r\|_{{\mathbb D}^{1,2}}^2 \mathrm{d} r\right )^{1/2} <\infty, 
$$
i.e. $\dot y: [0,1] \to {\mathbb D}^{1,2}$ is Bochner  integrable. Since $D$ is a continuous operator from ${\mathbb D}^{1,2}$ to $L^2(\Omega)$, we get   
$$
Dy_t=D y_0+ D\int_0^t \dot y_r \mathrm{d} r =D y_0+ \int_0^t D \dot y_r \mathrm{d} r. 
$$
Combining this with the definition of $y_{s,t}$, we get $y_{\cdot,t}= D_\cdot y_t$ in $L^2(\Omega;H)$ for all $t\in [0,1]$. Moreover, by our assumption, 
\begin{align*}
{\mathbf E} \int_0^1 |y_{s,s}|^2 \mathrm{d} s\leqslant& {\mathbf E} \int_0^1 |D_s y_0|^2 \mathrm{d} s+ {\mathbf E} \int_0^1 \left| \int_0^s D_s \dot y_r \, \mathrm{d} r\right|^2 \mathrm{d} s\\
\leqslant & \|D y_0\|_2 + \int_0^T\|D \dot y_r\|_{2}^2 \mathrm{d} r<\infty, 
\end{align*}
which means $y_{s,s}$ is an element of $ L^2([0,1]\times \Omega;{\mathbb R}^d)$. By Lemma \ref{Le-COF},  we have 
\begin{equation}\label{eq-Etyt}
\begin{aligned}
{\mathbf E}^t y_t=& {\mathbf E} y_t+ \int_0^t {\mathbf E}^sD_sy_t \cdot \mathrm{d}  W_s  = {\mathbf E} y_t+ \int_0^t {\mathbf E}^s y_{s,t} \cdot \mathrm{d}  W_s  \\
=& {\mathbf E} y_t  +  \int_0^t {\mathbf E}^sy_{s,s} \cdot \mathrm{d}  W_s +  \int_0^t {\mathbf E}^s (y_{s,t}-y_{s,s}) \cdot \mathrm{d}  W_s. 
\end{aligned}
\end{equation}
Note that for any $s\notin {\mathscr N}$, $t\in[0,1]$, 
\begin{align*}
y_{s,t}-y_{s,s}=  \int_s^t D_s \dot y_r \mathrm{d} r
\end{align*}
by stochastic Fubini theorem, 
\begin{align*}
 &\int_0^t {\mathbf E}^s (y_{s,t}-y_{s,s}) \cdot \mathrm{d}  W_s =  \int_0^t {\mathbf E}^s \left( \int_s^t D_s \dot y_r \mathrm{d} r\right) \cdot \mathrm{d}  W_s\\
 =& \int_0^t  \left(  \int_s^t {\mathbf E}^sD_s  \dot y_r \mathrm{d} r\right) \cdot \mathrm{d}  W_s = \int_0^t \mathrm{d} r \int_0^r {\mathbf E}^sD_s \dot y_r \cdot \mathrm{d}  W_s \\
\overset{\eqref{eq-EtF}}{=}& \int_0^t ({\mathbf E}^r \dot y_r-{\mathbf E} \dot y_r) \mathrm{d} r = \int_0^t {\mathbf E}^r \dot y_r \mathrm{d} r + {\mathbf E} y_0-{\mathbf E} y_t. 
\end{align*}
Plugging this into \eqref{eq-Etyt}, we obtain \eqref{Eq-Etyt}. 
\end{proof}

For any $F\in {\mathscr F}$ and $h\in H$,  denote 
\begin{equation}\label{Eq-D-eps}
\tau_{\varepsilon h}F(\omega):= F\left(\omega+ \varepsilon\int_0^\cdot h_s \mathrm{d} s \right ), \quad D^h_\varepsilon  F:= \frac{(\tau_{\varepsilon h}F-F )}{\varepsilon}. 
\end{equation}
The next lemma is taken from \cite{mastrolia2017malliavin}, which gives a characterization of the space ${\mathbb D}^{1,p}$ in terms of differentiability properties. 
\begin{lemma}\label{Le-criterion}
Let $p\in (1,\infty)$ and $F\in L^p(\Omega)$. The following properties are equivalent
\begin{enumerate}
\item $F\in {\mathbb D}^{1,p}$. 
\item There is ${\mathcal D} F\in L^p(\Omega; H)$ such that for any $h\in H$ and $q\in [1,p)$ 
$$\lim_{\varepsilon\to 0}{\mathbf E}|D^h_\varepsilon F - \langle {\mathcal D} F, h\rangle _{H}|^q =0. $$
\item There is ${\mathcal D} F\in L^p(\Omega; H)$ and some $q\in [1,p)$ such that for any $h\in H$ 
$$\lim_{\varepsilon\to 0}{\mathbf E}|D^h_\varepsilon F - \langle {\mathcal D} F, h\rangle _{H}|^q =0. $$
\end{enumerate}
Moreover, in that case, $D F={\mathcal D} F$. 
\end{lemma}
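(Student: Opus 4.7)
The plan is to argue around the circle $(1)\Rightarrow(2)\Rightarrow(3)\Rightarrow(1)$. The implication $(2)\Rightarrow(3)$ is tautological, so only $(1)\Rightarrow(2)$ and $(3)\Rightarrow(1)$ require work, and among these the latter is the real content. Throughout I will use the Cameron--Martin formula
\[
{\mathbf E}[G\cdot \tau_{\varepsilon h}F]={\mathbf E}\bigl[F\cdot \tau_{-\varepsilon h}G \cdot Z_{-\varepsilon h}\bigr],\qquad Z_{-\varepsilon h}:=\exp\!\Bigl(-\varepsilon\!\int_0^1\!h_s\,\mathrm{d} W_s-\tfrac{\varepsilon^2}{2}\|h\|_H^2\Bigr),
\]
together with the defining density of $\mathcal{S}_b$ in ${\mathbb D}^{1,p}$.

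For $(1)\Rightarrow(2)$, choose $F_n\in\mathcal{S}_b$ with $F_n\to F$ in $L^p$ and $DF_n\to DF$ in $L^p(\Omega;H)$. For cylinder $F_n$ the quantity $D_\varepsilon^h F_n$ is a genuine finite-dimensional difference quotient, hence converges to $\langle DF_n,h\rangle_H$ in every $L^q$ by dominated convergence. The remainder $D_\varepsilon^h(F-F_n)$ is handled by writing $\tau_{\varepsilon h}(F-F_n)$ via Cameron--Martin against a test function and noting that $\sup_{|\varepsilon|\leqslant 1}\|Z_{-\varepsilon h}\|_{L^r}<\infty$ for every $r<\infty$; H\"older's inequality with $q<p$ then gives a bound of the form $\|D_\varepsilon^h(F-F_n)\|_{L^q}\leqslant C_h\,\varepsilon^{-1}\|F-F_n\|_{L^p}\cdot(\text{something small in }\varepsilon)$, more precisely one obtains after a direct Taylor expansion $\|D_\varepsilon^h(F-F_n)\|_{L^q}\lesssim \|DF-DF_n\|_{L^p(\Omega;H)}$ uniformly in small $\varepsilon$, and a standard $3\varepsilon$-argument closes the case.

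The core of the proof is $(3)\Rightarrow(1)$. For $G\in\mathcal{S}_b$ the Cameron--Martin identity rewrites
\[
{\mathbf E}[G\cdot D_\varepsilon^h F]={\mathbf E}\!\left[F\cdot \frac{\tau_{-\varepsilon h}G\cdot Z_{-\varepsilon h}-G}{\varepsilon}\right].
\]
On the left, hypothesis $(3)$ and boundedness of $G$ yield convergence to ${\mathbf E}[G\langle \mathcal{D}F,h\rangle_H]$ along $\varepsilon\to 0$. On the right, since $G$ is smooth cylinder, a straightforward Taylor expansion of $\tau_{-\varepsilon h}G\cdot Z_{-\varepsilon h}$ gives convergence of the difference quotient in $L^{p'}$ to $-\langle DG,h\rangle_H - G\,W(h)$, where $W(h):=\int_0^1 h_s\,\mathrm{d} W_s$. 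Since $F\in L^p$, one passes to the limit to obtain the duality/integration-by-parts formula
\[
{\mathbf E}[G\,\langle \mathcal{D}F,h\rangle_H]={\mathbf E}\bigl[F\bigl(G\,W(h)-\langle DG,h\rangle_H\bigr)\bigr]={\mathbf E}[F\,\delta(Gh)]
\]
for every $G\in\mathcal{S}_b$ and every $h\in H$. This identity is precisely the classical characterization of membership in ${\mathbb D}^{1,p}$ via the closedness of $D$ and the adjointness with the divergence $\delta$: it forces $F\in{\mathbb D}^{1,p}$ with $DF=\mathcal{D}F$. Concretely, one finishes by taking finite-dimensional conditional expectations $F_n:={\mathbf E}[F\mid \sigma(W(e_1),\ldots,W(e_n))]$ relative to an orthonormal basis $(e_i)\subset H$, verifying $F_n\in{\mathbb D}^{1,p}$ with $DF_n\to\mathcal{D}F$ in $L^p(\Omega;H)$ by the duality above, and invoking closedness of $D$ to conclude.

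The main obstacle is the last step: deriving the integration-by-parts identity only yields a \emph{weak} form of differentiability, and one must upgrade this to membership in ${\mathbb D}^{1,p}$ (not merely $D$-differentiability in a distributional sense). The technical crux is controlling the $L^{p'}$ convergence of the Cameron--Martin difference quotient uniformly in smooth cylindrical test functions $G$, together with the closure argument via conditional expectations, which relies on $p>1$ (so that the closure of $D$ is genuinely operatorial on $L^p$).
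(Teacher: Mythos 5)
The paper does not prove this lemma; it quotes it verbatim from \cite{mastrolia2017malliavin}, so there is no in-text argument to compare against and your sketch must stand on its own. Your plan --- density plus Cameron--Martin control for $(1)\Rightarrow(2)$, integration by parts plus finite-dimensional conditional expectations and closedness of $D$ for $(3)\Rightarrow(1)$ --- is the standard route. But first, a sign error: the Girsanov density in ${\mathbf E}[G\,\tau_{\varepsilon h}F]={\mathbf E}[\tau_{-\varepsilon h}G\cdot F\cdot\mathcal{E}(\varepsilon h)]$ is $\mathcal{E}(\varepsilon h)=\exp\bigl(\varepsilon\int_0^1 h_s\,\mathrm{d} W_s-\tfrac{\varepsilon^2}{2}\|h\|_H^2\bigr)$, not $\exp\bigl(-\varepsilon\int_0^1 h_s\,\mathrm{d} W_s-\tfrac{\varepsilon^2}{2}\|h\|_H^2\bigr)$ as you wrote; the paper itself records $\mathrm{d}({\mathbf P}\circ\tau_{\theta h}^{-1})/\mathrm{d}{\mathbf P}=\mathcal{E}(\theta h)$. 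With your sign the stated limit $-\langle DG,h\rangle_H-GW(h)$ of the Cameron--Martin difference quotient is incompatible with the correct integration-by-parts identity ${\mathbf E}[G\langle\mathcal{D}F,h\rangle_H]={\mathbf E}[F(GW(h)-\langle DG,h\rangle_H)]$ that you write immediately afterwards; fixing the sign makes the two statements agree.

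The substantive gap is in $(3)\Rightarrow(1)$: you derive the (weak) integration-by-parts identity but leave the upgrade to ${\mathbb D}^{1,p}$-membership essentially unexplained. The reduction $F_n:={\mathbf E}[F\mid\sigma(W(e_1),\dots,W(e_n))]$ together with closedness of $D$ is indeed the right mechanism, but the two steps you skip are precisely the ones that carry the load: (i) showing $F_n\in{\mathbb D}^{1,p}$ with $DF_n=P_n{\mathbf E}[\mathcal{D}F\mid\sigma(W(e_1),\dots,W(e_n))]$, which requires first restricting the IBP to $\sigma(W(e_1),\dots,W(e_n))$-measurable cylinder test functions and $h\in\mathrm{span}(e_1,\dots,e_n)$, and then invoking the classical finite-dimensional result that an $L^p$ weak gradient against the standard Gaussian measure on ${\mathbb R}^n$ places the function in the closure of $C^\infty_b$ --- without this finite-dimensional weak-equals-strong derivative theorem the argument does not close; and (ii) showing $DF_n\to\mathcal{D}F$ in $L^p(\Omega;H)$, which needs vector-valued martingale convergence together with strong convergence $P_n\to\mathrm{Id}$ on $H$. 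Your closing remark that the crux is ``controlling the $L^{p'}$ convergence of the Cameron--Martin difference quotient uniformly in $G$'' misdiagnoses the difficulty: no uniformity in $G$ is required to obtain the IBP identity; the real engine is the finite-dimensional reduction feeding into closedness of $D$. (In $(1)\Rightarrow(2)$ the uniform bound $\|D^h_\varepsilon(F-F_n)\|_{L^q}\lesssim\|D(F-F_n)\|_{L^p(\Omega;H)}$ is not a Taylor expansion but rests on the identity $D^h_\varepsilon G=\varepsilon^{-1}\int_0^\varepsilon\tau_{\theta h}\langle DG,h\rangle_H\,\mathrm{d}\theta$ together with Girsanov; the paper uses this identity as \eqref{eq-DhF}, but extending it from $\mathcal{S}_b$ to all of ${\mathbb D}^{1,p}$ is itself a small approximation step worth a sentence.)
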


Denote $\varDelta_T=\{(s,t): 0\leqslant s\leqslant t\leqslant T\}$, $\varDelta=\varDelta_1$. We need the following
\begin{assumption}\label{Aspt2}
 For each $(x,t)\in Q$, $a_t(x), b_t(x), c_t(x)$ are Malliavin differentiable and each of the random fields $ D_sa_t(x),  D_sb_t(x), D_sc_t(x)$ has a continuous version as a map from ${\mathbb R}^n\times \varDelta$ to $L^{2p}(\Omega)$ such that 
 \begin{align}\label{Eq-con3}
\begin{aligned}
\sup_{(s,t)\in \varDelta} \Big(\|D_sa_t\|_{C^\alpha({\mathbb R}^n;L^{2p}(\Omega))}&+ \|D_sb_t\|_{ C^\alpha({\mathbb R}^n;L^{2p}(\Omega))}\\
& +\|D_sc_t\|_{C^\alpha({\mathbb R}^n;L^{2p}(\Omega))}\Big) \leqslant \Lambda'<\infty. 
\end{aligned}
\tag{\bf H$_3$}
\end{align} 
\end{assumption}

The next Theorem is the key to the main purpose of this paper. 
\begin{theorem}\label{Th-BSPDE-Holder}
Let $T\in (0,1]$, $q>2p\geqslant 4$ and $C^{
\beta}_{x,t}=C^{\beta, 0}_{x,t}(Q_T; L^{p}(\Omega))$. Under Assumption \ref{Aspt1} and \ref{Aspt2}, the following BSPDE 
\begin{equation}\label{Eq-BSPDE}
u_t(x)= \int_t^T (L_s u_s+ f_s)(x)\mathrm{d} s- \int_t^T v_s(x)\cdot \mathrm{d}  W_s
\end{equation}
has an ${\mathscr F}_t$-adapted solution $(u,v)\in C^{2+\alpha}_{x,t}\times C^{2+\alpha}_{x,t}$, provided that $f\in C^{\alpha,0}_{x,t}(Q_T; L^q(\Omega))$ and $Df\in C^{\alpha,0}_{x,t}(Q_T; L^{2p}(\Omega; H))$. Moreover, there is a constant $C$ only depends on $n, d, p, q, \alpha,\Lambda, \Lambda'$ such that 
\begin{align*}
\|u\|_{C^{2+\alpha}_{x,t}}+\|v\|_{C^{2+\alpha}_{x,t}}\leqslant C \left( \|f\|_{C^{\alpha,0}_{x,t}(Q_T; L^{q}(\Omega))}+\sup_{(s,t)\in \varDelta_T} \|D_sf_t \|_{C^\alpha({\mathbb R}^n; L^{2p}(\Omega;{\mathbb R}^d))}\right ).
\end{align*} 
\end{theorem}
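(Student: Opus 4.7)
The existence of an adapted pair $(u,v)$ with $u\in C^{2+\alpha}_{x,t}$ and $v\in C^\alpha({\mathbb R}^n;L^p(\Omega;H))$ already follows from Lemma \ref{Le-BSPDE} (since $q>p$ we have $f\in C^\alpha_{x,t}(Q_T;L^p(\Omega))$). The content of the theorem is therefore to upgrade the spatial regularity of $v$ from $C^\alpha$ to $C^{2+\alpha}$. My strategy is to express $v_s(x)$ in terms of a Malliavin derivative of the solution $w$ to the random PDE \eqref{Eq-RPDE}, and then to apply Theorem \ref{Th-RPDE} to the linearized equation satisfied by that Malliavin derivative.

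The first step is to apply Theorem \ref{Th-RPDE} with Banach space ${\mathcal B}=L^q(\Omega)$ to \eqref{Eq-RPDE}; this yields a solution $w\in C^{2+\alpha,0}_{x,t}(Q_T;L^q(\Omega))$ satisfying $\|w\|_{C^{2+\alpha}(L^q)}\leq C\|f\|_{C^\alpha(L^q)}$. For each $s\in[0,T]$ I then set $Y^s_t(x):=D_sw_t(x)$ for $t\in[s,T]$ and, by applying $D_s$ to the identity $w_t(x)=\int_t^T(L_rw_r+f_r)(x)\,dr$ (exploiting that $D_s$ commutes with $\int_t^T$ when $s\leqslant t$), derive the linear backward random PDE
\[Y^s_t(x)=\int_t^T\bigl[L_rY^s_r+(D_sL_r)w_r+D_sf_r\bigr](x)\,dr,\qquad t\in[s,T],\]
where $(D_sL_r)g:=(D_sa_r^{ij})\partial_{ij}g+(D_sb_r^i)\partial_ig+(D_sc_r)g$. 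The operator $L$ still satisfies Assumption \ref{Aspt1}, so Theorem \ref{Th-RPDE} applies on $[s,T]$ with target Hilbert space ${\mathcal H}={\mathbb R}^d$ and Banach space ${\mathcal B}=L^p(\Omega;{\mathbb R}^d)$.

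The crucial estimate for the forcing $F^s_r(x):=(D_sL_r)w_r(x)+D_sf_r(x)$ uses H\"older's inequality with dual exponents $(2p,2p)$: for each $(r,x)$,
\[\|(D_sa_r)(x)\partial_{ij}w_r(x)\|_{L^p(\Omega)}\leq\|D_sa_r(x)\|_{L^{2p}(\Omega)}\cdot\|\partial_{ij}w_r(x)\|_{L^{2p}(\Omega)},\]
and since $q>2p$ one has $L^q\hookrightarrow L^{2p}$, so $\|\partial_{ij}w\|_{L^{2p}}$ is controlled by the step-one bound on $\|w\|_{C^{2+\alpha}(L^q)}$. Handling the $C^\alpha$-seminorms in $x$ and the lower-order terms identically, and invoking \eqref{Eq-con3}, one obtains $\sup_s\|F^s\|_{C^\alpha_{x,t}(L^p(\Omega;{\mathbb R}^d))}\leq C(\|f\|_{C^\alpha(L^q)}+\sup_{(s,t)\in\varDelta_T}\|D_sf_t\|_{C^\alpha(L^{2p})})$, and Theorem \ref{Th-RPDE} then gives the same bound for $\sup_s\|Y^s\|_{C^{2+\alpha,0}_{x,t}(Q_{[s,T]};L^p(\Omega;{\mathbb R}^d))}$. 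To conclude, I would apply Lemma \ref{Le-Ety} to $y_t:=w_t(x)$ (with $\dot y_r=-(L_rw_r+f_r)(x)$ and $y_{s,t}=Y^s_t(x)$); comparing the resulting It\^o decomposition of $u_t={\mathbf E}^tw_t$ with the BSPDE and using ${\mathbf E}^s(L_sw_s+f_s)=L_su_s+f_s$ (because $a,b,c,f$ are ${\mathscr F}_s$-measurable and the spatial operators commute with ${\mathbf E}^s$), uniqueness of martingale representation forces $v_s(x)={\mathbf E}^sY^s_s(x)$. Since ${\mathbf E}^s$ is an $L^p$-contraction that commutes with spatial derivatives, the uniform-in-$s$ $C^{2+\alpha}(L^p)$-bound on $x\mapsto Y^s_s(x)$ transfers to $v_s$.

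The hard part is the rigorous justification of the Malliavin differentiability of $w_t(x)$ and of the PDE for $Y^s$. I would handle it by approximating $a,b,c,f$ by inputs that are smooth functionals of finitely many Gaussian coordinates (for instance by projecting onto a finite-dimensional chaos), for which the corresponding solutions $w^n$ are classically Malliavin differentiable and the linearized PDE for $D_sw^n$ is immediate from the classical chain rule. Since the uniform $C^{2+\alpha}$-bound on $Y^{s,n}$ derived above is stable under the approximation, Lemma \ref{Le-criterion} together with the closedness of $D$ identifies $D_sw_t$ with the limit $Y^s_t$ and also verifies the hypotheses of Lemma \ref{Le-Ety}, namely $w_0(x)\in{\mathbb D}^{1,2}$ and that $r\mapsto(L_rw_r+f_r)(x)$ belongs to $L^2([0,T];{\mathbb D}^{1,2})$.
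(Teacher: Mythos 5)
Your overall architecture matches the paper's proof closely: step one applies Theorem~\ref{Th-RPDE} with ${\mathcal B}=L^q(\Omega)$ to get $w\in C^{2+\alpha}_{x,t}(Q_T;L^q(\Omega))$, then linearizes by Malliavin-differentiating the equation, applies Theorem~\ref{Th-RPDE} again (with ${\mathcal B}=L^p(\Omega;{\mathbb R}^d)$) to the resulting backward PDE for $Y^s=D_sw$, and finally invokes Lemma~\ref{Le-Ety} together with the Clark--Ocone identity to identify $v_s(x)={\mathbf E}^s Y^s_s(x)$ and transfer the uniform-in-$s$ $C^{2+\alpha}(L^p)$ bound through the ${\mathbf E}^s$-contraction. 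The H\"older exponent bookkeeping with $(2p,2p)$ and the embedding $L^q\hookrightarrow L^{2p}$ for $q>2p$ are exactly the paper's Step~2.

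The one place you genuinely diverge is the hard part you defer to the last paragraph: proving $w_t(x)\in{\mathbb D}^{1,p}$ and that $Dw$ solves the linearized equation. The paper does this directly by studying the Cameron--Martin difference quotients $D^h_\varepsilon w$: it writes the PDE satisfied by $D^h_\varepsilon w$, controls the shifted coefficients via Girsanov's theorem, extracts uniform $C^{2+\alpha}$ bounds from Theorem~\ref{Th-RPDE}, and then uses a spatial cutoff plus Arzel\`a--Ascoli to pass to the limit; this is precisely the setting in which Lemma~\ref{Le-criterion} applies and gives $Dw={\mathcal D}w$. Your proposal to instead project $a,b,c,f$ onto a finite Wiener chaos has an obstruction you do not address: chaos truncation generally destroys the pointwise ellipticity bound \eqref{Eq-con2} and the a.s.\ H\"older bound \eqref{Eq-con1} on $a$ (polynomial truncations of a bounded random variable are not sign- or bound-preserving), so the truncated coefficients need not satisfy Assumption~\ref{Aspt1} and Theorem~\ref{Th-RPDE} would not apply to the approximating equations. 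One could repair this by conditioning on a finite filtration (Jensen preserves \eqref{Eq-con1}--\eqref{Eq-con2}) and mollifying in the Gaussian variables, but that is not what you wrote; and the concluding appeal to Lemma~\ref{Le-criterion} does not fit the chaos-truncation scheme at all --- that lemma characterizes ${\mathbb D}^{1,p}$ through the Cameron--Martin difference quotients $D^h_\varepsilon$, which is exactly the paper's device, not yours. So either follow the paper's $D^h_\varepsilon$-route, or replace the chaos projection by a structure-preserving approximation and close the argument via the closedness of $D$ alone.
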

\begin{proof}
We divide the proof into four steps. 

{\em Step 1.} Let 
$$
\Lambda_f:= \|f\|_{C^{\alpha,0}_{x,t}(Q_T; L^{q}(\Omega))}+\sup_{(s,t)\in \varDelta_T} \|D_sf_t \|_{C^\alpha({\mathbb R}^n; L^{2p}(\Omega;{\mathbb R}^d))} 
$$
and $w$ be the unique solution to equation \eqref{Eq-RPDE} in $C^{2+\alpha,0}_{x,t}(Q_T; L^q(\Omega))$. Below we show that for each $(x,t)$, $w_t(x)$ is Malliavin differentiable, and $Dw$ satisfies the following $L^p(\Omega; H)$-valued equation: 
\begin{equation}\label{Eq-Dw}
Dw_t=\int_t^T (L_r D w_r+ G_r) \mathrm{d} r, 
\end{equation}
where $G_r=Df_r+(\partial_{ij}w_r Da^{ij}_r+\partial_i w^i_r D b^i_r + w_r\cdot Dc_r)$. To do this, we consider the following $L^p(\Omega; H)$-valued PDE, 
\begin{equation}\label{eq-ddw}
{\mathcal D} w_t=\int_t^T L_r ({\mathcal D} w_r) \mathrm{d} r + \int_t^T G_r\,  \mathrm{d} r =0. 
\end{equation}
By Assumption \ref{Aspt1}, \ref{Aspt2}  and Theorem \ref{Th-RPDE}, we get 
$$\|w\|_{C^{2+\alpha,0}_{x,t}(Q_T; L^{q}(\Omega))}\leqslant C\|f\|_{C^{\alpha,0}_{x,t}(Q_T; L^{q}(\Omega))}\leqslant C \Lambda_f,$$
$$
\sum_{i,j}\|D a^{ij}\|_{C^{\alpha, 0}_{x,t}(Q_T; L^{2p}(\Omega; H))}+\sum_{i}\|Db^i\|_{C^{\alpha, 0}_{x,t}(Q_T; L^{2p}(\Omega; H))}+\|Dc\|_{C^{\alpha, 0}_{x,t}(Q_T; L^{2p}(\Omega; H))}<\infty.  
$$
Recalling that $q>2p\geq 4$, H\"older's inequality yields, 
$$
\|G\|_{C^{\alpha,0}_{x,t}(Q_T; L^{p}(\Omega; H))}\leqslant C\Lambda_f. 
$$
Due to Theorem \ref{Th-RPDE} (with ${\mathcal H}=H$ therein), there is a unique solution ${\mathcal D} w\in C^{2+\alpha, 0}_{x,t}(Q_T; L^p(\Omega; H))$ sloves \eqref{eq-ddw}. Thus, for any $h\in H$, ${\mathcal D}^hw_t: = \langle {\mathcal D} w_t, h\rangle $ satisfies 
\begin{equation}\label{Eq-dhw}
{\mathcal D}^hw_t-\int_t^T L_r ({\mathcal D}^hw_r) \mathrm{d} r=\int_t^T  \langle G_r, h\rangle  \,  \mathrm{d} r 
\end{equation}
and 
\begin{align}\label{eq-Dhw}
\|{\mathcal D}^h w\|_{C^{2+\alpha}_{x,t}} +\| {\mathcal D}^h \partial_t  w\|_{C^{\alpha}_{x,t}} \leqslant C |h|_H\Lambda_f . 
\end{align}
Next we show that $D^h_\varepsilon w_t(x)$ (see \eqref{Eq-D-eps} for the definition) convergence to $\mathcal{D}^hw_t(x)$ in $L^p(\Omega)$, and as a consequence, we have $\mathcal{D} w_t(x)= Dw_t(x)$.  
By the definition of $D^h_\varepsilon w$, one sees  
\begin{align}\label{eq-Dhe-w}
\begin{aligned}
&D^h_\varepsilon  w_t - \int_t^T \Big[ \tau_{\varepsilon h} a^{ij}_r \,  \partial_{ij} D^h_\varepsilon  w_r+\tau_{\varepsilon h} b^i_r \, \partial_i D^h_\varepsilon  w_r+\tau_{\varepsilon h}c_r\, D^h_\varepsilon  w_r \Big] \mathrm{d} r\\
=&\int_t^T \Big[ D^h_\varepsilon  f_r  + D^h_\varepsilon  a^{ij}_r\,  \partial_{ij} w_r+D^h_\varepsilon  b^{i}_r \partial_{i} w_r + D^h_\varepsilon  \, c_r w_r \Big] \mathrm{d} r.
\end{aligned} 
\end{align}
Noting that for any $F\in {\mathbb D}^{1, p}$ and  $h\in H$, 
\begin{equation}\label{eq-DhF}
D^h_\varepsilon  F = \frac{(\tau_{\varepsilon h}F-F )}{\varepsilon}=\varepsilon^{-1}\int_0^\varepsilon \tau_{\theta h}\, D^h F \, \mathrm{d} \theta, 
\end{equation}
we get that for any $q'\in [p, 2p)$, 
\begin{align*}
{\mathbf E} |D^h_\varepsilon  f_r(x)-D^h_\varepsilon  f_r(y)|^{q'}=&\left\|\varepsilon^{-1}\int_0^\varepsilon \tau_{\theta h}\,[ D^h f_r(x)-D^hf_r(y)]  \mathrm{d} \theta\right\|_{L^{q'}(\Omega)}^{q'}\\
\leqslant & \sup_{0\leqslant \theta\leqslant \varepsilon} \|\tau_{\theta h} (D^h f_r(x)-D^h f_r(y))\|_{L^{q'}(\Omega)}^{q'}.  
\end{align*}
Due to Girsanov theorem, 
$$
\frac{\mathrm{d} {\mathbf P}\circ \tau_{\theta h}^{-1}}{\mathrm{d} {\mathbf P}}=
{\mathcal E}(\theta h): = \exp\left(\theta\int_0^T h_r\mathrm{d} W_r- \frac{\theta^2}{2} \int_0^T |h_r|^2\mathrm{d} r\right ).  
$$
Hence, 
\begin{align*}
{\mathbf E} |D^h_\varepsilon  f_r(x)-D^h_\varepsilon  f_r(y)|^{q'}\leqslant & \sup_{0\leqslant \theta \leqslant \varepsilon} {\mathbf E} [|D^h f_r(x)-D^h f_r(y)|^{q'} {\mathcal E}(\theta h)]\\
\leqslant & \sup_{0\leqslant \theta \leqslant \varepsilon} {\mathbf E} [|D^h f_r(x)-D^h f_r(y)|^{2p}]^{\frac{q'}{2p}} \cdot {\mathbf E}[ {\mathcal E}^{\frac{q}{2p-q'}}(\theta h)]^{1-\frac{q'}{2p}} \\
\leqslant& C \|Df_r\|_{C^\alpha({\mathbb R}^n; L^{2p}(\Omega; H))}^{q'} |h|_H^{q'} |x-y|^{\alpha q'},   
\end{align*}
where we use the following fact in the last inequality: 
$$
{\mathbf E} {\mathcal E}^\kappa(\theta h) = {\mathbf E} {\mathcal E}(\kappa\theta h) \exp\left(\frac{\kappa^2-\kappa}{2}|h|_{H}^2\right )\leqslant C_\kappa. 
$$ 
Thus, 
$$
\sup_{\varepsilon\in (0,1)} \| D^h_\varepsilon  f\|_{C^{\alpha,0}_{x,t}(Q_T; L^{q'}(\Omega))} \leqslant C |h|_H \|Df\|_{_{C^{\alpha,0}_{x,t}(Q_T; L^{2p}(\Omega; H))} }. 
$$
Similarly, for any $q''\in (1, 2p)$, 
\begin{align*}
\sup_{\varepsilon\in (0,1) } \Big[ \| D^h_\varepsilon  a\|_{C^{\alpha,0}_{x,t}(Q_T; L^{q''}(\Omega))}&+\| D^h_\varepsilon  b\|_{C^{\alpha,0}_{x,t}(Q_T; L^{q''}(\Omega))} \\
&+ \| D^h_\varepsilon  \, c_r\|_{C^{\alpha,0}_{x,t}(Q_T; L^{q''}(\Omega))}\Big] \leqslant C. 
\end{align*}
Choosing $q'=p$ and $q''=\frac{pq}{q-p}\in(p, 2p)$, and noticing that $\|w\|_{C^{2+\alpha,0}_{x,t}(Q_T; L^{q}(\Omega))}\leqslant C\Lambda_f$, by H\"older's inequality, we get 
\begin{equation}\label{eq-unibdd-1}
\begin{split}
&\sup_{\varepsilon\in (0,1)} \left\| D^h_\varepsilon  f  + D^h_\varepsilon  a^{ij}\,  \partial_{ij} w+D^h_\varepsilon  b^{i} \partial_{i} w + D^h_\varepsilon  \, c w \right\|_{C^{\alpha,0}_{x,t}(Q_T; L^{p}(\Omega))}\leqslant C|h|_H\Lambda_f. 
\end{split}
\end{equation}
Since $\tau_{\varepsilon h} a, \tau_{\varepsilon h} b, \tau_{\varepsilon h} c$ satisfy \eqref{Eq-con1} and \eqref{Eq-con2},  by \eqref{eq-Dhe-w}, \eqref{eq-unibdd-1} and Theorem \ref{Th-RPDE}, we have 
\begin{align}\label{eq-dhw-uni}
\sup_{\varepsilon\in (0,1)} \left(\|D^h_\varepsilon  w\|_{C^{2+\alpha}_{x,t}} +\| D^h_\varepsilon \partial_t  w\|_{C^{\alpha}_{x,t}} \right )\leqslant C|h|_H\Lambda_f.  
\end{align}
Let $\delta^{h}_\varepsilon w:= D^h_\varepsilon  w- {\mathcal D}^h w$. Next we want to prove $\delta^h_\varepsilon w_t(x)\to 0$ in $L^p(\Omega)$, for each $(x,t)\in Q_T$.  By definition, 
\begin{align}\label{eq-dhe-w}
\begin{aligned}
\partial_t\delta^{h}_\varepsilon w&+ L_t \delta^{h}_\varepsilon w = -(D^h_\varepsilon f- D^h f) \\
&- \left[ (D^h_\varepsilon  a^{ij}- D^h a^{ij}) \partial_{ij} w+(D^h_\varepsilon  b^{i}- D^h b^{i}) \partial_{i} w+ (D^h_\varepsilon  c- D^h c)  w\right ]   \\
&- \varepsilon \left( D^h_\varepsilon  a^{ij}\, \partial_{ij} D^h_\varepsilon  w+D^h_\varepsilon  b^{i}\, \partial_{i} D^h_\varepsilon  w+ D^h_\varepsilon  c\,D^h_\varepsilon  w \right )  =: -\sum_{i=1}^3F^{\varepsilon,i}
\end{aligned}
\end{align}
i.e. $\delta^{h}_\varepsilon w$ is a $L^p(\Omega)$-valued solution to \eqref{Eq-RPDE} with $f$ replaced by $F_t^\varepsilon:=\sum_{i=1}^3F_t^{\varepsilon,i}$. Estimates \eqref{eq-Dhw} and \eqref{eq-dhw-uni} yield 
\begin{align}\label{eq-Dhw-uni}
\sup_{\varepsilon\in (0,1)} \left(\|\delta^{h}_\varepsilon w\|_{C^{2+\alpha}_{x,t}} +\| \partial_t  \delta^{h}_\varepsilon w\|_{C^{\alpha}_{x,t}} \right )\leqslant C|h|_H\Lambda_f. 
\end{align}
By  \eqref{eq-dhe-w}, for each $R>0$, we have 
\begin{align*}
\partial_t (\delta^h_\varepsilon w \chi_R) &+ L_t (\delta^h_\varepsilon w\chi_R)+  F^{\varepsilon}\chi_R\\
&-(2a^{ij}\partial_{i}\delta^h_\varepsilon w\partial_j\chi_R+\delta^h_\varepsilon w a_t^{ij}\partial_{ij}\chi_R+\delta^h_\varepsilon wb^i_t\partial_i  \chi_R)=0,  
\end{align*} 
where $\chi_R(x)=\chi(x/R)$. Due to our assumptions and \eqref{eq-Dhw-uni}, 
$$
\left\|(2a^{ij}\partial_{i}\delta^h_\varepsilon w\partial_j\chi_R+\delta^h_\varepsilon w a_t^{ij}\partial_{ij}\chi_R+\delta^h_\varepsilon wb^i_t\partial_i  \chi_R)\right\|_{C^{\alpha}_{x,t}} \leqslant C|h|_H\Lambda_f /R. 
$$
So by Theorem \ref{Th-RPDE}, for any $\alpha'\in (0,\alpha)$, 
\begin{equation}\label{eq-dw}
\|\delta^h_\varepsilon w \chi_R\|_{C^{2+\alpha'}_{x,t}}\leqslant C \|F^{\varepsilon}\chi_R\|_{C^{\alpha'}_{x,t}}+ C|h|_H\Lambda_f/R.  
\end{equation}
Thanks to Lemma \ref{Le-criterion}, for each $(x,t)\in Q_T$, $F^{\varepsilon, 1}_t(x) = D^h_\varepsilon  f_t(x)-D^hf_t(x) \overset{L^{2p}(\Omega)}{\longrightarrow} 0$. By \eqref{eq-DhF} and the continuity of $Df: Q_T\mapsto L^{2p}(\Omega; H)$, one can verify that the map $Q_T\ni (x,t)\mapsto D^h_\varepsilon f_t(x)\in L^p(\Omega)$ is equivalent continuous. So by Arzela-Ascoli theorem, for any sequence $\varepsilon_n\to 0(n\to\infty)$, there exists a subsequence $\varepsilon_{n_k}\to 0(k\to\infty)$ such that for all $R>0$, $F^{\varepsilon_{n_k}, 1}\chi_R\to 0$ in $C^{\alpha'}_{x,t}$ with some $\alpha'\in (0,\alpha)$. Similarly, we have $F^{\varepsilon_{n_k}, 2}\chi_R\to 0$ and $F^{\varepsilon_{n_k}, 3}\chi_R\to 0$ in $C^{\alpha'}_{x,t}$ as $k\to \infty$. Thus, $\limsup_{\varepsilon\to0} \|F^\varepsilon\chi_R\|_{C^{\alpha'}_{x,t}}=0$. So by \eqref{eq-dw}, for any $R_0>0$, 
$$
\limsup_{\varepsilon\to0} \|\delta_\varepsilon^h w\chi_{R_0}\|_{C^{2+\alpha'}_{x,t}}\leqslant \lim_{R\to\infty}\limsup_{\varepsilon\to0} \|\delta_\varepsilon^h w\chi_R\|_{C^{2+\alpha'}_{x,t}} \leqslant \lim_{R\to\infty}C/R= 0, 
$$
which of course implies $D^h_\varepsilon  w_t(x)-{\mathcal D}^h w_t(x)\to 0$ in $L^p(\Omega)$. Again by Lemma \ref{Le-criterion}, for each $(x,t)\in Q_T$, we have 
$w_t(x)\in {\mathbb D}^{1,p}$ and $Dw_t(x)={\mathcal D} w_t(x)\in C^{2+\alpha,0}_{x,t}(Q_T; L^p(\Omega; H))$. Estimate \eqref{Eq-Dw} follows by the definition of ${\mathcal D} w$.

\medskip

{\em Step 2.} For any $(s,t)\in \varDelta_T$, let $w^s_t(x)$ be the solution to the following equation 
\begin{equation}
w^s_t=\int_t^T (L_r w^s_r+g^s_r) \mathrm{d} r, 
\end{equation}
where $g^s_r := (D_sa^{ij}_r)\partial_{ij}w_r+(D_sb^i_r) \partial_iw_r+(D_s c_r) w_r+D_sf_r$. 
By H\"older's inequality, 
\begin{align*}
\|g^s\|_{C^{\alpha}_{x,t}} \leqslant & \|D_sf \|_{C^{\alpha,0}_{x,t}(Q_T; L^{p}(\Omega))}+  \|w\|_{C^{2+\alpha,0}_{x,t}(Q_T; L^{2p}(\Omega))} \Big (\sum_{ij}\|D_sa^{ij} \|_{C^{\alpha,0}_{x,t}(Q_T; L^{2p}(\Omega))}\\
&+ \sum_{i}\|D_sb^i \|_{C^{\alpha,0}_{x,t}(Q_T; L^{2p}(\Omega))} +\|D_sc \|_{C^{\alpha,0}_{x,t}(Q_T; L^{2p}(\Omega))}\Big) \\
\leqslant & C  \|f\|_{C^{\alpha,0}_{x,t}(Q_T; L^{q}(\Omega))} + C \sup_{(s,t)\in \varDelta_T} \|D_sf_t \|_{C^{\alpha}({\mathbb R}^n; L^{p}(\Omega))}\leqslant C \Lambda_f. 
\end{align*}
Theorem \ref{Th-RPDE} yields, 
\begin{align}\label{eq-ws-norm}
\begin{aligned}
\sup_{s\in[0,T]} \left( \|\partial_t w^s\|_{C^{\alpha}_{x,t}}+\|w^s\|_{C^{2+\alpha}_{x,t}} \right )\leqslant  C \|g^s\|_{C^{\alpha}_{x,t}}\leqslant C \Lambda_f.
\end{aligned}
\end{align}

\medskip

{\em Step 3.} In this step, we prove that $w^s_t(x)$ constructed in {\em Step 2} is a  version of $D_sw_t(x)$. Let 
$$
{\mathscr A}^\alpha=\left\{w: w\in C^{2+\alpha}_{x,t}, \partial_t w\in C^{\alpha}_{x,t}\right\}, \quad \|w\|_{{\mathscr A}^\alpha}:=  \|w\|_{C^{2+\alpha}_{x,t}}+ \|\partial_t w\|_{C^{\alpha}_{x,t}}. 
$$
By linearity and Theorem \ref{Th-RPDE}, the solution map of \eqref{Eq-RPDE}
$$
{\mathcal T}: C^\alpha_{x,t} \ni f\mapsto w \in {\mathscr A}^\alpha
$$ 
is Lipschitz continuous. Since $[0,T]\ni s\mapsto g^s\in C^{\alpha}_{x,t}$ is measurable,  $s\mapsto w^s$ is measurable from $[0,T]$ to ${\mathscr A}^\alpha$. For any $\varphi \in C^\infty_c((0,T); {\mathbb R}^d)$, define 
$$
w^\varphi = \int_0^T \varphi(s) \cdot w^s \mathrm{d} s, \quad g^\varphi = \int_0^T \varphi(s) \cdot g^s \mathrm{d} s. 
$$
Then, one sees that $w^\varphi$ satisfies 
$$
w^\varphi_t=\int_t^T (L_r w^\varphi_r+g^\varphi_r) \mathrm{d} r. 
$$
On the other hand, notice that $Dw$ is the unique solution to \eqref{Eq-Dw}, we have 
$$
\langle \varphi, Dw_t\rangle _H= \int_t^T (L_r \langle \varphi,  Dw_r\rangle _H+\langle \varphi, g_r\rangle _H) \mathrm{d} r=\int_t^T (L_r \langle \varphi,  Dw_r\rangle _H+g^\varphi_r) \mathrm{d} r. 
$$
So $w^\varphi = \langle \varphi, Dw\rangle $, which implies $s\mapsto w^s$ is a version of $Dw$.  

{\em Step 4.} In this step, we prove the $C^{2+\alpha}$ regularity estimate for $v$. Define $u_t(x)={\mathbf E}^t w_t(x)$. Theorem \ref{Th-RPDE} and Lemma \ref{Le-con-norm} yield 
$$
\|u\|_{C^{2+\alpha}_{x,t}}\leqslant \|w\|_{C^{2+\alpha}_{x,t}} \leqslant C \|f\|_{C^{\alpha}_{x,t}}\leqslant C \Lambda_f. 
$$
Let $\dot w_t(x):= -[L_t w_t(x)+f_t(x)]$, by {\em Step 1}, $\dot w \in C^{\alpha,0}_{x,t}(Q_T; {\mathbb D}^{1,p})$. Note that 
$$
w_t(x)=w_0(x)+\int_0^t \dot w_s(x) \mathrm{d} s. 
$$
Thanks to Lemma \ref{Le-Ety}, for each $(x,t)\in Q_T$, 
$$
u_t(x)={\mathbf E}^t w_t(x)={\mathbf E} w_0(x) +\int_0^t {\mathbf E}^s\dot w_s(x)\mathrm{d} s + \int_0^t {\mathbf E}^s {\mathcal W}_{s,s}(x)\cdot \mathrm{d}  W_s, 
$$
where  ${\mathcal W}_{s,t}(x)=D_s w_0(x)+\int_0^t D_s\dot w_r (x)\mathrm{d} r$ for all $(x,t)\in Q_T$ and $s\in [0,T]$ a.e.. 
Since 
\begin{align*}
{\mathcal W}_{s,s}(x)=&D_s w_0(x)+\int_0^s D_s\dot w_r (x)\mathrm{d} r= \int_s^T D_s[L_r w_r+f_r] (x)\mathrm{d} r\\
=& \int_s^T[L_r D_s w_r +g^s_r](x)\mathrm{d} r=\int_s^T [L_r w^s_r +g^s_r](x)=w^s_s(x), \ s\in [0,T]\ a.e., 
\end{align*}
we get 
\begin{align*}
u_t(x)=& u_0(x) -\int_0^t {\mathbf E}^s(L_s w_s+f_s)(x) \mathrm{d} s + \int_0^t {\mathbf E}^s w^s_s(x)\cdot \mathrm{d}  W_s\\
=&u_0(x) -\int_0^t (L_s u_s+f_s)(x) \mathrm{d} s +\int_0^t {\mathbf E}^s  w^s_s(x)\cdot \mathrm{d}  W_s. 
\end{align*}
Note $u_T(x)=0$, we have 
$$
u_0(x)= \int_0^T (L_s u_s+f_s)(x) \mathrm{d} s - \int_0^T {\mathbf E}^s  w^s_s(x)\cdot \mathrm{d}  W_s. 
$$
Combining the above two equations, we obtain 
$$
u_t(x)=\int_t^T (L_s u_s+f_s)(x) \mathrm{d} s - \int_t^T {\mathbf E}^s  w^s_s(x)\mathrm{d} W_s 
$$
Let $v_s(x)=w^s_s(x)$, then the above identity implies $(u_t,v_t)=({\mathbf E}^tw_t, {\mathbf E}^tw^t_t)$ is a solution to \eqref{Eq-BSPDE}. Moreover,  
\begin{align*}
\|v\|_{C^{2+\alpha}_{x,t}}=&\sup_{0\leqslant t\leqslant T}\|{\mathbf E}^tw_t^t\|_{C^{2+\alpha}({\mathbb R}^n; L^p(\Omega))}\leqslant  \sup_{s\in [0,T]} \|w^s\|_{C^{2+\alpha}_{x,t}}\overset{\eqref{eq-ws-norm}}{\leqslant} C  \Lambda_f<\infty. 
\end{align*}
So we complete our proof. 
\end{proof}

Let $\varrho\in C_c^\infty({\mathbb R}^n)$ satisfying $\int \varrho =1$, and $\varrho_m (x):=m^n \varrho(m x)$. For any function $g: {\mathbb R}^n\to {\mathbb R}^m$, set $g^m:= g*\rho_m$. 

The following corollary of Theorem \ref{Th-BSPDE-Holder} is standard. 
\begin{corollary}[Stability]\label{Cor-St} 
Assume $a, b,c$ satisfy Assumption \ref{Aspt1} and \ref{Aspt2}. Let $w^m_t$ (respectively $(u^m, v^m)$) be the solution to \eqref{Eq-RPDE} (respectively \eqref{Eq-BSPDE}) in $C^{2+\alpha}_{x,t}$ (respectively $C^{2+\alpha}_{x,t}\times C^{2+\alpha}_{x,t}$) with $a, b,c ,f$ replaced by $a^m, b^m, c^m ,f^m$. Then for any $\beta\in(0,\alpha)$, it holds that 
\begin{align*}
\|\partial_t (w-w^m)\|_{C^\beta_{x,t}}+ \|w-w^m\|_{C^{2+\beta}_{x,t}}+ T^{-1}\|w-w^m\|_{C^{0}_{x,t}}\to 0\, (n\to \infty), 
\end{align*}
\begin{align*}
\|u-u^m\|_{C^{2+\beta}_{x,t}}+\|v-v^m\|_{C^{2+\beta}_{x,t}}\to 0 \, (n\to \infty).
\end{align*}
\end{corollary}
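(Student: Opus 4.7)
The plan is to apply the a priori estimates of Theorem \ref{Th-RPDE} and Theorem \ref{Th-BSPDE-Holder} directly to the difference equations satisfied by $w-w^m$ and $(u-u^m,v-v^m)$, after checking that the mollified coefficients inherit all hypotheses with constants independent of $m$, and that the resulting source terms converge to zero in the relevant Hölder norms. Two preliminary facts underlie the proof. First, convolution with $\varrho_m$ in $x$ does not enlarge $C^\alpha$-type seminorms and commutes with the Malliavin derivative $D$ (which acts on $\omega$), so $Da^m=(Da)^m$, $Db^m=(Db)^m$, $Dc^m=(Dc)^m$, and thus $a^m,b^m,c^m$ satisfy Assumptions \ref{Aspt1} and \ref{Aspt2} with the same constants $\Lambda,\Lambda'$. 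Second, for any $g\in C^\alpha$ with values in a Banach space and any $\beta\in(0,\alpha)$ one has $\|g-g^m\|_{C^0}\leqslant C m^{-\alpha}\|g\|_{C^\alpha}$ and $\|g-g^m\|_{C^\alpha}\leqslant 2\|g\|_{C^\alpha}$, so the standard Hölder interpolation gives $\|g-g^m\|_{C^\beta}\leqslant C m^{-(\alpha-\beta)}\|g\|_{C^\alpha}\to 0$.

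For the PDE statement, set $\delta^m:=w-w^m$; subtracting the two copies of \eqref{Eq-RPDE} yields $\partial_t\delta^m+L_t\delta^m=-R^m$ with $\delta^m_T=0$ and
\[
R^m:=(a-a^m)^{ij}\partial_{ij}w^m+(b-b^m)^i\partial_iw^m+(c-c^m)w^m+(f-f^m).
\]
Applying Theorem \ref{Th-RPDE} with exponent $\beta$ in place of $\alpha$ (legitimate since all data lie in $C^\alpha\hookrightarrow C^\beta$) bounds the left-hand side of the desired convergence by $C\|R^m\|_{C^\beta_{x,t}}$. Each summand of $R^m$ is a product (coefficient or $f$ difference)$\times$(derivative of $w^m$); the Hölder product estimate $\|fg\|_{C^\beta}\leqslant C(\|f\|_{C^\beta}\|g\|_{C^0}+\|f\|_{C^0}\|g\|_{C^\beta})$, together with the uniform-in-$m$ bound $\|w^m\|_{C^{2+\alpha}_{x,t}}\leqslant C\|f\|_{C^\alpha_{x,t}}$ furnished by Theorem \ref{Th-RPDE} and the mollification convergence recorded above, forces $\|R^m\|_{C^\beta_{x,t}}\to 0$, which yields the first assertion.

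The BSPDE claim follows the analogous template with Theorem \ref{Th-BSPDE-Holder} replacing Theorem \ref{Th-RPDE}. The pair $(u-u^m,v-v^m)$ satisfies \eqref{Eq-BSPDE} with source
\[
F^m:=(a-a^m)^{ij}\partial_{ij}u^m+(b-b^m)^i\partial_iu^m+(c-c^m)u^m+(f-f^m),
\]
and Theorem \ref{Th-BSPDE-Holder} with exponent $\beta$ controls $\|u-u^m\|_{C^{2+\beta}_{x,t}}+\|v-v^m\|_{C^{2+\beta}_{x,t}}$ by $\|F^m\|_{C^{\beta,0}_{x,t}(Q_T;L^q(\Omega))}+\sup_{(s,t)\in\varDelta_T}\|D_sF^m_t\|_{C^\beta({\mathbb R}^n;L^{2p}(\Omega;{\mathbb R}^d))}$. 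The first norm vanishes exactly as in the PDE argument, now carried out in the Bochner space $L^q(\Omega)$. The main obstacle is the second norm: expanding $D_sF^m$ by the product rule produces, for each summand, a piece where $D_s$ falls on the coefficient or $f$ difference—handled by using $D(a^m)=(Da)^m$ together with the mollification rate applied in $L^{2p}(\Omega;H)$—and a piece where $D_s$ falls on a spatial derivative of $u^m$, whose uniform bound in the Bochner--Hölder norm comes from the $C^{2+\alpha}$ estimates on $u^m$ and $Du^m$ established in Steps 1--3 of the proof of Theorem \ref{Th-BSPDE-Holder}. Hölder's inequality in $\omega$ with the exponent pairing used in that same proof under $q>2p\geqslant 4$ assembles the $L^{2p}$-valued bound; once this mixed-norm bookkeeping is completed, both convergences follow.
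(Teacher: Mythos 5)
The paper offers no written proof of this corollary (it is declared ``standard''), so there is no argument of the paper's to compare against directly; the natural route, however, is to mirror the four-step structure of the proof of Theorem~\ref{Th-BSPDE-Holder}, which the proposal does not do, and this is precisely where it runs into trouble.

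Your treatment of the RPDE part is fine: the difference $w-w^m$ solves \eqref{Eq-RPDE} with source $R^m$, the mollified coefficients keep the same constants $\Lambda,\Lambda'$ (and $D$ does commute with convolution in $x$), and interpolation gives $\|R^m\|_{C^{\beta}_{x,t}}\to 0$, so Theorem~\ref{Th-RPDE} applied at exponent $\beta$ gives the first limit. Also note that the $u$-part of the second limit then follows immediately from Lemma~\ref{Le-con-norm} (conditional Jensen) since $u_t-u^m_t={\mathbf E}^t(w_t-w^m_t)$; Theorem~\ref{Th-BSPDE-Holder} is not needed for that piece.

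The gap is in the $v$-part, where you apply Theorem~\ref{Th-BSPDE-Holder} directly to the difference pair with source $F^m=(a-a^m)^{ij}\partial_{ij}u^m+\cdots+(f-f^m)$. That theorem requires the source to lie in $C^{\alpha,0}_{x,t}(Q_T;L^q)$ with $q>2p$ \emph{and} its Malliavin derivative to lie in $C^{\alpha,0}_{x,t}(Q_T;L^{2p}(\Omega;H))$. Expanding $D_sF^m$ by the product rule produces, for instance, $(a-a^m)\,D_s\partial_{ij}u^m$. Here $a-a^m$ is only bounded in $L^\infty(\Omega)$, while Step~1 of the proof of Theorem~\ref{Th-BSPDE-Holder} only gives $Dw\in C^{2+\alpha,0}_{x,t}(Q_T;L^p(\Omega;H))$ (and hence $D_s\partial_{ij}u^m$ in $L^p$, or at best in some $L^r$ with $p<r<2p$ from the Hölder pairing $\frac1{2p}+\frac1q$), never in $L^{2p}$. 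The same defect appears in $(D_sa-D_sa^m)\partial_{ij}u^m$: the factor $D_sa-D_sa^m$ lives in $L^{2p}$ and $\partial_{ij}u^m$ in $L^q$, yielding a product only in $L^r$ with $1/r=1/(2p)+1/q$, which is strictly worse than $L^{2p}$. Your closing sentence asserting that the ``exponent pairing used in that same proof assembles the $L^{2p}$-valued bound'' is therefore incorrect: that pairing assembles an $L^p$-valued bound, not an $L^{2p}$ one, and so the hypothesis of Theorem~\ref{Th-BSPDE-Holder} cannot be verified for $F^m$. Lowering the Lebesgue exponent in the theorem to accommodate $DF^m$ would only produce convergence of $v-v^m$ in some $L^{p'}$ with $p'<p$, not in the required $C^{2+\beta,0}_{x,t}(Q_T;L^p)$.

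The correct (and genuinely ``standard'') way to obtain $\|v-v^m\|_{C^{2+\beta}_{x,t}}\to 0$ is to bypass the theorem and compare the auxiliary fields from its proof directly. Recall that $v_s=w^s_s$ where, for each $s$, $w^s$ solves an $L^p$-valued copy of \eqref{Eq-RPDE} with source $g^s_r=(D_sa^{ij}_r)\partial_{ij}w_r+\cdots+D_sf_r$, and similarly for $w^{m,s}$ and $g^{m,s}$. The difference $w^s-w^{m,s}$ satisfies an RPDE with source $(L_t-L^m_t)w^{m,s}+(g^s-g^{m,s})$; here every product lands in $L^p$ exactly as in the theorem's own Step~2 (e.g.\ $D_sa\cdot\partial_{ij}(w-w^m)$ pairs $L^{2p}$ with $L^{2p}\supset L^q$, and $(D_sa-D_sa^m)\cdot\partial_{ij}w^m$ pairs $L^{2p}$ with $L^{2p}$), and the mollification rate forces this source to vanish in $C^{\beta,0}_{x,t}(Q_T;L^p)$ uniformly in $s$. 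Theorem~\ref{Th-RPDE} (applied at exponent $\beta$, in the $L^p$-valued setting) then gives $\sup_s\|w^s-w^{m,s}\|_{C^{2+\beta}_{x,t}}\to 0$, and since $\|v-v^m\|_{C^{2+\beta}_{x,t}}\leqslant\sup_s\|w^s-w^{m,s}\|_{C^{2+\beta}_{x,t}}$ by the Jensen estimate (Lemma~\ref{Le-con-norm}), the second limit follows.
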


\section{SDEs with random singular coefficients}
In this section, we give the proof for our main result. 
\begin{proof}[Proof of Theorem \ref{Th-SDE-Main}]
We first point out that it is enough to  prove the well-posedness of \eqref{Eq-SDE} for $t\in [0,T/2]$, where $T$ is a universal constant depending only on $n, \alpha, \Lambda, p$. 

\noindent{\bf Pathwise uniqueness:} Assume $X_t$ is a solution to \eqref{Eq-SDE}. We prove the uniqueness by Zvonkin type transformation. With a little abuse of notation,  we denote $C^{\beta}_{x,t}=C^{\beta,0}_{x,t}(Q_T; L^p(\Omega; {\mathbb R}^m))$, where $m$ is an integer that can be changed in different places. Recalling that $L_t=a_t^{ij}\partial_{ij}+b^i_t\partial_i $. We consider the following BSPDE: 
 \begin{equation}\label{Bspde1}
\mathrm{d} u_t+(L_t u_t+b_t)\mathrm{d} t =v_t\cdot\mathrm{d} W_t, \quad u_T(x)=0. 
\end{equation}
By our assumptions and Theorem \ref{Th-BSPDE-Holder}, \eqref{Bspde1} has an ${\mathscr F}_t$-adapted solution $(u_t,v_t)$ and 
\begin{equation}\label{eq-u-v}
\|u\|_{C^{2+\alpha}_{x,t}} + \|v\|_{C^{2+\alpha}_{x,t}} <\infty. 
\end{equation}
Since $u_t={\mathbf E}^t w_t$, $w_t$ solves 
$$
\partial_t w+L_t w+b=0, \quad w_T(x)=0
$$
and 
$$
\mathrm{ess\,sup}_{\omega\in \Omega} \left(\sup_{t\in [0,T]}\|b_t(\cdot, \omega)\|_{C^\alpha}+\sup_{(s,t)\in\Delta_T} \|D_sb_t(\cdot, \omega)\|_{C^\alpha}\right )<\infty. 
$$
By Remark \ref{Rek-Classic}, we have 
$$
\begin{aligned}
&\mathrm{ess\,sup}_{\omega\in \Omega}  \sup_{t\in [0,T]}\left( \|w_t(\cdot, \omega)\|_{C^{2+\alpha}}+ T^{-1}\|w_t(\cdot, \omega)\|_{C^{\alpha}}\right )   \\
\leqslant & C \mathrm{ess\,sup}_{\omega\in \Omega} \sup_{t\in [0,T]} \|b_t(\cdot, \omega)\|_{C^\alpha}.  
\end{aligned}
$$
Interpolation inequality and above estimate yield 
$$
\mathrm{ess\,sup}_{\omega\in \Omega}\sup_{t\in [0,T]}\|u_t(\cdot ,\omega)\|_{C^1}\leqslant \mathrm{ess\,sup}_{\omega\in \Omega} \sup_{t\in [0,T]} \|w_t(\cdot, \omega)\|_{C^1}\leqslant C_T, 
$$
where $C_T\to0$ as $T\to0$. Below we fix $T=T(n,\alpha,\Lambda,p)>0$ so that 
$$
\mathrm{ess\,sup}_{\omega\in \Omega} \sup_{t\in [0,T]} \|u_t(\cdot, \omega)\|_{C^1}\leqslant \frac{1}{2}. 
$$
Let $\phi_t(x)=x+u_t(x)$, then
\begin{equation}\label{Eq-Phi-g}
\frac{1}{2}\leqslant \mathrm{ess\,sup}_{\omega\in \Omega}\sup_{0\leqslant t\leqslant T}\|\nabla \phi_t(x,\omega)\|_{L^\infty } \leqslant \frac{3}{2}. 
\end{equation}
So for almost surely $\omega\in \Omega$, $\phi_t(\cdot, \omega)$ is a stochastic $C^{2+\alpha}$-differential homeomorphism from ${\mathbb R}^n$ to ${\mathbb R}^n$. 
By the definition of $\phi$, 
$$
\mathrm{d} \phi_t(x)= -(L_t u_t(x)+b_t(x)) \mathrm{d} t+ v_t(x)\cdot \mathrm{d}  W_t=\mathrm{d} u_t(x)  = \mathrm{d} g_t(x)+\mathrm{d} m_t(x), 
$$
where  
\begin{equation}\label{def-g-m}
g_t(x)= -\int_0^t (L_s u_s(x)+b_s(x)) \mathrm{d} s, \quad m_t(x):= \int_0^t v_s(x) \mathrm{d} W_s. 
\end{equation}
We want to show that $\phi, g, u, v$ and $X$ are regular enough to apply the It\^o-Wenzell formula (see Lemma \ref{Le-Ito}). Since $\|v\|_{C^{2+\alpha}_{x,t}}<\infty$,  we have 
$$
\sup_{t\in [0,T]; x \neq y} \frac{{\mathbf E} |\nabla^2v_t(x)-\nabla^2v_t(y)|^p}{|x-y|^{\alpha p}}  <\infty. 
$$
Note that $p>n/\alpha$, so for any $\beta\in (n/p, \alpha)$ and $N>0$, by Garsia-Rademich-Rumsey's inequality, 
\begin{align*}
&\sup_{t\in [0,T]}{\mathbf E} \left(\sup_{x,y\in B_N} \frac{|\nabla v_t^2(x)-\nabla^2 v_t(y)|}{|x-y|^{\beta-n/p}} \right )^p \\
\leqslant &C_N  \sup_{t\in [0,T]} {\mathbf E} \left( \int_{B_N}\int_{B_N} \frac{|\nabla^2 v_t(x)-\nabla^2v_t(y)|^{p}}{|x-y|^{d+\beta p}} \mathrm{d} x\mathrm{d} y\right )\\
\leqslant &C_N \int_{B_N}\int_{B_N}|x-y|^{-d+(\alpha-\beta)p} \leqslant C_N. 
\end{align*}
Combining this and the fact that $\sup_{t\in [0,T]} {\mathbf E} |\nabla^2 v_t(0)|^p<\infty$, we get 
\begin{align*}
\sup_{t\in[0,T]} {\mathbf E} \left(\sup_{x\in B_N} |\nabla^2 v_t(x)|^p \right )<\infty, \quad \forall N>0. 
\end{align*}
Moreover, one can also prove  
\begin{equation}\label{Eq-v-C2}
\sup_{t\in[0,T]} {\mathbf E} \|v_t\|_{C^{2}(B_N)}^p<\infty , \quad \forall N>0. 
\end{equation}
Recalling that $g_t(x)$ and $m_t(x)$ are defined in \eqref{def-g-m}, let 
$$
\eta_t(x):=\int_0^t g_s(x) \mathrm{d} s \overset{\eqref{Bspde1}}{=}  u_t(x)-u_0(x)-m_t(x). 
$$ 
By Burkholder-Davis-Gundy's inequality, for each $k=0,1,2$
\begin{align*}
&{\mathbf E} \left|\nabla^k m_t(x) - \nabla^k m_t(y) \right|^p\leqslant C{\mathbf E} \left[\int_0^t |\nabla^k v_s(x)-\nabla^k v_s(y)|^k \mathrm{d} s\right ]^{\frac{p}{2}}\\
\leqslant& C{\mathbf E} \int_0^t |\nabla^k v_s(x)-\nabla^k v_s(y)|^p \mathrm{d} s\leqslant C |x-y|^{\alpha p} \|\nabla^k v\|_{C^\alpha_{x,t}}^p, 
\end{align*}
which together with \eqref{eq-u-v} implies   
\begin{align*}
\| \eta\|_{C^{2+\alpha}_{x,t}} \leqslant C\left(\|u\|_{C^{2+\alpha}_{x,t}}+ \|v\|_{C^{2+\alpha}_{x,t}}\right ). 
\end{align*}
By the definition of $\eta$, 
$$
\|\partial_t \eta\|_{C^\alpha_{x,t}}=\|g\|_{C^\alpha_{x,t}}\leqslant \|L_t u+b\|_{C^\alpha_{x,t}}\leqslant C\left( \|u\|_{C^{2+\alpha}_{x,t}}+ \|b\|_{C^\alpha_{x,t}}\right ) . 
$$
Thanks to Lemma \ref{Le-Inter}, for any $\beta\in (n/p, \alpha)$ and $\theta=\frac{1}{2}+\frac{\alpha-\beta}{2}\in (\frac{1}{2}, 1)$, we have 
$$
\|\eta\|_{C^{\theta}_tC^{1+\beta}_x} \leqslant C \|\partial_t\eta\|_{C^\alpha_{x,t}}^\theta \|\eta\|_{C^{2+\alpha}_{x,t}}^{1-\theta}. 
$$
By the same procedure of proving  \eqref{Eq-v-C2}, we have 
\begin{align*}
\left[ {\mathbf E}  \left\|\int_{t_1}^{t_2}g_s\mathrm{d} s\right\|_{C^1(B_N)}^p\right ]^{1/p} =&\left[ {\mathbf E}  \|\eta_{t_1}-\eta_{t_2}\|_{C^1(B_N)}^p\right ]^{1/p} \\
\leqslant &C_N |t_1-t_2|^{\theta}, \quad \theta\in (1/2,1). 
\end{align*}
On the other hand, ${\mathbf E} |X_{t_1}-X_{t_2}|^{p'}\leqslant C |t_1-t_2|^{\frac{p'}{2}}(p'=p/(p-1))$. 
So $\phi, g, v, X$ satisfy all the conditions in Lemma \ref{Le-Ito}. Using \eqref{Eq-IW}, we get 
\begin{align*}
\begin{aligned}
\mathrm{d} \phi_t(X_t)=&-L_t u_t(X_t)-b_t(X_t)\mathrm{d} t+v^k_t(X_t)  \mathrm{d}  W_t^k \\
&+[b^i_t(X_t)  \partial_i \phi_t(X_t) + a^{ij}_t(X_t) \partial_{ij} \phi_t(X_t)+\partial_{i} v^k_{t}(X_t)\sigma_{t}^{i k}(X_t)]\mathrm{d} t\\
&+\partial_i \phi_t(X_t) \sigma^{ik}_t(X_t) \mathrm{d}  W_t^k\\
=&\partial_i v^k_t(X_t)\sigma_t^{ik}(X_t)\mathrm{d} t+ \partial_i \phi_t(X_t)\sigma_t^{ik}(X_t)\mathrm{d} W^k_t+v_t^k(X_t)\mathrm{d}  W_t^k. 
\end{aligned}
\end{align*}
Set 
$$
Y_t =\phi_t(X_t), \quad \widetilde {b}_t(y)=\partial_iv^k_t\sigma_t^{ik}\circ \phi_t^{-1}(y)\ \mbox{ and }\  \widetilde {\sigma}_t(y)=[\nabla \phi_t \sigma_t+v_t]\circ\phi^{-1}_t(y). 
$$ 
By the above calculations, one sees that 
\begin{equation}\label{Eq-SDE-Y}
Y_t=Y_0+ \int_0^t \widetilde {b}_s(Y_s) \mathrm{d} s+\int_0^t \widetilde {\sigma}_s(Y_s)\mathrm{d}  W_s. 
\end{equation}
Thanks to Lemma  \ref{Le-measurability}, 
$\widetilde  b$ and $\widetilde  \sigma$ are ${\mathscr B}\times {\mathscr P}$-measurable. For any $x, y\in B_N$ and $t\in [0,T]$, by the definitions of $\widetilde b$ and $\widetilde \sigma$, we have 
$$
|\widetilde  b_t(0)|+|\widetilde  \sigma_t(0)|\leqslant C K_t^N, 
$$
$$
|\widetilde  b_t(x)-\widetilde  b_t(y)|+|\widetilde  \sigma_t(x)-\widetilde  \sigma_t(y)| \leqslant C K_t^N |x-y|,
$$
where $K^N_t:=  \|u_t\|_{C^2(B_N)}+\|v_t\|_{C^2(B_N)}$. It is not hard to see that $K_t^N$ is progressive measurable and satisfies 
$$
{\mathbf E} \int_0^T K_t^N\mathrm{d} t  \leqslant T \sup_{t\in [0,T]} {\mathbf E} K_t^N \overset{\eqref{Eq-v-C2}}{<}\infty. 
$$
Thanks to Theorem 1.2 of \cite{krylov1999kolmogorov}, equation \eqref{Eq-SDE-Y} admits a unique solution, which implies $X_t$ is unique up to indistinguishability. 

\medskip

\noindent {\bf Existence:} 
Let $b^m_t=b_t*\varrho_m$ and $X^m$ be the solution to 
\begin{align}\label{Eq-Xn}
X^m_t=X_0+ \int_0^t b_s^m(X^m_s) \mathrm{d} s+ \int_0^t \sigma_s(X^m_s)\mathrm{d}  W_s, \ t\in [0,T].  
\end{align}
We claim that $X^m_t$ uniform convergence on compacts in probability (ucp convergence in short) to a process $X_t$. Let $(u^m,v^m)$ be the pair of functions constructed in Theorem \ref{Th-BSPDE-Holder} satisfying  
$$
\mathrm{d} u^m_t+ \left[a_t^{ij}\partial_{ij} u^m_t+(b^m_t)^i\partial_i u_t^m+b^m_t\right ]\mathrm{d} t=v^m_t \cdot \mathrm{d} W_t. 
$$
Like before, we can find a uniform  constant $T=T(n,\alpha,\Lambda,p)>0$ such that $\|\nabla u^m_t\|_{L^\infty}\leqslant 1/2$. Define  $\phi^m_t(x):=x+u^m_t(x), \ Y^m_t:= \phi^m_t(X^m_t)$ and $Z^{m,m'}_t:= Y_t^m-Y^{m'}_t$.  Again by It\^o-Wentzell's formula, we have 
\begin{align*}
Z^{m,m'}_t=&Y^m_t-Y^{m'}_t= u^m_0(X_0)-u^{m'}_0(X_0)+\int_0^t [\widetilde  b^m_s(X^{m}_s)-\widetilde  b^{m'}_s(X^{m'}_s)] \mathrm{d} s\\
&+ \int_0^t [\widetilde  \sigma^m_s(X^{m}_s)-\widetilde  \sigma^{m'}_s(X^{m'}_s)] \mathrm{d} W_s, 
\end{align*}
where 
$$
\widetilde  b^m_t:=[\partial_i v^{m,k}_t\sigma_t^{ik}]\circ(\phi_t^m)^{-1},\quad \widetilde \sigma^m_t:=[(\nabla\phi^m_t) \sigma_t+ v_t^m] \circ(\phi_t^m)^{-1}. 
$$
By It\^o's formula, for any stopping time $\tau\leqslant T$, 
\begin{align}\label{Eq-Z2}
\begin{aligned}
\left|Z^{m,m'}_{t\wedge \tau}\right|^2=& |u^m_0(X_0)-u^{m'}_0(X_0)|^2+ 2 \int_0^{t\wedge \tau} Z^{m,m'}_s\cdot \left[\widetilde  b^m_s(Y^m_s)-\widetilde  b^{m'}_s (Y^{m'}_s)\right ] \mathrm{d} s\\
&+\int_0^{t\wedge \tau} \mathrm{tr}\left[\widetilde  \sigma_s^m(Y^m_s)-\widetilde  \sigma_s^{m'}(Y^{m'}_s)\right ]\left[\widetilde  \sigma_s^m(Y^{m}_s)-\widetilde  \sigma_s^{m'}(Y^{m'}_s)\right ]^* \mathrm{d} s +m_{t\wedge \tau}, 
\end{aligned}
\end{align}
where 
$$
m_t= 2\int_0^{t} Z^{m,m'}_s\cdot \left[\widetilde  \sigma^m_s(Y^m_s)-\widetilde  \sigma^{m'}_s (Y^{m'}_s)\right ] \mathrm{d}  W_s.
$$ 
For any $N, k\in {\mathbb N}$, let $K^{m,N}_t:=  \|u^m_t\|_{C^2(B_N)}+\|v^m_t\|_{C^2(B_N)}$, 
$$
\tau^{N, k}= \inf_{m}\, \inf\left\{t\geqslant 0:  \int_0^t (K_s^{^m, N})^2 \mathrm{d} s\geqslant  k\right\}\wedge T, 
$$
and 
$$
\sigma^{N} = \inf_{m}\,\inf\left\{t\geqslant 0:  |Y^m_t|>N/2\right\}\wedge T, \quad \sigma^{N, k}:= \sigma^N\wedge \tau^{N, k}. 
$$
For all $x, y\in B_{N/2}$ and $t\in [0, \sigma^{N,k}]$, we have 
\begin{equation}\label{eq-lip-ab}
\sup_{m\in {\mathbb N}} \left(|\widetilde b_t^m(x)-\widetilde b_t^m (y)|+|\widetilde \sigma_t^m(x)-\widetilde \sigma_t^m (y)|\right ) \leqslant C_k |x-y|. 
\end{equation}
Since for each $(x,t)\in B_{N/2}\times [0,T]$, $(\phi_t^{m})^{-1}(x)\in B_{N}$, we obtain that for any $x\in B_{N/2}$ and $t\in [0,\tau^{N,k}]$, 
\begin{align*}
&|\widetilde b_t^m(x)-\widetilde b_t^{m'}(x)| \\
\leqslant& \left|[\partial_i v^{m,k}_t\sigma_t^{ik}]\circ(\phi_t^m)^{-1}(x)-[\partial_i v^{m',k}_t\sigma_t^{ik}]\circ(\phi_t^{m})^{-1}(x)\right|\\
& + \left|[\partial_i v^{m',k}_t\sigma_t^{ik}]\circ(\phi_t^{m})^{-1}(x)-[\partial_i v^{m',k}_t\sigma_t^{ik}]\circ(\phi_t^{m'})^{-1}(x)\right|\\
\leqslant & C \|\nabla v^m_t-\nabla v^{m'}_t\|_{L^\infty(B_{N})} + C \| v^{m'}_t\|_{C^2(B_N)} |(\phi_t^{m})^{-1}(x)-(\phi_t^{m'})^{-1}(x)|\\
\leqslant & C \|v^m_t- v^{m'}_t\|_{C^2(B_{N})} + C \| v^{m'}_t\|_{C^2(B_N)} \sup_{y\in B_N}|\phi_t^{m'}(y)-\phi_t^m(y)|\\
\leqslant &C_k \left( \|u^m_t-u_t^{m'}\|_{C^2(B_N)}+\|v^m_t-v^{m'}_t\|_{C^2 (B_N)}\right ). 
\end{align*}
Similarly, for each $x\in B_{N/2}$ and $t\in [0,\tau^{N,k}]$, 
\begin{align*}
|\widetilde \sigma^m_t(x)-\widetilde \sigma^{m'}_t(x)|\leqslant C_k \left( \|u^m_t-u_t^{m'}\|_{C^2(B_N)}+\|v^m_t-v^{m'}_t\|_{C^2 (B_N)}\right )
\end{align*}
By our Theorem \ref{Th-BSPDE-Holder},  Corollary \ref{Cor-St} and the same procedure of proving \eqref{Eq-v-C2}, we have 
$$
\sup_{t\in [0,T], m\in{\mathbb N}}{\mathbf E} |K^{m,N}_t|^p=\sup_{t\in [0,T], m\in{\mathbb N}} {\mathbf E} \left(\|u^m_t\|_{C^2(B_N)}+\|v^m_t\|_{C^2(B_N)}\right )^p<\infty
$$
and 
$$
\lim_{m\to\infty} \sup_{t\in [0,T]} {\mathbf E}\left( \|u_t-u^m_t\|_{C^2(B_N)}+ \|v_t-v^m_t\|_{C^2(B_N)}\right )^p=0. 
$$
Thus,  
\begin{equation}\label{eq-lim-tau-si}
\lim_{k\to\infty} \tau^{N, k}=T, \quad \lim_{N\to\infty} \sigma^N=T
\end{equation}
and 
\begin{equation}\label{eq-m-m}
\lim_{m,m'\to\infty}{\mathbf E} \left(\|\widetilde \sigma_t^m-\widetilde \sigma_t^{m'}\|_{L^\infty(B_{N/2})}^p+\|\widetilde b_t^m-\widetilde b_t^{m'}\|_{L^\infty(B_{N/2})}^p {\mathbf{1}}_{[0,\sigma^{N,k}]}(t) \right )=0. 
\end{equation}
Let $\tau=\sigma^{N, k}$ in \eqref{Eq-Z2}. Using \eqref{eq-lip-ab}, we have 
\begin{align*}
\left|Z^{m,m'}_{t\wedge \sigma^{N, k}}\right|^2\overset{\eqref{eq-lip-ab}}{\leqslant} & \left|u^m_0(X_0)-u^{m'}_0(X_0)\right|^2\\
&+ C_k \int_0^{t\wedge \sigma^{N, k}} 
\left|Z^{m,m'}_s\right| \left(\left|Z^{m,m'}_s\right|+\| \widetilde  b^m_s-\widetilde  b^{m'}_s \|_{L^\infty(B_{N/2})} \right ) \mathrm{d} s\\
&+ C_k \int_0^{t\wedge \sigma^{N, k}} 
\left( \left|Z^{m,m'}_s\right|+\| \widetilde  \sigma^m_s-\widetilde  \sigma^{m'}_s \|_{L^\infty(B_{N/2})} \right )^2 \mathrm{d} s+ m_{t\wedge \sigma^{N,k}}\\
\leqslant & \|u^m_0-u^{m'}_0\|_{L^\infty}^2 +C_k\int_0^{t } \left|Z^{m,m'}_{s\wedge \sigma^{N,k}}\right|^2 \mathrm{d} s + m_{t\wedge \sigma^{N,k}}\\
&+ C_k\int_0^{t\wedge \sigma^{N,k}} \left(\|\widetilde  b^m_s-\widetilde  b^{m'}_s\|_{L^\infty(B_{N/2})}^2+\|\widetilde  \sigma^m_s-\widetilde  \sigma^{m'}_s\|_{L^\infty(B_{N/2})}^2 \right )\mathrm{d} s. 
\end{align*}
By Gronwall's inequality and \eqref{eq-m-m}, we get 
\begin{equation}\label{eq-Zm-Zm'}
\begin{aligned}
{\mathbf E} &\left|(Z^{m,m'})^*_{T\wedge \sigma^{N, k}}\right|^2\leqslant C_k\|u^m_0-u^{m'}_0\|_{L^\infty}^2\\
&+  C_k  {\mathbf E}\int_0^T \left(\|\widetilde  \sigma^m_s-\widetilde  \sigma^{m'}_s\|_{L^\infty(B_{N/2})}^2+\|\widetilde  b^m_s-\widetilde  b^{m'}_s\|_{L^\infty(B_{N/2})}^2  \right ){\mathbf{1}}_{[0,\sigma^{N,k}]}(s) \mathrm{d} s \\
& \overset{\eqref{eq-m-m}}{\longrightarrow} 0 \ (m,m'\to \infty). 
\end{aligned}
\end{equation}
On the other hand, 
\begin{equation}\label{eq-Xm-Xm'}
\begin{aligned}
|X^m_t-X^{m'}_t|=& |(\phi_t^m)^{-1}(\phi_t^m(X^m_t))-(\phi_t^m)^{-1}(\phi_t^m(X^{m'}_t))|\leqslant 2 |\phi_t^m(X_t^m)- \phi_t^m(X_t^{m'})| \\
\leqslant &  2|\phi^m_t(X_t^m)- \phi^{m'}_t(X_t^{m'})|+ 2| \phi^{m'}_t(X_t^{m'})- \phi_t^m(X_t^{m'})|\\
\leqslant & 2 \|u_t^m-u^{m'}_t\|_{L^\infty}+2|Y^m_t-Y^{m'}_t|. 
\end{aligned}
\end{equation}
Combining \eqref{eq-Zm-Zm'} and \eqref{eq-Xm-Xm'},   we get 
\begin{align*}
&\lim_{m,m'\to\infty}{\mathbf E} \sup_{t\in[0,T]}|X_{t\wedge \sigma^{N, k} }^m-X_{t\wedge \sigma^{N, k}}^{m'}| \\
\leqslant & 2 \lim_{m,m'\to\infty}  {\mathbf E} \sup_{t\in [0,T]}\|u_t^m-u^{m'}_t\|_{L^\infty} +2 \lim_{m,m'\to\infty} {\mathbf E} (Z^{m,m'})^*_{T\wedge \sigma^{N,k}}=0. 
\end{align*}
Noting 
$$
\lim_{N\to\infty} \lim_{k\to \infty} \sigma^{N,k}=\lim_{N\to\infty}\sigma^N\overset{\eqref{eq-lim-tau-si}}{=} T,  
$$
we obtain 
\begin{equation}\label{eq-Xmm'}
\lim_{m,m'\to\infty}  {\mathbf P}\left( \sup_{t\in [0,T/2]} |X_t^m-X_t^{m'}|>\varepsilon)\right )=0,  \quad \forall \varepsilon>0. 
\end{equation}
This implies that there is a continuous process $\{X_t\}_{t\in [0,T/2]}$ such that $X^m\to X$ in the sense of ucp. Hence, 
$$
\int_0^t \sigma_s(X_s^m)\cdot \mathrm{d}  W_s\overset{{\mathbf P}} {\longrightarrow} \int_0^t \sigma_s(X_s) \mathrm{d} W_s, \quad \forall t\in [0,T/2], 
$$
and for each $t\in [0,T/2]$ and $\varepsilon>0$, 
\begin{align*}
&{\mathbf P}\left( \left|\int_0^t b^m_s(X^m_s)\mathrm{d} s-\int_0^tb_s(X_s)\mathrm{d} s\right| >\varepsilon\right )\\
\leqslant& {\mathbf P}\left( \sup_{t\in [0,T/2]}|b^m(X_t^m)-b(X_t^m)|>\frac{\varepsilon}{2}\right )+ {\mathbf P}\left( \sup_{t\in [0,T/2]}|b_t(X^m_t)-b_t(X_t)|\mathrm{d} s  >\frac{\varepsilon}{2}\right )\\
\leqslant& {\mathbf P}\left( \|b^m-b\|_{L^\infty(Q_T)} >\frac{\varepsilon}{2}\right )+ {\mathbf P}\left( \sup_{t\in [0,T/2]}|X_t^m-X_t|^\alpha > \frac{\varepsilon}{2\Lambda} \right )\to 0, \quad (m\to \infty). 
\end{align*}
Taking limit on both side of \eqref{Eq-Xn}, one sees that $X$ is a solution to \eqref{Eq-SDE}.
\end{proof}

\section{appendix}
In this section, we give some Lemmas used in the previous sections. The following basic result is useful. 
\begin{lemma}\label{Le-Chart-Holder}
Let $f\in L^1({\mathbb R}^n; {\mathcal B})+ L^\infty({\mathbb R}^n; {\mathcal B})$. 
\begin{enumerate}
\item (Bernstein's inequality) 
For any $k=0,1,2,\cdots$, there is a constant $C=C(n,k)>0$ such that for all $j=-1,0,1,\cdots$,
\begin{align}\label{B1}
\|\nabla^k\Delta_j f\|_0 \leqslant C2^{k j}\|\Delta_jf\|_0; 
\end{align}
\item For any $\alpha\in (0,1)$, there is a constant $C=C(\alpha, n)>1$ such that 
\begin{align}\label{Ch}
C^{-1}\sup_{j\geqslant -1} 2^{j\alpha} \|\Delta_j f\|_0 \leqslant \| f\|_{C^\alpha} \leqslant C\sup_{j\geqslant -1} 2^{j\alpha} \|\Delta_j f\|_0. 
\end{align}
\end{enumerate}
\end{lemma}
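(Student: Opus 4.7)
For part (1), the plan is to exploit the support disjointness built into the Littlewood--Paley construction. Since $\widetilde\varphi \equiv 1$ on $\mathrm{supp}\,\varphi$ and $\widetilde\varphi_{-1} \equiv 1$ on $\mathrm{supp}\,\varphi_{-1}$, one has $\varphi_j = \widetilde\varphi_j\varphi_j$ for all $j\geqslant -1$, hence
\[
\Delta_j f = \widetilde\Delta_j\Delta_j f = \widetilde h_j * \Delta_j f.
\]
Differentiating under the convolution, $\nabla^k \Delta_j f = (\nabla^k \widetilde h_j) * \Delta_j f$, so Young's inequality (valid for $\mathcal B$-valued functions by Minkowski) gives $\|\nabla^k\Delta_j f\|_0 \leqslant \|\nabla^k \widetilde h_j\|_{L^1}\|\Delta_j f\|_0$. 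The scaling $\widetilde h_j(x) = 2^{jn}\widetilde h_0(2^j x)$ for $j\geqslant 0$ yields $\|\nabla^k \widetilde h_j\|_{L^1} = 2^{jk}\|\nabla^k\widetilde h_0\|_{L^1}$, and the $j=-1$ term is trivially bounded. This gives \eqref{B1}.

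For the first inequality in \eqref{Ch} of part (2), I would treat $j=-1$ separately via $\|\Delta_{-1}f\|_0 \leqslant \|h_{-1}\|_{L^1}\|f\|_0 \leqslant C\|f\|_{C^\alpha}$. For $j\geqslant 0$, the key observation is $\int h_j(y)\,\mathrm{d} y = \varphi_j(0) = 0$, so
\[
\Delta_j f(x) = \int_{{\mathbb R}^n} h_j(y)\bigl(f(x-y)-f(x)\bigr)\,\mathrm{d} y,
\]
and bounding the integrand pointwise by $|y|^\alpha [f]_\alpha$ together with the scaling $\int |y|^\alpha |h_j(y)|\,\mathrm{d} y = 2^{-j\alpha}\int |y|^\alpha |h_0(y)|\,\mathrm{d} y$ gives $\|\Delta_j f\|_0 \leqslant C 2^{-j\alpha}\|f\|_{C^\alpha}$.

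For the reverse inequality, set $M:=\sup_{j\geqslant -1} 2^{j\alpha}\|\Delta_j f\|_0$. From \eqref{EE1} one has $f = \sum_{j\geqslant -1}\Delta_j f$ with convergence in ${\mathscr S}'$; the sum converges in $L^\infty$ since $\sum_j 2^{-j\alpha}<\infty$, giving $\|f\|_0 \leqslant CM$. For the Hölder seminorm, given $x\neq y$ choose $J$ so that $2^{-J}\leqslant |x-y|< 2^{-J+1}$, split
\[
f(x)-f(y) = \sum_{j\leqslant J}\bigl(\Delta_j f(x)-\Delta_j f(y)\bigr) + \sum_{j>J}\bigl(\Delta_j f(x)-\Delta_j f(y)\bigr),
\]
and estimate the low-frequency part by the mean value theorem together with \eqref{B1} ($\|\nabla \Delta_j f\|_0 \leqslant C 2^{j(1-\alpha)}M$) and the high-frequency part by $2\|\Delta_j f\|_0 \leqslant 2\cdot 2^{-j\alpha}M$. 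Summing the two geometric series (both controlled by $|x-y|^\alpha M$ up to a constant depending only on $\alpha,n$) yields $[f]_\alpha \leqslant CM$.

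The only mild obstacle is that the setting is Banach-valued, but this affects nothing beyond replacing $|\cdot|$ with $|\cdot|_{\mathcal B}$ throughout, since all inequalities used (Young, Minkowski, triangle) are valid for Bochner integrals.
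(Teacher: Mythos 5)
Your proposal is correct and follows essentially the same route as the paper's proof: bound $\|\Delta_j f\|_0$ for $j\geqslant 0$ using the zero integral of $h_j$ and H\"older scaling, and prove the reverse direction by the standard low/high frequency split at $J\approx -\log_2|x-y|$, using Bernstein on the low modes and the triangle inequality on the high modes. The one genuine difference is that you actually prove part (1), whereas the paper invokes it implicitly inside the chain of inequalities and otherwise defers to \cite{bahouri2011fourier}; your disjoint-support/Young's-inequality argument is the right one and makes the lemma self-contained. One small imprecision: your parenthetical claim that $\widetilde\varphi_{-1}\equiv 1$ on $\mathrm{supp}\,\varphi_{-1}$ is false with the paper's construction -- $\varphi_{-1}=\chi(2\cdot)$ is supported in a ball around $0$ while $\widetilde\varphi$ vanishes near the origin -- so the identity $\Delta_{-1}f=\widetilde\Delta_{-1}\Delta_{-1}f$ does not hold verbatim; you already flag that $j=-1$ is treated separately (e.g.\ replace $\widetilde\varphi_{-1}$ by a bump $\equiv 1$ on a ball containing $\mathrm{supp}\,\chi(2\cdot)$), so this is cosmetic, but the sentence as written should be corrected rather than left as a general claim for all $j\geqslant -1$.
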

One can find the proof of above lemma in \cite{bahouri2011fourier} for ${\mathcal B}={\mathbb R}$.  We present its Banach-valued version below for the reader's convenience.
\begin{proof}
For any $j=0,1,2,\cdots$, we have $\int_{{\mathbb R}^n} h_j(z) \mathrm{d} z= \varphi_j(0)=0$, so 
\begin{align*}
\|\Delta_j f(x)\|_{{\mathcal B}} = &\left\|\int_{{\mathbb R}^n} h_j(x-y) [f(y)-f(x)] \mathrm{d} y \right\|_{{\mathcal B}} \\
=& \left\|\int_{{\mathbb R}^n} 2^{jn}h(2^j(x-y)) [f(y)-f(x)] \mathrm{d} y \right\|_{{\mathcal B}} \\
\leqslant& C\|f\|_{C^\alpha} \int_{{\mathbb R}^n} 2^{jn} h(2^j z)|z|^\alpha \mathrm{d} z  =C2^{-j\alpha} \|f\|_{C^\alpha}, 
\end{align*}
which implies 
$$
\sup_{j\geqslant -1} 2^{j\alpha} \|\Delta_j f\|_0 \leqslant C_\alpha\|f\|_{C^\alpha}. 
$$
On the other hand, 
\begin{align*}
&\Big\|f(x)-\sum_{j=-1}^k \Delta_j f(x)\Big\|_{{\mathcal B}} = \Big\| \int_{{\mathbb R}^n} {\mathscr F}^{-1} (\chi(2^{-k}\cdot)) (y) [f(x)-f(x-y)]\mathrm{d} y\Big\|_{{\mathcal B}}\\
=& \Big\|\int_{B_{2^k\varepsilon}} {\mathscr F}^{-1}(\chi) (z)[f(x)-f(x-2^{-k} z)] \mathrm{d} z  \Big\|_{{\mathcal B}}\\
&+\Big\|\int_{B^c_{2^k\varepsilon}} {\mathscr F}^{-1}(\chi) (z)[f(x)-f(x-2^{-k} z)] \mathrm{d} z  \Big\|_{{\mathcal B}}\\
\leqslant &  \mathrm{osc}_{B_\varepsilon(x)}f \cdot \int_{{\mathbb R}^n} |{\mathscr F}^{-1}(\chi) (y)| \mathrm{d} y+ 2 \|f\|_0 \int_{B_{2^k\varepsilon}^c} |{\mathscr F}^{-1}(\chi)(y)| \mathrm{d} y.  
\end{align*}
Let $k\to \infty$ and then $\varepsilon\to 0$, we obtain that for each $f\in C_b({\mathbb R}^n; {\mathcal B})$ and $x\in {\mathbb R}^n$, 
$f(x)=\sum_{j\geqslant -1} \Delta_j f(x)$. Thus, for any $K> 0$, 
\begin{align*}
|f(x)-f(y)|_{{\mathcal B}} \leqslant & \sum_{j\geqslant -1} |\Delta_j f(x)-\Delta_j f(y)|_{{\mathcal B}} \leqslant |x-y|\sum_{-1\leqslant j\leqslant K}\|\nabla \Delta_j f\|_0+ 2\sum_{j>K} \|\Delta_j f\|_0\\
\leqslant& C_\alpha (|x-y| 2^{(1-\alpha)K} + C2^{-\alpha K} )\sup_{j\geqslant -1} 2^{\alpha j}\|\Delta_j f\|_{0}. 
\end{align*}
For any $|x-y|<1$, by choosing  $K=-\log_2(|x-y|)$, we obtain 
$$
|f(x)-f(y)|_{{\mathcal B}}\leqslant C_\alpha |x-y|^\alpha \sup_{j\geqslant -1} 2^{\alpha j}\|\Delta_j f\|_{0}. 
$$
So we complete our proof. 

\end{proof}
Suppose $f:{\mathbb R}^n\to {\mathbb R}^n$ is a continuous homeomorphism on ${\mathbb R}^n$, its inverse map is denoted by $f^{-1}$. Our next auxiliary lemma is used in the proof of Theorem \ref{Th-SDE-Main}. 
\begin{lemma}\label{Le-measurability}
Suppose $(S; {\mathcal S})$ is a measurable space, $F: (S\times {\mathbb R}^n; {\mathcal S}\times {\mathscr B})\to ({\mathbb R}^n; {\mathscr B})$. 
\begin{enumerate}
\item 
Assume $X$ is another measurable map from $(S; {\mathcal S})$ to $({\mathbb R}^n; {\mathscr B})$.  Then the map $a\mapsto F(a, X(a))$ is measurable from $(S; {\mathcal S})$ to $({\mathbb R}^n; {\mathscr B})$. 

\item 
For any $L>0$ define  
\begin{align*}
H_L:=\big\{f: &{\mathbb R}^n\to {\mathbb R}^n| f\ \mbox{is a continuous homeomorphism and}\\
&L^{-1}|x-y|\leqslant |f(x)-f(y)|\leqslant L|x-y|\big\}. 
\end{align*}
If $F: (S\times {\mathbb R}^n; {\mathcal S}\times {\mathscr B})\to ({\mathbb R}^n; {\mathscr B})$ and for each $a\in S$, $F(a,\cdot)\in H_L$, then 
 the map 
$$
F^{-1}: S\times {\mathbb R}^n\ni(a,x)\mapsto [F^{-1}(a,\cdot)](x) \in {\mathbb R}^n
$$ 
is ${\mathcal S}\times {\mathscr B}/ {\mathscr B}$ measurable. 
\end{enumerate}
\end{lemma}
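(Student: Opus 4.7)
The plan is to treat the two parts separately. For \textbf{Part (1)}, I would use a direct composition argument: the map $\Psi:S\to S\times{\mathbb R}^n$, $\Psi(a):=(a,X(a))$, is ${\mathcal S}/{\mathcal S}\times{\mathscr B}$-measurable because on generating rectangles one has $\Psi^{-1}(A\times B)=A\cap X^{-1}(B)\in{\mathcal S}$. Then $a\mapsto F(a,X(a))=F\circ\Psi(a)$ is measurable as a composition of two measurable maps.

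For \textbf{Part (2)}, the idea is to realise $F^{-1}$ as a pointwise limit of measurable functions built from a countable dense set of candidate preimages. Fix an enumeration $\{q_m\}_{m\geqslant 1}$ of ${\mathbb Q}^n$, and for each integer $k\geqslant 1$ set
\[
M_k(a,x):=\min\Bigl\{1\leqslant m\leqslant k:\ |F(a,q_m)-x|\leqslant |F(a,q_j)-x|\ \text{for all}\ 1\leqslant j\leqslant k\Bigr\},
\]
and $y_k(a,x):=q_{M_k(a,x)}$. For each fixed $q\in{\mathbb Q}^n$ the map $(a,x)\mapsto |F(a,q)-x|$ is ${\mathcal S}\times{\mathscr B}$-measurable by the joint measurability of $F$, so each level set $\{M_k=m\}$ is a finite intersection of sets of the form $\{|F(a,q_m)-x|<|F(a,q_j)-x|\}$ and $\{|F(a,q_m)-x|\leqslant |F(a,q_j)-x|\}$, hence measurable. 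Therefore $y_k$ is ${\mathcal S}\times{\mathscr B}/{\mathscr B}$-measurable.

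To finish I would show $y_k(a,x)\to F^{-1}(a,x)$ pointwise. Since $F(a,\cdot)\in H_L$ is a homeomorphism of ${\mathbb R}^n$, the set $\{F(a,q_m):m\geqslant 1\}$ is dense in ${\mathbb R}^n$, so $\varepsilon_k(a,x):=\min_{1\leqslant m\leqslant k}|F(a,q_m)-x|\downarrow 0$ as $k\to\infty$. Applying the lower bi-Lipschitz bound to $y=y_k(a,x)$ and $y^\ast:=F^{-1}(a,x)$ gives
\[
L^{-1}|y_k(a,x)-F^{-1}(a,x)|\leqslant |F(a,y_k(a,x))-x|=\varepsilon_k(a,x)\longrightarrow 0,
\]
so $F^{-1}$ is a pointwise limit of the measurable maps $y_k$, and therefore ${\mathcal S}\times{\mathscr B}/{\mathscr B}$-measurable. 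The only mildly delicate point is verifying measurability of the discrete argmin $M_k$, and this reduces purely to joint measurability of $(a,x)\mapsto|F(a,q)-x|$ for each fixed rational $q$; once this is in hand, the rest is routine.
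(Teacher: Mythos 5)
Your proof of part (1) matches the paper's (both go through the measurable map $a\mapsto(a,X(a))$). For part (2) you take a genuinely different route. The paper introduces the metric $d(f,g):=\sup_{x}|f(x)-g(x)|/(1+|x|)$ on $H_L$, shows that the induced map $\overline F\colon S\to (H_L,d)$ is measurable, proves that the inversion map $\mathrm{Inv}\colon H_L\to H_L$ is $d$-continuous, and obtains $\overline F^{-1}=\mathrm{Inv}\circ\overline F$ as a composition of a measurable map with a continuous one. You instead construct $F^{-1}$ directly as a pointwise limit of the measurable step maps $y_k(a,x)=q_{M_k(a,x)}$, using density of $F(a,\mathbb Q^n)$ (which follows from continuity and surjectivity of $F(a,\cdot)$) together with the lower bi-Lipschitz bound $L^{-1}|y_k(a,x)-F^{-1}(a,x)|\leqslant|F(a,y_k(a,x))-x|=\varepsilon_k(a,x)\to0$. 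Your argument is correct: each $\{M_k=m\}$ is a finite Boolean combination of sets $\{|F(a,q_i)-x|\leqslant|F(a,q_j)-x|\}$, which are jointly measurable because $(a,x)\mapsto|F(a,q)-x|$ is for each fixed rational $q$. The paper's abstract route is cleaner if one wants to conclude measurability of other continuous functionals of $\overline F$ at the same time; your discretization argument is more elementary and self-contained, avoiding both the choice of a metric on $H_L$ and the verification that inversion is continuous on that metric space.
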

\begin{proof}
(1) This conclusion is trivial since the map $a\mapsto (a, X(a))$ is ${\mathcal S}/ {\mathcal S}\times {\mathscr B}$ measurable. 

\noindent(2). 
 Define 
$$
d(f,g):= \sup_{x\in {\mathbb R}^n} \frac{|f(x)-g(x)|}{1+|x|}, \quad \forall f, g\in H_L. 
$$
It is easy to verify that $H_L$ is a metric space equipped with metric $d$. For any $f\in H_L$ and $\varepsilon>0$, by the continuity of $x\mapsto F(a, x)$, we get 
\begin{align*}
&\{a: d( F(a,\cdot), f)<\varepsilon \}\\
=&\bigcap_{\substack{q\in {\mathbb Q}^n; \\r\in{\mathbb Q}\cap[0,1)}} \left\{a: \frac{|F(a,q)-f(q)|}{ 1+|q|}<r\varepsilon\right\}\in {\mathcal S}. 
\end{align*}
So the map $\overline F: (S, {\mathcal S})\to (H_L, {\mathscr B}(H_L;d))$ is measurable.  Obviously, the map 
$$\mathrm{Inv}: H_L\ni f\mapsto f^{-1}\in H_L,  
$$ 
is well-defined. Now assume $d(f_n,f)\to 0$. Given $x\in {\mathbb R}^n$, assume $y=f^{-1}(x)$, then 
\begin{align*}
|f_n^{-1}(x)-f^{-1}(x)|=& |f^{-1}_n\circ f(y)-f_n^{-1}\circ f_n(y)|\\
\leqslant & L|f(y)-f_n(y)| \leqslant  L (1+|y|) d(f_n, f). 
\end{align*}
By definition of $H_L$, 
$$
|f(y)-f(0)|\geqslant L^{-1} |y|, 
$$
which implies 
$$
|x|=|f(y)|\geqslant L^{-1} |y|-|f(0)|.  
$$
So
\begin{align*}
|f_n^{-1}(x)-f^{-1}(x)|\leqslant& L (1+ Lf(0)+L|x|) d(f_n, f)\\
\leqslant & C_{f, L} (1+|x|) d(f_n,f), 
\end{align*}
which implies $d (f_n^{-1},  f^{-1})\leqslant C_{f, L} d(f_n,f)\to 0$. Thus, the map $\mathrm{Inv}: H_L\to H_L$ is continuous. Hence, the map  $\overline F^{-1}: =\mathrm{Inv}\circ \overline F$ from $(S, {\mathcal S})$ to $(H_L, {\mathscr B}(H_L))$ is also measurable. As a consequence, the map 
$$
F^{-1}: S\times {\mathbb R}^n\ni(a,x)\mapsto [\mathrm{Inv}\circ F(a,\cdot)](x) \in {\mathbb R}^n
$$ 
is ${\mathcal S}\times {\mathscr B}/ {\mathscr B}$ measurable. 
\end{proof}
Roughly speaking, the above lemma shows that if $(a,x)\mapsto F(a,x)$ is measurable then $(a,x)\mapsto F^{-1}(a,\cdot)(x)$ is also measurable. 

\medskip

The following interpolation lemma is used several times in our paper. 
\begin{lemma}\label{Le-Inter}
Let $0\leqslant\gamma_0<\gamma_1<\gamma_2$ with $\gamma_1\notin{\mathbb N}$ and 
$\theta:=(\gamma_2-\gamma_1)/(\gamma_2-\gamma_0)\in (0,1)$,   $Q_T={\mathbb R}^n\times [0,T]$ and  ${\mathcal B}$ be a Banach space. Then there is a constant $C>0$, such that for all $f\in C^{\gamma_2}_{x,t}$ with $\partial_t f\in C^{\gamma_0}_{x,t}$,
\begin{align}\label{EX1}
\|f_{t_1}-f_{t_2}\|_{C^{\gamma_1}}\leqslant C |t_1-t_2|^\theta
\|\partial_t f\|^\theta_{C^{\gamma_0}_{x,t}} \|f\|^{1-\theta}_{C^{\gamma_2}_{x,t}}.
\end{align}
\end{lemma}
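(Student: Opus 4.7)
The plan is to reduce the claimed interpolation inequality to the dyadic/Littlewood--Paley characterisation of Hölder norms provided by Lemma~\ref{Le-Chart-Holder}, then balance two competing dyadic estimates on the increment $g:=f_{t_1}-f_{t_2}$. Since $\gamma_1\notin\mathbb{N}$, the second inequality in \eqref{Ch} gives
\[
\|g\|_{C^{\gamma_1}}\ \leqslant\ C\sup_{j\geqslant -1} 2^{j\gamma_1}\|\Delta_j g\|_0,
\]
so it suffices to bound $2^{j\gamma_1}\|\Delta_jg\|_0$ by $CA^{1-\theta}B^\theta$, where I set $A:=\|f\|_{C^{\gamma_2}_{x,t}}$ and $B:=|t_1-t_2|\,\|\partial_t f\|_{C^{\gamma_0}_{x,t}}$.

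First I would extend the one-sided dyadic bound from Lemma~\ref{Le-Chart-Holder} to all $\gamma\geqslant 0$, namely
\[
\|\Delta_j h\|_0\ \leqslant\ C\,2^{-j\gamma}\,\|h\|_{C^\gamma},\qquad j\geqslant -1,
\]
for any $\mathcal{B}$-valued $h\in C^\gamma$. This is the standard moment-cancellation argument: for $j\geqslant 0$ one has $\int h_j=\varphi_j(0)=0$ (and in fact all moments of $h_j$ up to any order vanish because $\varphi_j$ is supported away from the origin), so a Taylor expansion of $h$ up to order $\lceil\gamma\rceil$ and the rescaling $h_j(z)=2^{jn}h(2^jz)$ give the claimed bound; the case $j=-1$ is trivial. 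This estimate is linear in $h$ and its proof is identical in the Banach-valued setting.

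Applied to $g$ in two different ways, namely directly as $g=f_{t_1}-f_{t_2}$ and through the representation $g=\int_{t_2}^{t_1}\partial_t f_s\,\mathrm{d}s$ combined with Minkowski, this yields
\[
\|\Delta_j g\|_0\ \leqslant\ C\,\min\!\bigl(2^{-j\gamma_2}A,\ 2^{-j\gamma_0}B\bigr).
\]
Multiplying by $2^{j\gamma_1}$ produces a function of $j$ that grows like $2^{j(\gamma_1-\gamma_0)}B$ for small $j$ and decays like $2^{j(\gamma_1-\gamma_2)}A$ for large $j$, since $\gamma_0<\gamma_1<\gamma_2$. The two expressions coincide at $j^\ast=(\gamma_2-\gamma_0)^{-1}\log_2(A/B)$, where a direct computation gives
\[
2^{j^\ast\gamma_1}\cdot 2^{-j^\ast\gamma_2}A\ =\ A^{(\gamma_1-\gamma_0)/(\gamma_2-\gamma_0)}\,B^{(\gamma_2-\gamma_1)/(\gamma_2-\gamma_0)}\ =\ A^{1-\theta}B^\theta.
\]
Hence $\sup_{j\geqslant -1}2^{j\gamma_1}\|\Delta_jg\|_0\leqslant CA^{1-\theta}B^\theta$, which is \eqref{EX1}.

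The only subtlety is the edge case $j^\ast<-1$, which corresponds to $B\geqslant 2^{\gamma_2-\gamma_0}A$; there the supremum is attained at $j=-1$ and produces a constant multiple of $A$, but in this regime $A\leqslant CB$ forces $A\leqslant CA^{1-\theta}B^\theta$, so the same bound holds. I expect no serious obstacle: the only care needed is to verify the Bernstein-type estimate for integer $\gamma$ (not covered by \eqref{Ch}) and to handle the $j^\ast<-1$ regime, both of which are routine.
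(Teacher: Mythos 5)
Your argument is correct, but it takes a genuinely different route from the paper's proof. The paper invokes the spatial H\"older interpolation inequality $\|g\|_{C^{\gamma_1}}\leqslant C\|g\|_{C^{\gamma_0}}^\theta\|g\|_{C^{\gamma_2}}^{1-\theta}$ as a known fact (stated without proof), then combines it with the two elementary bounds $\|f_{t_1}-f_{t_2}\|_{C^{\gamma_0}}\leqslant |t_1-t_2|\,\|\partial_t f\|_{C^{\gamma_0}_{x,t}}$ and $\|f_{t_1}-f_{t_2}\|_{C^{\gamma_2}}\leqslant 2\|f\|_{C^{\gamma_2}_{x,t}}$, packaged via a Garsia--Rademich--Rumsey step (which in fact is not strictly needed, since the direct substitution already gives \eqref{EX1}). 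You instead prove everything from scratch at the Littlewood--Paley level: you establish the two dyadic bounds $\|\Delta_j g\|_0\lesssim 2^{-j\gamma_2}A$ and $\|\Delta_j g\|_0\lesssim 2^{-j\gamma_0}B$, multiply by $2^{j\gamma_1}$, and balance the two competing geometric sequences at the crossover frequency $j^\ast$. This is the standard real-interpolation-through-Besov argument; it is self-contained, matches the LP machinery already set up in the paper, and in effect reproves the spatial interpolation inequality that the paper quotes. The only caveat — and you largely flag it — is that Lemma~\ref{Le-Chart-Holder} as stated covers only $\alpha\in(0,1)$, while your argument needs both directions of \eqref{Ch} for arbitrary non-integer $\gamma_1$ (e.g., $\gamma_1\in(1,2)$ in the paper's application) and the forward bound $\|\Delta_j h\|_0\lesssim 2^{-j\gamma}\|h\|_{C^\gamma}$ for $\gamma=\gamma_2>1$. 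You explicitly mention extending the forward (Bernstein-type) bound; you should also explicitly record that the reverse direction $\|g\|_{C^{\gamma_1}}\lesssim\sup_j 2^{j\gamma_1}\|\Delta_jg\|_0$ must be extended to $\gamma_1>1$ non-integer — it is a routine Taylor/summation argument, but it is an additional unstated extension of the cited lemma. The $j^\ast<-1$ edge case is handled correctly.
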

\begin{proof}
	First of all, for any $t\in [0,1]$, we have 
	\begin{align*}
	\|f_t\|_{C^{\gamma_1}}\leqslant C \|f_t\|_{C^{\gamma_0}}^\theta\|f_t\|_{C^{\gamma_2}}^{1-\theta}.
	\end{align*}
	For any $0\leqslant t_0<t_1\leqslant T$, $\beta\in(0,\theta)$ and $q>1/\theta$, by Garsia-Rademich-Rumsey's inequality, we have
	\begin{align*}
	\frac{\|f_{t_1}-f_{t_0}\|_{C^{\gamma_1}}^q}{|t_1-t_0|^{\beta q-1}}\leqslant&  C \int^{t_1}_{t_0}\!\!\int^{t_1}_{t_0}
	\frac{\|f_t-f_s\|^q_{C^{\gamma_1}}}{|t-s|^{1+\beta q}}\mathrm{d} s \mathrm{d} t\\
	\leqslant& C \int^{t_1}_{t_0}\!\!\int^{t_1}_{t_0} 
	\|f_t-f_s\|^{\theta q}_{C^{\gamma_0}}
	\|f_t-f_s\|^{(1-\theta)q}_{C^{\gamma_2}}|t-s|^{-1-\beta q}\mathrm{d} s\mathrm{d} t\\
	\leqslant& C\left(\int^{t_1}_{t_0}\!\!\int^{t_1}_{t_0}\frac{|t-s|^{\theta q}}{|t-s|^{1+\beta q}}
	\mathrm{d} s\mathrm{d} t\right)\|\partial_t f\|^{\theta q}_{C^{\gamma_0}_{x,t}}
	\|u\|^{(1-\theta) q}_{C^{\gamma_2}_{x,t}}\\
	=&C  |t_1-t_0|^{\theta q-\beta q+1} \|\partial_t f\|^{\theta q}_{C^{\gamma_0}_{x,t}}
	\|u\|^{(1-\theta) q}_{C^{\gamma_2}_{x,t}}, 
	\end{align*}
	which gives \eqref{EX1}. 
\end{proof}

\begin{lemma}\label{Le-con-norm}
Suppose $\beta\geqslant 0$,  ${\mathcal H}$ is a real Hilbert space and  $C^\beta_x=C^\beta({\mathbb R}^n; L^p(\Omega;{\mathcal H}))$. Assume ${\mathscr G}$ is a subalgebra of ${\mathscr F}$, then 
\begin{equation}\label{Eq-Enorm}
\|{\mathbf E} (X| {{\mathscr G}} )\|_{C^{\beta}_x}\leqslant \| X\|_{C^{\beta}_x}. 
\end{equation}
Moreover, for any $k\in {\mathbb N}$ with $k\leqslant \beta$, 
\begin{equation}\label{Eq-DE}
\nabla^k {\mathbf E}\left(X(x)|{\mathscr G}\right )= {\mathbf E}\left( \nabla^k  X(x)|{\mathscr G}\right ). 
\end{equation}
\end{lemma}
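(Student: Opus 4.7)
The plan is to derive both claims from the single fundamental property that conditional expectation is a contraction on $L^p(\Omega;{\mathcal H})$, and therefore commutes with every $L^p$-strong limit. Throughout, let $Y_t := {\mathbf E}(X(x)\mid{\mathscr G})$ and note that by conditional Jensen's inequality applied to the convex function $z\mapsto |z|_{\mathcal H}^p$ on ${\mathcal H}$,
$$
\big|{\mathbf E}(X(x)\mid{\mathscr G})\big|_{\mathcal H}^p\leqslant {\mathbf E}\big(|X(x)|_{\mathcal H}^p\bigm|{\mathscr G}\big),
$$
so taking expectation and a $p$-th root yields $\|{\mathbf E}(X(x)\mid{\mathscr G})\|_{L^p(\Omega;{\mathcal H})}\leqslant \|X(x)\|_{L^p(\Omega;{\mathcal H})}$. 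Since ${\mathbf E}(\cdot\mid{\mathscr G})$ is linear, the identical estimate applied to $X(x)-X(y)$ controls the H\"older seminorm, giving \eqref{Eq-Enorm} in the case $\beta\in[0,1]$.

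For the derivative identity \eqref{Eq-DE}, I would first treat $k=1$. By definition of the strong derivative, for each $h\in{\mathbb R}^n$,
$$
\varepsilon^{-1}\bigl(X(x+\varepsilon h)-X(x)\bigr)\longrightarrow \nabla X(x)\cdot h\quad\text{in }L^p(\Omega;{\mathcal H})\text{ as }\varepsilon\to 0.
$$
Because ${\mathbf E}(\cdot\mid{\mathscr G})\colon L^p(\Omega;{\mathcal H})\to L^p(\Omega;{\mathcal H})$ is a bounded linear operator (contraction, as above), it commutes with this strong limit:
$$
\lim_{\varepsilon\to 0}\varepsilon^{-1}\bigl({\mathbf E}(X(x+\varepsilon h)\mid{\mathscr G})-{\mathbf E}(X(x)\mid{\mathscr G})\bigr)={\mathbf E}\bigl(\nabla X(x)\cdot h\bigm|{\mathscr G}\bigr)
$$
in $L^p(\Omega;{\mathcal H})$. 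This exhibits ${\mathbf E}(\nabla X(x)\mid{\mathscr G})$ as the strong derivative of $x\mapsto{\mathbf E}(X(x)\mid{\mathscr G})$, proving the $k=1$ case. Then I iterate: if the identity holds at level $k-1$, apply the $k=1$ step to the map $x\mapsto \nabla^{k-1}X(x)$, which lies in $C^{\beta-(k-1)}({\mathbb R}^n;L^p(\Omega;\mathcal L({\mathbb R}^n)^{\otimes(k-1)}\otimes{\mathcal H}))$, to get $\nabla^k{\mathbf E}(X(x)\mid{\mathscr G})={\mathbf E}(\nabla^k X(x)\mid{\mathscr G})$.

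Finally, for general $\beta = k+\alpha$ with $k\in{\mathbb N}$ and $\alpha\in[0,1]$, combine the two ingredients: by \eqref{Eq-DE},
$$
\nabla^k{\mathbf E}(X(x)\mid{\mathscr G})-\nabla^k{\mathbf E}(X(y)\mid{\mathscr G})={\mathbf E}\bigl(\nabla^k X(x)-\nabla^k X(y)\bigm|{\mathscr G}\bigr),
$$
and the conditional Jensen contraction gives
$$
\bigl\|\nabla^k{\mathbf E}(X(x)\mid{\mathscr G})-\nabla^k{\mathbf E}(X(y)\mid{\mathscr G})\bigr\|_{L^p(\Omega;{\mathcal H})}\leqslant\bigl\|\nabla^k X(x)-\nabla^k X(y)\bigr\|_{L^p(\Omega;{\mathcal H})}.
$$
Dividing by $|x-y|^\alpha$, taking suprema, and summing with the contraction of $\|\nabla^i X\|_0$ for $i\leqslant k$ establishes \eqref{Eq-Enorm} in full generality. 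No real obstacle is expected here; the single point that deserves care is verifying that strong-derivative limits genuinely commute with ${\mathbf E}(\cdot\mid{\mathscr G})$, which is immediate from its continuity on $L^p(\Omega;{\mathcal H})$.
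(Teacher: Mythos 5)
Your proposal takes essentially the same route as the paper's proof: conditional Jensen's inequality makes ${\mathbf E}(\cdot\mid{\mathscr G})$ a contraction on $L^p(\Omega;{\mathcal H})$, which handles the H\"older estimates, and the boundedness of this linear operator lets it pass through spatial differentiation, giving \eqref{Eq-DE}.

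One small imprecision: the paper's notion of derivative is the strong (Fr\'echet) one, defined by $\lim_{|h|\to 0}|g(x+h)-g(x)-\nabla g(x)\cdot h|_{{\mathcal B}}=0$, whereas you argue via directional difference quotients $\varepsilon^{-1}(X(x+\varepsilon h)-X(x))$; showing that these directional limits exist and are linear in $h$ does not by itself give Fr\'echet differentiability of $x\mapsto{\mathbf E}(X(x)\mid{\mathscr G})$. The repair is immediate and lands exactly on the paper's computation: apply the contractivity directly to the Fr\'echet remainder,
\begin{align*}
\bigl\|{\mathbf E}^{\mathscr G}X(x+h)-{\mathbf E}^{\mathscr G}X(x)-[{\mathbf E}^{\mathscr G}\nabla X(x)]\cdot h\bigr\|_{{\mathcal B}}
\leqslant \bigl\|X(x+h)-X(x)-\nabla X(x)\cdot h\bigr\|_{{\mathcal B}},
\end{align*}
and let $|h|\to 0$. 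With that change, the $k=1$ step is correct and the induction for general $k$ goes through as you describe.
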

\begin{proof}
We only prove \eqref{Eq-Enorm} when $\beta\in (0,1)$. Denote $ {\mathbf E}^{\mathscr G}  X(\cdot):= {\mathbf E} (X(\cdot)| {{\mathscr G}} )$, by Jensen's inequality, 
\begin{align*}
&{\mathbf E} \left|{\mathbf E}^{\mathscr G}  X(x)-{\mathbf E}^{\mathscr G}  X(y)\right|_{{\mathcal H}}^p
\leqslant {\mathbf E} [{\mathbf E}^{\mathscr G}  |X(x)-X(y)|_{{\mathcal H}}]^p \\
\leqslant& {\mathbf E} [{\mathbf E}^{\mathscr G}  \left|X(x)-X(y)\right|_{{\mathcal H}}^p] = {\mathbf E}   \left|X(x)-X(y)\right|_{{\mathcal H}}^p \leqslant |x-y|^{\beta p} \|X\|_{C^\beta_x}^p, 
\end{align*}
which yields 
$$
\|{\mathbf E}^{\mathscr G} X\|_{C^\beta_x}=\sup_{x, y\in {\mathbb R}^d}\frac{[{\mathbf E} |{\mathbf E}^{\mathscr G}  X(x)-{\mathbf E}^{\mathscr G}  X(y)|_{{\mathcal H}}^p]^{1/p}}{|x-y|^\beta} \leqslant \|X\|_{C^\beta_x}. 
$$
For \eqref{Eq-DE}, we only give the proof for $k=1$. Again by Jensen's inequality, 
\begin{align*}
&\left|{\mathbf E}^{\mathscr G}  X(x+h)-{\mathbf E}^{\mathscr G} X(x)- [{\mathbf E}^{\mathscr G} \nabla X(x)]\cdot h\right|_{{\mathcal H}}\\
\leqslant & {\mathbf E}^{\mathscr G} \left|X(x+h)- X(x)- \nabla X(x)\cdot h\right|_{\mathcal H}. 
\end{align*}
Thus, 
\begin{align*}
&{\mathbf E} \, \left|{\mathbf E}^{\mathscr G}  X(x+h)-{\mathbf E}^{\mathscr G} X(x)- [{\mathbf E}^{\mathscr G} \nabla X(x)]\cdot h\right|_{{\mathcal H}}^p\\
\leqslant & {\mathbf E}\,  {\mathbf E}^{\mathscr G}  \left( \left|X(x+h)- X(x)- \nabla X(x)\cdot h\right|_{{\mathcal H}}^p \right )\\
=& \left|X(x+h)- X(x)- \nabla X(x)\cdot h\right |_{{\mathcal B}}^p\to 0\  (h\to \infty), 
\end{align*}
which gives the desired result. 
\end{proof}

\begin{lemma}\label{Le-Exchange}
Suppose $f: {\mathbb R}^n \times\Omega\to {\mathbb R}^m$ is ${\mathscr B}\times {\mathscr F}$ measurable and $f\in C^1({\mathbb R}^n; {\mathbb D}^{1, p})$,  then $\nabla f\in C({\mathbb R}^n; {\mathbb D}^{1, p})$ and 
\begin{equation}
\nabla D f =D  \nabla f 
\end{equation}
\end{lemma}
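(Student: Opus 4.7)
The plan is to exploit the fact that the Malliavin derivative $D$ is a bounded (hence closed) linear operator from ${\mathbb D}^{1,p}$ into $L^p(\Omega;H)$, so it commutes with strong limits taken in ${\mathbb D}^{1,p}$. First, I would unfold the hypothesis: by definition of $f \in C^1({\mathbb R}^n;{\mathbb D}^{1,p})$, for each $x$ and each coordinate direction $e_i$ the difference quotient
\begin{equation*}
q_h(x):=\frac{f(x+he_i)-f(x)}{h}
\end{equation*}
converges to $\partial_i f(x)$ in the ${\mathbb D}^{1,p}$-norm as $h\to 0$, and the map $\partial_i f:{\mathbb R}^n\to{\mathbb D}^{1,p}$ is continuous. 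In particular $\partial_i f(x)\in{\mathbb D}^{1,p}$, which already shows that $\nabla f\in C({\mathbb R}^n;{\mathbb D}^{1,p})$, i.e.\ the ``regularity'' half of the conclusion is free from the hypothesis.

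For the commutation identity, I would apply $D$ to both sides. By linearity,
\begin{equation*}
D\,q_h(x)=\frac{Df(x+he_i)-Df(x)}{h}.
\end{equation*}
Since $\|DF\|_{L^p(\Omega;H)}\leqslant\|F\|_{{\mathbb D}^{1,p}}$ for every $F\in{\mathbb D}^{1,p}$, the convergence $q_h(x)\to\partial_i f(x)$ in ${\mathbb D}^{1,p}$ implies $D\,q_h(x)\to D\,\partial_i f(x)$ in $L^p(\Omega;H)$. On the other hand, viewing $Df:{\mathbb R}^n\to L^p(\Omega;H)$ as the composition of the continuous map $f:{\mathbb R}^n\to{\mathbb D}^{1,p}$ with the bounded embedding $D$, the same difference quotient computes $\partial_i Df(x)$ in $L^p(\Omega;H)$. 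Uniqueness of limits then yields
\begin{equation*}
D\,\partial_i f(x)=\partial_i Df(x),\qquad i=1,\dots,n,
\end{equation*}
which is exactly $D\nabla f=\nabla Df$. The continuity of $x\mapsto D\nabla f(x)=\nabla Df(x)$ in $L^p(\Omega;H)$ now follows from the continuity of $\nabla f$ in ${\mathbb D}^{1,p}$ and boundedness of $D$, completing $\nabla f\in C({\mathbb R}^n;{\mathbb D}^{1,p})$.

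There is no real obstacle here; the only point requiring minor care is that one must interpret $C^1({\mathbb R}^n;{\mathbb D}^{1,p})$ in the strong Fr\'echet sense so that the difference quotients converge in the full ${\mathbb D}^{1,p}$-norm (and not merely coordinatewise in $L^p(\Omega)$ and $L^p(\Omega;H)$ separately). Once this is fixed, the argument is essentially a one-line application of the closedness of the Malliavin operator. For vector-valued $f$, the statement is applied componentwise, so no extra work is needed beyond setting $f^j(x)$ in place of $f(x)$.
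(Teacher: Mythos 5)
Your proof is correct and follows essentially the same route as the paper's: differentiate the difference quotient, use convergence in the ${\mathbb D}^{1,p}$-norm, and conclude by passing to the limit under $D$. The only stylistic difference is that you invoke boundedness of $D$ from ${\mathbb D}^{1,p}$ to $L^p(\Omega;H)$ and uniqueness of limits, whereas the paper phrases the same step as an appeal to closability of the Malliavin derivative; these are equivalent given that the difference quotients are assumed to converge in the ${\mathbb D}^{1,p}$-norm, and your presentation is if anything cleaner in making explicit that the ``$\nabla f\in C({\mathbb R}^n;{\mathbb D}^{1,p})$'' part is immediate from the hypothesis.
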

\begin{proof}
We assume $n=m=1$ for simple. For any $x\in {\mathbb R}$, by definition 
$$
\partial_{x, \theta} f(x):=\frac{f(x+\theta)-f(x)}{\theta}\overset{L^p(\Omega)}{\longrightarrow}  \partial_x f(x)\ (\theta\to0). 
$$
On the other hand, since $D f\in C^1({\mathbb R}; L^p(\Omega, H))$, we have 
\begin{align*}
D \partial_{x, \theta} f(x) =\frac{Df(x+\theta)-Df(x)}{\theta} \overset{L^p(\Omega; H)}{\longrightarrow}  \partial_x(D f)(x)\ (\theta\to0). 
\end{align*}
By the closability of Mallivian derivate, we get $ D\partial_xf(x)=\partial_xDf(x)\in {\mathbb D}^{1, p}$ and 
$$
\|\partial_xf(x)\|_{{\mathbb D}^{1, p}}= \liminf_{|\theta|\to0}  \left\|\frac{f(x+\theta)-f(x)}{\theta}\right\|_{{\mathbb D}^{1, p}} \leqslant \|f\|_{C^1({\mathbb R}; {\mathbb D}^{1, p})}. 
$$
\end{proof}

For any $F\in {\mathbb D}^{1,2}$, we have the following remarkable Clark-Ocone formula, 
\begin{equation}\label{Eq-CO-fml}
F={\mathbf E} (F)+\int_{0}^{1} {\mathbf E}^tD_t F\cdot d  W_t
:={\mathbf E} (F)+\sum_{k=1}^{d} \int_{0}^{1} {\mathbf E}\left(D_{t}^{k} F | \mathscr{F}_{t}\right) d  W_t^{k}. 
\end{equation}
\eqref{Eq-CO-fml} implies the following simple lemma. 
\begin{lemma}\label{Le-COF}
Suppose $F\in {\mathbb D}^{1,2}$, then for each $t\in[0,1]$, 
\begin{equation}\label{eq-EtF}
{\mathbf E}^t F= {\mathbf E} F + \int_0^t {\mathbf E}^sD_s F\cdot \mathrm{d}  W_s. 
\end{equation}
\end{lemma}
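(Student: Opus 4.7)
The plan is to deduce this directly from the Clark–Ocone formula \eqref{Eq-CO-fml} by conditioning both sides on $\mathscr{F}_t$ and exploiting the martingale property of the It\^o integral. Since $F \in \mathbb{D}^{1,2}$, the process $s \mapsto \mathbf{E}^s D_s F$ belongs to $L^2([0,1]\times\Omega; \mathbb{R}^d)$ and is progressively measurable, so every stochastic integral appearing below is well-defined in $L^2(\Omega)$.

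First, I would write the Clark–Ocone formula as
\begin{equation*}
F = \mathbf{E} F + \int_0^t \mathbf{E}^s D_s F \cdot \mathrm{d} W_s + \int_t^1 \mathbf{E}^s D_s F \cdot \mathrm{d} W_s,
\end{equation*}
splitting the integral at the fixed time $t \in [0,1]$. Applying $\mathbf{E}^t = \mathbf{E}(\,\cdot\,|\mathscr{F}_t)$ to both sides, the constant $\mathbf{E} F$ is unaffected.

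Next, I would handle the two stochastic integrals separately. The first integral $\int_0^t \mathbf{E}^s D_s F \cdot \mathrm{d} W_s$ is $\mathscr{F}_t$-measurable (being the terminal value of a stochastic integral of an adapted integrand over $[0,t]$), so it is unchanged under $\mathbf{E}^t$. The second integral $\int_t^1 \mathbf{E}^s D_s F \cdot \mathrm{d} W_s$ represents a martingale increment: the process $r \mapsto M_r := \int_0^r \mathbf{E}^s D_s F \cdot \mathrm{d} W_s$ is a square-integrable $\mathscr{F}_r$-martingale, hence $\mathbf{E}^t(M_1 - M_t) = 0$. Combining these two observations yields
\begin{equation*}
\mathbf{E}^t F = \mathbf{E} F + \int_0^t \mathbf{E}^s D_s F \cdot \mathrm{d} W_s,
\end{equation*}
which is exactly \eqref{eq-EtF}.

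There is no real obstacle here: the statement is an essentially immediate corollary of \eqref{Eq-CO-fml}, and the only thing to verify is the integrability of $\mathbf{E}^s D_s F$, which follows from $DF \in L^2([0,1]\times\Omega;\mathbb{R}^d)$ via Jensen's inequality applied fiberwise in $s$.
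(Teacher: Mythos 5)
Your proof is correct and essentially identical to the paper's: both start from the Clark--Ocone formula \eqref{Eq-CO-fml} and invoke the martingale property of the It\^o integral $r\mapsto\int_0^r \mathbf{E}^s D_s F\cdot\mathrm{d} W_s$ to evaluate $\mathbf{E}^t F$. The paper packages this as ``$m_t$ is an $\mathscr{F}_t$-martingale with $m_1=F$, hence $\mathbf{E}^t F=\mathbf{E}^t m_1=m_t$'', whereas you split the integral at $t$ and condition term by term, but the content is the same.
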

\begin{proof}
By Clark-Ocone's formula, 
$$
m_t= {\mathbf E} F + \int_0^t {\mathbf E}^sD_s F \cdot \mathrm{d}  W_s, 
$$
is a ${\mathscr F}_t$-martingale with $m_1=F$.  Thus,  
$$
{\mathbf E}^t F= {\mathbf E}^t m_1=m_t={\mathbf E} F + \int_0^t {\mathbf E}^sD_s F \cdot \mathrm{d}  W_s. 
$$
\end{proof}

The following Lemma is a modification of Theorem 1.1 in \cite{kunita1981some}, which is need in our proof of main result. Similar result on distributional valued processes can be found in  \cite{krylov2011ito}. 
\begin{lemma}[It\^o-Wentzell's formula]\label{Le-Ito}
Let  $(\Omega, {\mathscr F}, {\mathscr F}_t, {\mathbf P})$ be a standard filtered probability space satisfying the common conditions, $p, p'\in [1,\infty]$ with $\frac{1}{p}+\frac{1}{p'}=1$ and $\alpha_1, \alpha_2\in (0,1)$ with $\alpha_1+\alpha_2>1$. Suppose $X_t=(X_t^1,\cdots, X_t^n)$ be continuous semimartingales and  $\phi_t(x)$ be a random field continuous in $(x,t)\in Q$ almost surely. Assume $\phi$ and $X$ satisfy 
\begin{enumerate}
\item for each $t\in [0,1]$, ${\mathbb R}^n\ni x\mapsto \phi_t(x)\in {\mathbb R}$ is $C^2$ continuous a.s., 
\item for each $x\in {\mathbb R}^n$, $t\mapsto \phi_t(x)$ is a continuous ${\mathscr F}_t$-semimartingale represented as 
\begin{equation}\label{eq-phi}
 \phi_t(x)=\phi_0(x)+\int_0^t g_s(x)  \mathrm{d} s+ \int_0^t v^k_s(x) \mathrm{d} m^k_s, 
\end{equation}
\end{enumerate}
where $m^1, \cdots, m^d$ are continuous martingales, and the random field $g, v$ are locally bounded and 
\begin{enumerate}[(a)]
\item for each $x\in {\mathbb R}^n$, $t\mapsto g_t(x)$ and $t\mapsto v_t(x)$ are ${\mathscr F}_t$-adapted processes; 
\item for each $t\in[0,1]$, $x\mapsto v_t(x)$ is $C^1$ a.s.; 
\item for each $t\in [0,1]$, $x\mapsto g_t(x)$ is continuous, 
$$
 {\mathbf E} \sup_{x\in B_N} \left|\nabla \int_{t_1}^{t_2} g_s(x) \mathrm{d} s\right|^p \lesssim_{p, N} |t_1-t_2|^{\alpha_1p},
$$
$$
{\mathbf E}\left|X_{t_1\wedge\tau_N}-X_{t_2\wedge\tau_N}\right|^{p'}\lesssim_{p' N} |t_1-t_2|^{\alpha_2p'}, 
$$
where $\tau_N=\inf\{t>0: |X_t|>N\}$. 
\end{enumerate}
Then we have 
\begin{equation}\label{Eq-IW}
\begin{aligned} 
d \phi_{t}(X_t)=&g_t(X_t) \mathrm{d} t+ v_t^k(X_t) \mathrm{d} m_t^k+ \partial_i \phi_t(X_t) \mathrm{d} X^i_t\\
&+ \frac{1}{2} \partial_{ij} \phi_t(X_t) \mathrm{d} \langle X^i, X^j\rangle _t+ \partial_i v_t^k(X_t)\mathrm{d} \langle m^k, X^i\rangle _t
\end{aligned}
\end{equation}
\end{lemma}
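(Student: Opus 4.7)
The plan is to prove \eqref{Eq-IW} by time-discretisation in the spirit of Kunita \cite{kunita1981some}, with the H\"older-type hypotheses (c) invoked precisely to kill the Riemann-sum error terms. Fix $t\in[0,1]$ and a partition $\pi=\{0=t_0<t_1<\cdots<t_N=t\}$ with mesh $|\pi|$, localising by the stopping time $\tau_N$ so that $X$ stays in $B_N$. The starting point is the telescoping identity
\begin{align*}
\phi_t(X_t)-\phi_0(X_0) = I_\pi + II_\pi,
\end{align*}
with
\begin{align*}
I_\pi = \sum_i\left[\phi_{t_{i+1}}(X_{t_{i+1}})-\phi_{t_{i+1}}(X_{t_i})\right],\qquad II_\pi = \sum_i\left[\phi_{t_{i+1}}(X_{t_i})-\phi_{t_i}(X_{t_i})\right].
\end{align*}

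For $I_\pi$, since $y\mapsto \phi_{t_{i+1}}(y)$ is almost surely $C^2$, I would apply the classical It\^o formula on each subinterval $[t_i,t_{i+1}]$, then substitute $\phi_{t_{i+1}}(X_s)\rightarrow \phi_s(X_s)$ inside the integrals. In the limit $|\pi|\to 0$ this produces the terms $\partial_i\phi_t(X_t)\,\mathrm{d} X_t^i$ and $\tfrac12 \partial_{ij}\phi_t(X_t)\,\mathrm{d}\langle X^i,X^j\rangle_t$ of \eqref{Eq-IW}. For $II_\pi$, substituting the SPDE representation \eqref{eq-phi} at the ${\mathscr F}_{t_i}$-measurable point $X_{t_i}$ gives
\begin{align*}
II_\pi = \sum_i\int_{t_i}^{t_{i+1}} g_s(X_{t_i})\,\mathrm{d} s + \sum_i\int_{t_i}^{t_{i+1}} v_s^k(X_{t_i})\,\mathrm{d} m_s^k,
\end{align*}
whose first sum converges to $\int_0^t g_s(X_s)\,\mathrm{d} s$ by continuity of $g$ and pathwise continuity of $X$.

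The crux, and what I expect to be the main obstacle, is extracting the cross-variation term $\int_0^t\partial_i v_s^k(X_s)\,\mathrm{d}\langle m^k,X^i\rangle_s$ from the remaining martingale sum. I would write
\begin{align*}
v_s^k(X_{t_i}) - v_s^k(X_s) = -\partial_i v_s^k(X_s)\,(X_s^i-X_{t_i}^i) + R_{s,i},
\end{align*}
process the leading part via a stochastic Fubini theorem combined with It\^o's formula applied to $X^i_s-X^i_{t_i}$, and identify the resulting Riemann sum as the prospective cross-variation. Sign bookkeeping must be tracked carefully, since analogous commutators arise in $I_\pi$ when $\phi_{t_{i+1}}$ is replaced by $\phi_s$; the two contributions combine to produce exactly the coefficient in \eqref{Eq-IW}.

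All remaining error terms (Taylor remainders in $v$, time oscillations of $\partial\phi$, and subleading Riemann sums) share a common structure: each summand is a product of an increment of $\phi$, $g$ or $v$ in $t$ with an increment of $X$. Applying H\"older's inequality with conjugate exponents $(p,p')$ to each such summand, and invoking hypothesis (c), bounds the total error by
\begin{align*}
C\sum_i |t_{i+1}-t_i|^{\alpha_1+\alpha_2} \leqslant C\,|\pi|^{\alpha_1+\alpha_2-1}\cdot t,
\end{align*}
which vanishes as $|\pi|\to 0$ thanks to the crucial assumption $\alpha_1+\alpha_2>1$. Removing the localisation by sending $N\to\infty$ completes the proof of \eqref{Eq-IW}.
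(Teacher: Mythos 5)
Your telescoping decomposition $\phi_t(X_t)-\phi_0(X_0)=I_\pi+II_\pi$ matches the paper's, and so does the closing estimate: the product-of-increments error terms are killed by H\"older with exponents $(p,p')$ and the hypothesis $\alpha_1+\alpha_2>1$. But the accounting for the cross-variation term is wrong, and this is precisely the delicate part of the lemma.

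You claim the term $\int_0^t\partial_i v_s^k(X_s)\,\mathrm{d}\langle m^k,X^i\rangle_s$ arises from Taylor-correcting the martingale sum in $II_\pi$, i.e.\ from $\sum_i\int_{t_i}^{t_{i+1}}\bigl[v_s^k(X_{t_i})-v_s^k(X_s)\bigr]\,\mathrm{d} m_s^k$. It does not. That sum is $\int_0^t v_s^k(X_s^n)\,\mathrm{d} m_s^k$ minus $\int_0^t v_s^k(X_s)\,\mathrm{d} m_s^k$, a bona fide It\^o integral against $m^k$ of the adapted, continuous, bounded-on-$B_N$ integrand $v_s^k(X_s^n)-v_s^k(X_s)\to 0$; by dominated convergence for stochastic integrals it tends to $0$ in probability. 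Writing $v_s^k(X_{t_i})-v_s^k(X_s)\approx -\partial_j v_s^k(X_s)(X_s^j-X_{t_i}^j)$ and integrating against $\mathrm{d} m_s^k$ produces a martingale whose quadratic variation is of order $|\pi|$, not a bracket term: the coefficient $(X_s^j-X_{t_i}^j)$ multiplies $\mathrm{d} m_s^k$, not $\mathrm{d} X_s^j$, so no covariation is generated. The cross-variation term actually comes from $I_\pi$, specifically from the \emph{time} oscillation of $\nabla\phi$ inside the spatial increment: $\bigl[\partial_i\phi_{t_{i+1}}(X_{t_i})-\partial_i\phi_{t_i}(X_{t_i})\bigr](X_{t_{i+1}}^i-X_{t_i}^i)$, which after inserting \eqref{eq-phi} becomes, up to the $g$-part (killed by (c)), a sum $\sum_i\bigl(\int_{t_i}^{t_{i+1}}\partial_i v_s^k(X_{t_i})\,\mathrm{d} m_s^k\bigr)(X_{t_{i+1}}^i-X_{t_i}^i)$. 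This is the paper's $I_{2122}^n$: a product of a $\mathrm{d} m^k$-increment with a $\mathrm{d} X^i$-increment, which is exactly what a quadratic covariation is in the limit.

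Your treatment of $I_\pi$ hides exactly this contribution. You ``apply the classical It\^o formula on each subinterval'' to the random $C^2$ function $\phi_{t_{i+1}}(\cdot)$, then ``substitute $\phi_{t_{i+1}}(X_s)\to\phi_s(X_s)$'', and conclude this yields only the two terms $\partial_i\phi_t(X_t)\,\mathrm{d} X_t^i$ and $\tfrac12\partial_{ij}\phi_t(X_t)\,\mathrm{d}\langle X^i,X^j\rangle_t$. First, the integrand $\partial_i\phi_{t_{i+1}}(X_s)$ for $s<t_{i+1}$ is anticipating, so the ``classical It\^o formula'' cannot be applied directly on $[t_i,t_{i+1}]$; the paper sidesteps this precisely by using a deterministic second-order Taylor expansion of $\phi_{t_{l+1}}$ around $X_{t_l}$ rather than It\^o. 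Second, the substitution $\phi_{t_{i+1}}\to\phi_s$ in the first-order term $\int_{t_i}^{t_{i+1}}\partial_i\phi_{t_{i+1}}(X_s)\,\mathrm{d} X_s^i$ is \emph{not} innocuous: the correction $\int_{t_i}^{t_{i+1}}\bigl[\partial_i\phi_{t_{i+1}}(X_s)-\partial_i\phi_s(X_s)\bigr]\,\mathrm{d} X_s^i$, after inserting \eqref{eq-phi} and applying stochastic Fubini, is exactly what generates $\int\partial_i v^k(X_s)\,\mathrm{d}\langle m^k,X^i\rangle_s$. By declaring that the substitution produces only the two standard It\^o terms you have dropped the very term the lemma is about, and then tried to recover it from the wrong place.

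So the skeleton (discretize, telescope, H\"older with $\alpha_1+\alpha_2>1$) is sound, but you should follow the paper's route for $I_\pi$: Taylor-expand $\phi_{t_{l+1}}$ in space to second order with a Lagrange remainder, split the first-order coefficient as $\partial_i\phi_{t_l}(X_{t_l})+\bigl[\partial_i\phi_{t_{l+1}}(X_{t_l})-\partial_i\phi_{t_l}(X_{t_l})\bigr]$, feed \eqref{eq-phi} into the bracket, kill the $g$-part by (c), and read off the covariation from the $v$-part. The martingale part of $II_\pi$ needs no Taylor expansion at all.
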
 
\begin{proof}
The proof is similar with Theorem 1.1 of \cite{kunita1981some}. Without loss of generality, we can assume $|X_t|$ is bounded by a constant $N$. For any $t>0$, let $t_l=l t/n, l=0, \cdots, n$. $s(n):=t[sn/t]/n$ and $X^n_{s}:=X_{s(n)}$. Then, 
\begin{align*}
\phi_t(X_t)-\phi_0(X_0)=& \sum_{l=0}^{n-1} [\phi_{t_{l+1}}(X_{t_l})-\phi_{t_l}(X_{t_l})]+ \sum_{l=0}^{n-1} [\phi_{t_{l+1}}(X_{t_{l+1}})-\phi_{t_{l+1}}(X_{t_l})]\\
=&: I_1^n+I_2^n. 
\end{align*}
By \eqref{eq-phi} and the definition of $X^n$
\begin{align*}
I_1^n = \sum_{l=0}^{n-1} \int_{t_l}^{t_{l+1}} g_s(X_{t_l}) \mathrm{d} s+\sum_{l=0}^{n-1} \int_{t_l}^{t_{l+1}} v^k_s(X_{t_l}) \mathrm{d} m^k_s=\int_0^t g_s(X^n_s) \mathrm{d} s+ \int_0^t v_s^k(X^n_s) \mathrm{d} m_s^k. 
\end{align*}
Since $g_s(X_s^n)\to g_s(X_s)$, $v_s(X_s^n)\to v_s(X_s)$ a.s. and $g, v$ are uniformly bounded in $[0,1]\times B_N$, we obtain 
$$
I_1^n \overset{{\mathbf P}}{\longrightarrow} \int_0^t g_s(X_s) \mathrm{d} s+ \int_0^t v_s^k(X_s) \mathrm{d} m_s^k,\ (n\to \infty). 
$$
By Taylor expansion, 
\begin{align*}
I^n_2=& \sum_{l=0}^{n-1} \partial_i\phi_{t_{l+1}}(X_{t_l})(X^i_{t_{l+1}}-X^i_{t_l}) + \frac{1}{2} \sum_{l=0}^{n-1} \partial_{ij} \phi_{t_{l+1}} (\xi_l) (X^i_{t_{l+1}}-X^i_{t_l}) (X^j_{t_{l+1}}-X^j_{t_l})\\
=&: I_{21}^n+ I_{22}^n, 
\end{align*}
where $\xi_l$ are some random variables between $X_{t_l}$ and $X_{t_{l+1}}$. 
It is standard to show that 
\begin{align*}
I_{22}^n \overset{{\mathbf P}}{\longrightarrow} \frac{1}{2} \int_0^t \partial_{ij}\phi_s(X_s) \mathrm{d} \langle X^i, X^j\rangle _s, \quad (n\to \infty). 
\end{align*}
For $I_{21}^n$, we rewrite it as 
\begin{align*}
I_{21}^n=& \sum_{i=0}^{n-1}\partial_i \phi_{t_l}(X_{t_l})(X^i_{t_{l+1}}-X^i_{t_l})+ \sum_{l=0}^{n-1}  [\partial_i \phi_{t_{l+1}}(X_{t_l})-\partial_i \phi_{t_l}(X_{t_l})] (X^i_{t_{l+1}}-X^i_{t_l})\\
=& : I_{211}^n+ I_{212}^n. 
\end{align*}
Like before, 
\begin{align*}
I^n_{211} \overset{{\mathbf P}}{\longrightarrow} \int_0^t \partial_i \phi_s(X_s) \mathrm{d} X^i_s, \quad (n\to \infty). 
\end{align*}
Again by \eqref{eq-phi}, 
\begin{align*}
I_{212}^n=& \sum_{l=0}^{n-1} \partial_i \left( \int_{t_l}^{t_{l+1}} g_s(X_{t_l}) \mathrm{d} s\right ) (X^i_{t_{l+1}}-X^i_{t_l}) + \sum_{l=0}^{n-1} \left(\int_{t_l}^{t_{l+1}} \partial_i v^k_s(X_{t_l}) \mathrm{d} m^k_s\right ) (X^i_{t_{l+1}}-X^i_{t_l})\\
=&: I^n_{2121}+I^n_{2122}. 
\end{align*}
By our assumption (c) and H\"older inequality, 
\begin{align*}
 {\mathbf E} |I^n_{2121} | \lesssim &\sum_{l=0}^{n-1}   \left[ {\mathbf E} \sup_{x\in B_N} \left|\nabla \int_{t_{l+1}}^{t_l} g_s(x) \mathrm{d} s\right|^p\right ]^{1/p} \left[{\mathbf E} \left|X_{t_{l+1}}-X_{t_l}\right|^{p'}\right ]^{1/p'}  \\
 \lesssim &\sum_{l=0}^{n-1} |t_{l+1}-t_l|^{\alpha_1+\alpha_2} \lesssim n^{-\alpha_1-\alpha_2+1}\to 0, \quad (k\to \infty). 
\end{align*}
It is standard to show 
$$
I_{2122}^n \overset{{\mathbf P}}{\longrightarrow} \int_0^t \partial_i v^k(X_s) \mathrm{d} \langle m^k, X^i\rangle _s, \quad (k\to \infty).
$$
Combine all the above calculations, we obtain \eqref{Eq-IW}. 
\end{proof}


\begin{ack}
The author would like to thank Professor Luo Dejun for raising this problem to him and also for having many useful discussions. 
\end{ack}

\begin{funding}
Research of G. Zhao is supported by the German Research Foundation (DFG) through the Collaborative Research Centre(CRC) 1283 ``Taming uncertainty and profiting from randomness and low regularity in analysis, stochastics and their applications".
\end{funding}


\end{document}